\documentclass[12pt]{amsart}  
\usepackage{amsmath,amssymb,latexsym, amsthm, amscd, mathrsfs, stmaryrd,enumerate}
\usepackage[linktocpage=true]{hyperref}
\usepackage{color}
\usepackage[parfill]{parskip}
\usepackage[all]{xy}
 

\setlength{\hoffset}{0pt}
\setlength{\voffset}{0pt}
\setlength{\topmargin}{0pt}
\setlength{\oddsidemargin}{0in}
\setlength{\evensidemargin}{0in}
\setlength{\textheight}{8.75in}
\setlength{\textwidth}{6.5in}
\pagestyle{headings}

\newtheorem{thm}{Theorem} [section]
\theoremstyle{definition}

\newcommand{\Top}{{\mathrm{Top}}}
\newcommand{\Soc}{{\mathrm{Soc}}}

\newcommand{\tto}{\twoheadrightarrow}
\newtheorem{Question}[thm]{Question}
\newtheorem{example}[thm]{Example}
\newtheorem{rem}[thm]{Remark}
\theoremstyle{plain}
\newtheorem{prop}[thm]{Proposition}
\newtheorem{lem}[thm]{Lemma}
\newtheorem{cor}[thm]{Corollary}

\numberwithin{equation}{section}

\newcommand{\add}{\mathrm{add}}
\newcommand{\Spec}{\mathrm{Spec}}
\newcommand{\Prim}{\mathrm{Prim}}
\newcommand{\bL}{\mathbf{L}}
\newcommand{\Hom}{\mathrm{Hom}}

\newcommand{\ad}{{\text{ad}}}

\newcommand{\C}{\mathbb C}

\newcommand{\h}{\mathfrak{h}}

\newcommand{\la}{\lambda}
\newcommand{\mc}{\mathcal}
\newcommand{\mf}{\mathfrak}

\newcommand{\N}{\mathbb N}
\newcommand{\one}{{\ov 1}}

\newcommand{\ov}{\overline}

\newcommand{\Z}{\mathbb Z}
\newcommand{\oo}{{\ov 0}}

\newcommand{\vare}{\varepsilon}


\newcommand{\Ula}{\ov{U}_\la}


\newcommand{\cP}{\mathcal{P}}
\newcommand{\cF}{\mathcal{F}}
\newcommand{\fk}{\mathfrak{k}}
\newcommand{\fg}{\mathfrak{g}}
\newcommand{\fh}{\mathfrak{h}}
\newcommand{\fn}{\mathfrak{n}}
\newcommand{\fb}{\mathfrak{b}}
\newcommand{\oa}{{\bar{0}}}
\newcommand{\ob}{{\bar{1}}}
\newcommand{\Ind}{{\rm Ind}}
\newcommand{\Coind}{{\rm Coind}}
\newcommand{\Res}{{\rm Res}}
\newcommand{\coker}{{\rm coker}}
\newcommand{\cH}{\mathcal{H}}
\newcommand{\cL}{\mathcal{L}}
\newcommand{\cO}{\mathcal{O}}
\newcommand{\Ann}{{\rm{Ann}}}
\newcommand{\LAnn}{{\rm{LAnn}}}
\newcommand{\op}{{\text{op}}}
\newcommand{\mC}{\mathbb{C}}
\newcommand{\mZ}{\mathbb{Z}}
\newcommand{\cB}{\mathcal{B}}
\newcommand{\id}{\mathrm{id}}



\title[Category~$\cO$ for the periplectic superalgebra]{The primitive spectrum and category~$\cO$ for the periplectic Lie superalgebra }

\author[Chih-Whi Chen]{Chih-Whi Chen}
\address{Department of Mathematics, Uppsala University, Box. 480,
	SE-75106, Uppsala, SWEDEN} \email{chih-whi.chen@math.uu.se}

\author[Kevin Coulembier]{Kevin Coulembier}
\address{School of Mathematics and Statistics, University of Sydney, AUSTRALIA} \email{kevin.coulembier@sydney.edu.au}

\date{}

\keywords{Harish-Chandra bimodules, periplectic Lie superalgebra, twisting functors, completion functors, primitive spectrum, category O, block decomposition}
\subjclass[2010]{
	16E30, 17B10}

\begin{document}

\begin{abstract}
We solve two problems in representation theory for the periplectic Lie superalgebra $\mathfrak{pe}(n)$, namely the description of the primitive spectrum in terms of functorial realisations of the braid group and the decomposition of category~$\cO$ into indecomposable blocks.

To solve the first problem we establish a new type of equivalence between category~$\cO$ for all (not just simple or basic) classical Lie superalgebras and a category of Harish-Chandra bimodules. The latter bimodules have a left action of the Lie superalgebra but a right action of the underlying Lie algebra. To solve the second problem we establish a BGG reciprocity result for the periplectic Lie superalgebra.
  \end{abstract}
\maketitle

\setcounter{tocdepth}{1}


 %
 %
   \vskip 1cm
  \section{Introduction}
  We study some aspects of the representation theory of the periplectic Lie superalgebra $\mathfrak{pe}(n)$. These algebras constitute one of the four families of algebras which appear, along with some exceptional ones, in the classification of simple classical Lie superalgebras, see~\cite{ChWa12, Mu12}. Note that $\mathfrak{pe}(n)$ is not actually simple itself, but has a simple subalgebra $\mathfrak{spe}(n)$ of codimension $1$. Unlike the Lie superalgebras $\mathfrak{gl}(m|n)$ and $\mathfrak{osp}(m|2n)$, the periplectic Lie superalgebra is not `basic', meaning it does not have a non-degenerate invariant bilinear form.
  
The primitive spectrum of a Lie (super)algebra $\fg$ refers to the set of annihilator ideals in~$U(\fg)$ of the simple modules, partially ordered with respect to inclusions.  The description of the primitive spectrum of a reductive Lie algebra is a classical result, with the last piece of the proof obtained in~\cite{Vogan}, see \cite[Section~15.3]{Mu12} for an overview. 
 In \cite{Co16} it was proved that the primitive spectrum of a {\em simple basic} classical Lie superalgebra can be described in terms of the combinatorics of the twisting functors on category~$\cO$. For the case $\mathfrak{gl}(m|n)$ this even led to a description of the primitive spectrum in terms of the ${\rm Ext}^1$-quiver of category~$\cO$. An essential ingredient in the construction in~\cite{Co16} was the equivalence between $\cO$ and a category of Harish-Chandra bimodules, see~\cite{BeGe, MaMe12}.

The first complication in representation theory of~$\mathfrak{pe}(n)$ lies in the existence of a Jacobson radical in the universal enveloping algebra, see~\cite{Se02}, which prevents the existence of `typical modules' with same properties as for other simple classical Lie superalgebras.
One of the consequences is that for $\mathfrak{pe}(n)$, contrary to the other simple classical Lie superalgebras, there was no known theory of Harish-Chandra bimodules available for~$\mathfrak{pe}(n)$.  We resolve this by studying an alternative version of Harish-Chandra bimodules, in terms of bimodules with a left action of the Lie superalgebra and a right action of its underlying Lie algebra. We prove an equivalence between category~$\cO$ and a category of such Harish-Chandra bimodules, for all (not just simple or basic) classical Lie superalgebras. Using this result we can then derive a descriptions of the primitive spectra for {\em all} classical Lie superalgebras in terms of translation functors on Harish-Chandra bimodules, in the spirit of~\cite{Vogan}.

Subsequently, we introduce Enright's completion functors on category~$\cO$ for arbitrary classical Lie superalgebras and prove that their combinatorics governs the primitive spectrum. Completion functors yield an action of the braid group (of the Weyl group of the underlying Lie algebra). Another such action on category~$\cO$ is given by the twisting functors, see e.g.~\cite{AS, CoMa14}. For Lie superalgebras of type I, we prove that whenever $\cO$ admits a suitable duality, twisting and completion functors are isomorphic up to conjugation with this duality, as is well-known in various specific cases, see~\cite{KM, CoMa14}. In particular, this allows us to express the primitive spectrum for $\mathfrak{pe}(n)$ in terms of twisting functors, as an extension of the main result of~\cite{Co16}.

For completeness we also generalise the known equivalences between category~$\cO$ and Harish-Chandra bimodules, see~\cite{MaMe12}, to $\mathfrak{pe}(n)$. Using a slight generalisation of the conventional proof we can construct equivalences based on Verma modules which are not necessarily `typical', but still satisfy Kostant's problem. This applies to $\mathfrak{pe}(n)$ by \cite{Se02}.

The second complication in representation theory of~$\mathfrak{pe}(n)$ is that its universal enveloping algebra has a very small centre, see~\cite{Go2001}.
Consequently, the block decomposition of the category of finite dimensional modules and of category~$\cO$ is not controlled by the combination of the centre and the root lattice. For finite dimensional modules, the block decomposition was recently obtained independently in~\cite{B+9} and \cite{Copn}, with one direction already proved earlier in~\cite{Ch}.
To determine the blocks in~$\cO$ we establish a BGG reciprocity result and exploit the technique in~\cite{B+9} of decomposing the translation functors using a `fake Casimir operator'. 
The block decomposition for $\cO$ was also obtained in unpublished work by Inna Entova-Aizenbud and Vera Serganova.

The paper is organised as follows. In Section~\ref{SecPrel} we recall some general notions in Lie superalgebra theory. In Section~\ref{SecHC} we obtain our results on Harish-Chandra bimodules. In Section~\ref{SecPrim} we observe that Musson's result (and the methods to prove it) of~\cite{Mu92} extend from simple to arbitrary classical Lie superalgebras.  Subsequently, we study twisting and completion functors, and their relation with the primitive spectrum. In Section~\ref{SecO} we study the BGG category~$\cO$ for $\mathfrak{pe}(n)$. In Section~\ref{Sec2} we focus on the specific case $\mathfrak{pe}(2)$, for which we determine the characters of the simple highest weight modules, classify the blocks in~$\cO$ up to equivalence, show that generic blocks are Koszul and give an explicit description of the primitive spectrum.

\subsection*{Acknowledgement}  The first author is supported by Vergstiftelsen and the second author is supported by ARC grant DE170100623.
Both authors thank the Institute of Mathematics of Academia Sinica in Taiwan for hospitality and support and Shun-Jen Cheng, Ian Musson and Weiqiang Wang  for useful discussions. The first author has learned the validity of the BGG reciprocity (Proposition \ref{Thm::BGGrcp}) and the linkage principle of category~$\mc O$ for~$\mf{pe}(2)$ from Shun-Jen Cheng and Weiqiang Wang.
   
\section{Preliminaries}\label{SecPrel}
For the entire paper, we fix the field of complex numbers $\mC$ as ground field.

\subsection{Super Algebra}\label{Z2}
We will always work with~$\mZ_2$-graded algebras, vector spaces and modules, which we will regard as `super' algebras and vector spaces. Morphisms in the category of super vector spaces are assumed to preserve the $\mZ_2$-grading, and the same thus holds for morphisms of superalgebras or modules over superalgebras. Unless specified otherwise, we consider left modules. For a homogeneous element $X$ of a $\mZ_2$-graded vector space we denote its parity by~$\overline{X}\in\{\bar{0},\bar{1}\}=\mZ_2$.
For any Lie superalgebra $\fg=\fg_{\oa}\oplus\fg_{\ob}$, see~\cite{Mu12}, we denote its universal enveloping algebra by~$U=U(\fg)$ and also write $U_0=U(\fg_{\oa})$. On the category of super vector spaces we denote the parity reversal functor by~$\Pi$. 

Any anti-automorphism $\sigma:\fg\to \fg$ of Lie superalgebras is an isomorphism of~$\mZ_2$-graded vector spaces satisfying
$$\sigma([X,Y])=(-1)^{\overline{X}.\overline{Y}}[\sigma(Y),\sigma(X)],$$
for all homogeneous elements $X, Y \in \mf g$. 
Such an anti-automorphism extends to an anti-isomorphism of the associative superalgebra $U(\fg)$.
As an example, we have the algebra anti-involution $t:U\to U$, which maps $X\in \fg$ to~$-X$. For a left (resp. right) $U$-module $M$, we denote by~$M^t$ the corresponding right (resp. left) $U$-module, obtained via twisting by~$t$. We have the restriction functor $\Res:=\Res^{\fg}_{\fg_{\oa}}$ from $\fg$-modules to~$\fg_{\oa}$-modules and its left adjoint $\Ind:=\Ind^{\fg}_{\fg_{\oa}}=U\otimes_{U_0}-$ and right adjoint $\Coind^{\fg}_{\fg_{\oa}}=\Hom_{U_0}(U,-)$. We will usually leave out the references to~$\fg$ and $\fg_{\oa}$ in this notation. 

For a Lie superalgebra $\fg$, we call an element $X_0\in\fg_{\oa}$ a {\em grading element} if its adjoint action on $\fg$ is semisimple, with eigenvalues in~$\mZ$ such that the induced $\mZ$-grading satisfies
$$\fg_{\oa}\;=\;\bigoplus_{i\in2\mZ}\fg_i\quad\mbox{and}\quad \fg_{\ob}=\bigoplus_{i\in 2\mZ+1}\fg_i.$$
Examples of Lie superalgebras with such an element are all reductive Lie algebras and $\mathfrak{osp}(m|2n)$. More examples will be given in Section~\ref{typeI0}.

\subsection{Classical Lie superalgebras}
We call a Lie superalgebra $\fg$ over $\mC$ {\bf classical}  provided $\dim_{\mC}\fg$ is finite, the Lie algebra $\fg_{\oa}$ is reductive and $\fg_{\ob}$ is a semisimple $\fg_{\oa}$-module for the adjoint action. We do {\em not} assume that~$\fg$ is simple.


\subsubsection{Finite dimensional modules}

We denote by~$\cF^{\oa}$ the category of finite dimensional semisimple $\fg_{\oa}$-modules. We denote by~$\cF=\cF(\fg,\fg_{\oa})$ the category of finite dimensional $\fg$-modules which restrict to objects in~$\cF^{\oa}$. Since $\Ind$ maps from $\cF^{\oa}$ to~$\cF$, with right adjoint given by the exact restriction functor $\Res:\cF\to\cF^{\oa}$, the category~$\cF$ has enough projective modules. The projective modules are precisely the direct summands of modules $\Ind V$, for arbitrary~$V\in\cF^{\oa}$. We denote the full subcategory of projective modules in~$\cF$ by~$\cP$.
The natural duality on $\cF$ is given by~$M\mapsto (M^\ast)^t$, with $M^\ast=\Hom_{\mC}(M,\mC)$.

 For an arbitrary~$\fg$-module $M$, we will introduce some {\em full} subcategories of the category of all $U$-modules. We denote by~$\cP\otimes M$, resp. $\cF\otimes M$, the category of all $\fg$-modules of the form $V\otimes M$, with~$V\in \cP$, resp. $V\in \cF$. By $\add(\cP\otimes M)$ and $\add(\cF\otimes M)$ we denote the respective categories of all direct summands of modules in the former categories. 
 By $\langle \cP\otimes M\rangle=\langle \cF\otimes M\rangle$ we denote the abelian category of subquotients of modules in~$\cP\otimes M$ and by~$\coker (\cP\otimes M)$, resp. $\coker(\cF\otimes M)$, the category of modules which are presented by modules in~$\cP\otimes M$, resp. $\cF\otimes M$.

\subsubsection{Borel and Cartan subalgebras} \label{SectBC}
We choose a Cartan subalgebra $\fh_{\oa}$ of~$\fg_{\oa}$. The non-zero weights appearing in the adjoint representation of~$\fg$, with respect to~$\fh_{\oa}$, are then denoted by~$\Phi=\Phi_{\oa}\cup\Phi_{\ob}\subset\fh^\ast$. We set $\Gamma=\mZ\Phi$ and let $\Upsilon\subset\fh^\ast_{\oa}$ denote the set of integral weights, that is, weights appearing in modules in~$\cF$, or equivalently in~$\cF^{\oa}$. By construction, $\Gamma\subset\Upsilon\subset \fh_{\oa}^{\ast}$ is a chain of subgroups.

Choose a Borel subalgebra $\fb_{\oa}\supset\fh_{\oa}$ of~$\fg_{\oa}$. We have a corresponding decomposition $\Phi_{\oa}=\Phi_{\oa}^{+}\sqcup\Phi_{\oa}^-$ into positive and negative roots. Let $\rho_{\oa}$ denote half the sum of all elements in~$\Phi_{\oa}^+$. We denote the $\rho_{\oa}$-shifted action of the Weyl group $W=W(\fg_{\oa}:\fh_{\oa})$ on $\fh_{\oa}^\ast$ by~$w\cdot\lambda=w(\lambda+\rho_{\oa})-\rho_{\oa}$.

Unless mentioned otherwise, we express properties of weights with respect to the structure of~$\fg_{\oa}$. A weight $\lambda\in\fh_{\oa}^\ast$ is regular if the size of the dot $W$-orbit of~$\lambda$ is $|W|$. A weight $\lambda$ is dominant if there exists no $w\in W$ such that~$w\cdot\lambda-\lambda$ is a non-empty sum of elements in~$\Phi_{\oa}^+$.
The integral Weyl group for $\lambda$ is the Coxeter group
$$W^\lambda\;=\;\{w\in W\,|\, w\cdot\lambda-\lambda\in\Gamma\}.$$
Clearly this group only depends on $\lambda+\Upsilon$.

For a $\fg$-weight module $M=\bigoplus_{\mu\in\fh_{\oa}^\ast}M^\mu$ with finite dimensional weight spaces, we let $\text{ch}M$ denote the $\mf g_\oa$-character of~$M$, namely, 
\begin{equation}\label{chDs}\text{ch}M : = \sum_{\mu\in \h_{\oa}^*}\text{dim}M^\mu e^{\mu}.\end{equation}

\subsubsection{Category~$\cO$}\label{SecDefO}
We fix a Cartan subalgebra $\mf h_\oa$ and Borel subalgebra $\mf b_\oa\supset\fh_{\oa}$ of~$\fg_{\oa}$. We let $\mf{n}^{+}_\oo$, resp. $\mf n^-_\oo$, be the subalgebras of~$\mf{g}_\oo$ spanned by positive, resp. negative, root vectors.

We denote by~$\cO=\cO(\fg,\fb_{\oa})$ the BGG category of~$\fg$-modules which are finitely generated, semisimple as $\fh_{\oa}$-modules and locally finite as $\fb_{\oa}$-modules. This is thus the category of~$\fg$-modules which are mapped by~$\Res$ to modules in the BGG category~$\cO^{\oa}:=\cO(\fg_{\oa},\fb_{\oa})$ of~\cite{BGG76}. For $\Lambda\subset \fh^\ast$ any subset closed under the action of~$\Gamma=\mZ\Phi$, we denote by~$\cO_\Lambda$ the full subcategory of modules with only non-zero weight spaces for elements in~$\Lambda$. For instance, we have $\cF\subset\cO_{\Upsilon}$, and a decomposition
$\cO\;\cong\;\bigoplus_{\Lambda\in \fh^\ast/\Gamma}\cO_{\Lambda}.$

Assume the Cartan subalgebra $\fh_{\oa}$ contains a grading element $X_0$ as in Section~\ref{Z2}. We choose a representative for each equivalence class in~$\mC/\mZ$. This allows to define a map $\pi_0:\mC\to\mZ_2$ by setting $\pi_0(a+i)$ equal to~$i\mbox{(mod 2)}$ for each such representative $a$ and $i\in\mZ$. This leads to a map
$$\pi:\fh^\ast_{\oa}\to\mZ_2,\quad \lambda\mapsto\pi_0(\lambda(X_0)).$$
We then have a decomposition
$$\cO\;=\; \cO_{red}\oplus\Pi \cO_{red},$$
where $\cO_{red}$ is the subcategory of all modules where each space $M^\mu$ is homogeneous of parity~$\pi(\mu)$. The category~$\Pi \cO_{red}$ is equivalent to~$\cO_{red}$ and contains all modules where each space $M^\mu$ has parity~$\pi(\mu)+\ob$. Furthermore, the category~$\cO_{red}$ is equivalent to the category with same objects as in~$\cO$, but where we allow all, not just grading preserving, morphisms.

\subsubsection{Dualities on $\cO$} An anti-involution $\sigma$ of~$\fg$ is a {\em good involution} (with respect to a given triangular decomposition of~$\fg_{\oa}$) if
$\sigma(\fh_{\oa})=\fh_{\oa}$ and $\sigma(\fn^+_{\oa})=\fn^-_{\oa}$.  This induces an involution $\sigma^\ast$ on $\fh^\ast_{\oa}$, where $\sigma^\ast(\lambda)(H)=\lambda(\sigma(H))$, for all $\lambda\in\fh^\ast_{\oa}$ and $H\in\fh_{\oa}$.

Define
$M^{\circledast}\;=\;\bigoplus_{\mu} (M^\mu)^\ast,$
which is a right $\fg$-module as a submodule of~$M^\ast=\Hom_{\mC}(M,\mC)$. Then define the left $U$-module $D_\sigma M$ with underlying vector space $M^{\circledast}$ and action given by
$$(X\alpha)(v)\;=\;(-1)^{\overline{X}.\overline{\alpha}}\alpha(\sigma(X)v),\quad\mbox{for~$\alpha\in M^{\circledast}$ and $v\in M$}.$$
This yields a contravariant auto-equivalence $D_\sigma$ of~$\cO$. We have
\begin{align} \label{dualchar} &\text{ch} D_\sigma M : = \sum_{\mu\in \h^*_\oo}\text{dim}M^\mu e^{\sigma^\ast\mu}.\end{align}
Furthermore, for any $\alpha\in\Phi$, we have
$\sigma(\fg_{\alpha})=\fg_{-\sigma^\ast(\alpha)}.$
In particular, $\sigma^\ast$ restricts to a bijection of the set of simple roots in~$\Phi_{\oa}^+$.

\begin{example}${}$\label{ExInv}
  \begin{enumerate}[(a)]
 	\item If $\mf g$ is one of the contragredient Lie superalgebras in~\cite[Theorem 5.1.5]{Mu12} or $\mathfrak{q}(n)$, then the antiautomorphism of~\cite[Proposition 8.1.6]{Mu12} satisfies the above properties. In this case, $D_\sigma$ is the contragredient duality of~\cite[Section 13.7]{Mu12} and $\sigma^\ast$ is the identity on $\fh_{\oa}^\ast$. 	
	
	\item For a reductive Lie algebra $\fg=\fg_{\oa}$, the anti-involution of~$\fg$ in~\cite[Section~0.5]{Hu08} is a special case of (a). In this case we get the simple preserving duality~$D_\sigma=(\cdot)^\vee$ on $\cO^{\oa}$ of~\cite[Section~3.2]{Hu08}.

	\item If $\mf g =\mf{pe}(n)$, will introduce an appropriate anti-involution in Section~\ref{SecTwi1}. 
 \end{enumerate}
 \end{example}

\subsubsection{Projective modules in~$\cO$} 
Fix $\Lambda\in\fh^\ast_{\oa}/\Upsilon$. For now we consider the Verma module $M^{\oa}_\lambda=U_0\otimes_{U(\fb_{\oa})}\mC_\lambda$ without specifying in which parity it is assumed to be, for~$\lambda\in\Lambda$.
\begin{lem} \label{pReDomi}
 Let $M\in\cO_\Lambda$ be such that~$\Res M$ is projective in~$\cO^{\oa}$ and contains as a direct summand some $M_\lambda^{\oa}$ for~$\lambda\in\Lambda$ regular and dominant. Then $\add(\cP\otimes M)$ is the category of projective modules in~$\cO_\Lambda$. If $M$ is projective in~$\cO$, then $\add(\cP\otimes M)$ is equal to~$\add(\cF\otimes M)$.
\end{lem}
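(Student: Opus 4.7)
The strategy is to leverage the tensor identity (projection formula) $\Ind(W)\otimes N\cong \Ind(W\otimes \Res N)$, valid for any $W\in\cF^{\oa}$ and any $\fg$-module~$N$ as a standard consequence of the Hopf structure on~$U(\fg)$. Combining this with the facts that $\Ind$ is left adjoint to the exact functor~$\Res$ (so $\Ind$ preserves projectives), that $\Res$ is likewise exact and preserves projectives, and that every $V\in\cP$ is by definition a direct summand of some~$\Ind W$ with $W\in\cF^{\oa}$, the proof reduces to classical statements about projectives in~$\cO^{\oa}$.

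For the inclusion $\add(\cP\otimes M)\subseteq\{\text{projectives in }\cO_\Lambda\}$, I would take any $V\in\cP$ realised as a direct summand of some~$\Ind W$. The tensor identity gives $\Ind W\otimes M\cong \Ind(W\otimes\Res M)$, and since $\Res M$ is projective in~$\cO^{\oa}$ and $W\in\cF^{\oa}$, the module $W\otimes \Res M$ is projective in~$\cO^{\oa}$ (tensor of a finite-dimensional module with a projective); so $\Ind(W\otimes\Res M)$ is projective in~$\cO$ and hence so is its summand~$V\otimes M$. It lies in~$\cO_\Lambda$ because $V$ has weights in~$\Upsilon$ and $\Lambda$ is a $\Upsilon$-coset. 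Conversely, let $Q\in\cO_\Lambda$ be projective, so that $\Res Q$ is projective in~$\cO^{\oa}_\Lambda$. A standard translation-functor argument, exploiting that $\la$ is dominant regular (so $M^{\oa}_\la=P^{\oa}_\la$) and that $\Lambda$ is a single $\Upsilon$-coset, shows every projective in~$\cO^{\oa}_\Lambda$ occurs as a direct summand of some $V\otimes M^{\oa}_\la$ with~$V\in\cF^{\oa}$. Since $M^{\oa}_\la$ is by hypothesis a direct summand of~$\Res M$, it follows that $\Res Q$ is a direct summand of~$V\otimes \Res M$. Applying $\Ind$ and the tensor identity, $\Ind\Res Q$ is a direct summand of $(\Ind V)\otimes M$ with~$\Ind V\in\cP$. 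The counit $\Ind\Res Q\tto Q$ splits because $Q$ is projective, so $Q$ itself is a direct summand of~$(\Ind V)\otimes M$ and hence lies in~$\add(\cP\otimes M)$.

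For the second statement, assume $M$ is projective in~$\cO$. For any $V\in\cF$, the functor $\Hom_\fg(V\otimes M,-)\cong \Hom_\fg(M,V^*\otimes-)$ is the composition of the exact functors $V^*\otimes-$ and $\Hom_\fg(M,-)$, hence $V\otimes M$ is projective in~$\cO_\Lambda$. The first part then forces $V\otimes M\in\add(\cP\otimes M)$, whence $\add(\cF\otimes M)\subseteq\add(\cP\otimes M)$; the reverse inclusion is immediate from~$\cP\subseteq\cF$. The only step that requires more care than in the reductive setting is the verification that the translation-functor description of projectives in~$\cO^{\oa}_\Lambda$ applies uniformly to non-integral $\Upsilon$-cosets, but this is classical once a dominant regular weight in~$\Lambda$ is fixed as the translation target.
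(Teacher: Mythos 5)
Your argument is correct and essentially the paper's: the Hom-isomorphism chain the paper uses is just the adjoint form of your tensor identity $\Ind(W)\otimes M\cong\Ind(W\otimes\Res M)$, and both proofs reduce the converse inclusion to the Bernstein--Gelfand description of the projectives of~$\cO^{\oa}_\Lambda$ as $\add(\cF^{\oa}\otimes M^{\oa}_\lambda)$ together with the splitting of the counit $\Ind\Res Q\tto Q$. The one step you assert without justification---that $\Res$ sends projectives of~$\cO$ to projectives of~$\cO^{\oa}$---is standard and does hold here, since $\Res$ has the exact right adjoint $\Coind=\Hom_{U_0}(U,-)$, which maps $\cO^{\oa}$ into~$\cO$.
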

\begin{proof}
Take an arbitrary~$V\in \cF^{\oa}$, then we have isomorphisms of functors 
\begin{eqnarray*}
\Hom_{\fg}(\Ind(V)\otimes M,-)&\cong& \Hom_{\fg}(\Ind(V),\Hom_{\mC}(M,-)^{\ad})\\
&\cong& \Hom_{\fg_{\oa}}(V\otimes \Res(M),\Res-).
\end{eqnarray*}
This functor is therefore exact on $\cO_\Lambda$.
Furthermore, it follows from \cite[Theorem~3.3]{BeGe} that~$\add(\cF^{\oa}\otimes M^{\oa}_\lambda)$ is the category of projective modules in~$\cO^{\oa}_\Lambda$, which shows that~$\add(\cP\otimes M)$ is the category of projective modules in~$\cO_\Lambda$. 

If $M$ is projective itself, then clearly all modules in the category~$\add(\cF\otimes M)$ are projective. The latter contains the category  $\add(\cP\otimes M)$ of projective modules, from which the conclusion follows.
\end{proof}

We have the following corollary. For the definition of the notions in part (ii), we refer to Section~\ref{SecI}
\begin{cor} \label{ReDomi}
Take $\lambda\in\Lambda$ regular and dominant.
\begin{enumerate}[(i)]
\item The category of projective modules in~$\cO_\Lambda$ is $\add(\cF\otimes \Ind M_{\la}^{\oa})$.
\item If $\fg$ is of type I, the category of projective modules in~$\cO_\Lambda$ is $\add(\cP\otimes M_{\la})$.
\end{enumerate}
	\end{cor}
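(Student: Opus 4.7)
The plan is to apply Lemma~\ref{pReDomi} in both parts, with $M=\Ind M_\lambda^{\oa}$ for (i) and with $M=M_\lambda$ for (ii). In either case one needs two ingredients: a PBW-type restriction formula to identify $\Res M$ as a $\fg_{\oa}$-module, and the classical fact that for $\lambda$ regular dominant the Verma $M_\lambda^{\oa}$ is projective in $\cO^{\oa}$.

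For part~(i), first observe that $\Ind M_\lambda^{\oa}$ is projective in $\cO$: the functor $\Ind$ is left adjoint to the exact functor $\Res$, and $M_\lambda^{\oa}$ is projective in $\cO^{\oa}$. Next, the super PBW theorem furnishes a $\fg_{\oa}$-module isomorphism
\[
\Res\Ind V\;\cong\;\Lambda\fg_{\ob}\otimes V\qquad\text{for every }V\in\cO^{\oa},
\]
where $\fg_{\oa}$ acts on $\Lambda\fg_{\ob}$ via the adjoint representation. Since $\fg_{\ob}$, and hence $\Lambda\fg_{\ob}$, lies in $\cF^{\oa}$, and tensoring with an object of $\cF^{\oa}$ preserves projectivity in $\cO^{\oa}$, the module $\Res\Ind M_\lambda^{\oa}$ is projective in $\cO^{\oa}$. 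The decomposition $\Lambda\fg_{\ob}=\mC\oplus\fg_{\ob}\oplus\Lambda^{2}\fg_{\ob}\oplus\cdots$ moreover exhibits $M_\lambda^{\oa}$ as a direct summand of $\Res\Ind M_\lambda^{\oa}$. Both conclusions of Lemma~\ref{pReDomi} now apply, yielding $\add(\cP\otimes\Ind M_\lambda^{\oa})=\add(\cF\otimes\Ind M_\lambda^{\oa})$ together with the identification of this category with the projective modules in $\cO_\Lambda$.

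For part~(ii), in type~I with grading $\fg=\fg_{-1}\oplus\fg_{0}\oplus\fg_{1}$ and super-Borel $\fb=\fb_{\oa}\oplus\fg_{1}$ (as fixed in Section~\ref{SecI}), the analogous PBW computation gives
\[
\Res M_\lambda\;\cong\;\Lambda\fg_{-1}\otimes M_\lambda^{\oa}
\]
as $\fg_{\oa}$-modules. This is again projective in $\cO^{\oa}$ by the same tensoring argument, and it contains $M_\lambda^{\oa}$ as a direct summand through the degree zero component $\mC\subset\Lambda\fg_{-1}$. The first conclusion of Lemma~\ref{pReDomi} then delivers the claim. Note that we do not need $M_\lambda$ itself to be projective in $\cO$, which is fortunate since this typically fails for super Verma modules.

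The only point to settle carefully is the PBW identification $\Res\Ind V\cong\Lambda\fg_{\ob}\otimes V$ and its type~I analogue $\Res M_\lambda\cong\Lambda\fg_{-1}\otimes M_\lambda^{\oa}$; both are standard consequences of the super PBW theorem together with the symmetrization map, and beyond this bookkeeping no genuine obstacle is expected.
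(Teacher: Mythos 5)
Your proposal is correct and follows essentially the same route as the paper: both parts are reduced to Lemma~\ref{pReDomi} with $M=\Ind M_\lambda^{\oa}$ (using projectivity of $\Ind M_\lambda^{\oa}$ in $\cO$ and the summand $\mC\subset\Lambda\fg_{\ob}$) and with $M=M_\lambda$ (using $\Res M_\lambda\cong\Lambda\fg_{-1}\otimes M_\lambda^{\oa}$), respectively. You merely spell out the PBW and adjunction details the paper leaves implicit; no gap.
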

	\begin{proof}
	By construction, $\Ind M_{\la}^{\oa}$ is projective in~$\cO$. Since $\mC$ is a direct summand of the $\fg_{\oa}$-module $\Lambda\fg_{\ob}$, we find that~$M_\lambda^{\oa}$ is a direct summand of~$\Res\Ind M_\lambda^{\oa}$. Hence, part (i) follows from Lemma~\ref{pReDomi}.
	
	For part (b), we have that~$\Res M_\lambda =\Lambda\fg_{-1}\otimes M^{\oa}_\lambda$ is projective and contains $M_\lambda^{\oa}$ as a direct summand. The conclusion thus also follows from Lemma~\ref{pReDomi}.
	\end{proof}


\subsection{Classical Lie superalgebras of type I}\label{SecI}
A classical Lie superalgebra $\mf g$ is said to be of type I if $\mf g$ has a $\Z_2$-compatible $\Z$-gradation  
$\mf g=\mf g_{-1}\oplus \mf g_0 \oplus \mf g_{1}$. Examples of such algebras are 
$$\mathfrak{gl}(m|n),\;\,\mathfrak{sl}(m|n),\;\, \mathfrak{psl}(n|n),\;\,\mathfrak{osp}(2|2n),\;\mf{pe}(n),\;\mf{spe}(n)=[\mf{pe}(n),\mf{pe}(n)],$$ 
see \cite{ChWa12} and \cite{Mu12} for a complete treatment of these Lie superalgebras.

\subsubsection{Type I-0}\label{typeI0}
We will say that a Lie superalgebra $\fg$ of type I is of type I-0 if the $\mZ$-grading is induced by a grading element $H_0\in\fg_{\oa}$ as in Section~\ref{Z2}. It is then automatic that~$H_0\in\fh_{\oa}$ and $\fh_{\oa}$ is its own commutator in~$\fg$, so we simply write $\fh=\fh_{\oa}$. In the above list, the Lie superalgebras $\mathfrak{gl}(m|n)$, $\mathfrak{osp}(2|2n)$ and $\mathfrak{pe}(n)$ are always of type~I-0. The superalgebra $\mathfrak{sl}(m|n)$ is of type I-0 if and only if $m\not=n$.

Throughout this paper, we set $\mf g_{\geq 0}:= \mf g_0 \oplus \mf g_1$ and $\mf g_{\leq 0}:= \mf g_{-1}\oplus \mf g_0$ for classical Lie superalgebras of type I-0.  We have the corresponding Borel subalgebra $\fb=\fb_{\oa}\oplus\fg_{1}$ of~$\fg$. We let $\Phi_{\ob}^+$, resp. $\Phi_{\ob}^-$, denote the set of weights appearing in~$\fg_{1}$, resp. $\fg_{-1}$. By assumption, $\Phi^+_{\ob}\cap\Phi^-_{\ob}=\emptyset$, and neither set contains the weight $0$. We also set $\Phi^+=\Phi_{\oa}^+\cup \Phi_{\one}^+$ and $\Phi^-=\Phi_{\oa}^-\cup \Phi_{\one}^-$, where we note that the unions are disjoint.
We also consider the partial order $\le$ on $\fh^\ast$ where $\mu\le \lambda$ if and only if $\lambda-\mu$ is a sum of elements in 
$$\Phi^+\cup (-\Phi^-)\;=\; \Phi^+_{\oa}\,\sqcup\, (\Phi_{\ob}^+\cup (-\Phi_{\ob}^-)).$$
We also denote by~$\rho_1$, resp. $\rho_{-1}$, half the sum of elements in~$\Phi_{\ob}^+$, resp. $\Phi_{\ob}^-$. Finally, we set $\omega=\rho_1+\rho_{-1}$. Note that we have $\omega=0$ for~$\mathfrak{gl}(m|n)$, $\mathfrak{osp}(2|2n)$ and $\mathfrak{spe}(n)$, but $\omega\not=0$ for~$\mathfrak{pe}(n)$.

We choose a function $\pi$ as in Subsection \ref{SecDefO} and henceforth denote $\cO_{red}$ simply by~$\cO$.

\subsubsection{Verma modules and Kac modules}  Fix a classical Lie superalgebra $\fg$ of type I-0.

We let $M_\la$ be the Verma module of highest weight $\la$ (with respect to~$\le$ in \ref{typeI0})
$$M_\la := U(\mf g)\otimes_{U(\mf b)} \Pi^{\pi(\lambda)} \mC_\la\cong \text{Ind}_{\mf g_{\geq 0}}^{\mf g}M^\oa_\la,$$
Using the element $H_0\in\fh$ and the classical arguments, see e.g.~\cite[\S 1.2]{Hu08}, one shows that~$M_\la$ has a unique maximal submodule. The corresponding unique simple quotient of~$M_\la$ is denoted by~$L_\la$. We let $P_\la\in \mc O$  be the projective cover of~$L_\la$ in~$\mc O$. We will freely use that~$\text{ch} M$ determines completely the Jordan-H\"older decomposition multiplicities $[M:L_\lambda]$.

A module $M$ has a Verma flag if for some $k\in\N$ it has a filtration by submodules
$$0=M_0\subset M_1\subset\cdots\subset M_k=M,$$
where $M_i/M_{i-1}$ is a Verma module for~$1\le i\le k$. We denote by~$(M:M_\lambda)$ the number of indices $i$ for which $M_i/M_{i-1}\cong M_\lambda$. It follows again that the numbers $(M:M_\lambda)$ are determined by~$\text{ch}M$, so in particular do not depend on the chosen filtration. We denote by~$\cO^\Delta$ the full subcategory of modules which have a Verma flag. Furthermore, for any subset $T\subset\fh^\ast$, we denote by~$\cO^\Delta(T)$ the category of modules with Verma flag such that~$(M:M_\lambda)$ is only non-zero for~$\lambda\in T$.

 We now consider the existence and structure of projective covers in~$\mc O$.  For a given weight $\la \in \h^*$, let $Q_\la$ be the projective cover of~$\text{Res}L_\la$ in~$\mc O^\oa$. Therefore we have a $\mf g$-epimorphism $\text{Ind}Q_\la \twoheadrightarrow L_\la$. Observe that   $\text{Ind}Q_\la$ is projective and $\text{Ind}Q_\la = \text{Ind}^{\mf g}_{\mf g_{\geq 0}} \text{Ind}^{\mf g_{\geq 0}}_{\mf g_\oo} Q_\la$. Since $\text{Ind}_{\mf g_\oo}^{\mf g_{\geq 0}} Q_\la$ has a filtration with quotients of~$\mf g_\oo$-Verma modules with trivial $\mf g_1$-action, we may conclude that~$\text{Ind} Q_\la$ has $\mf g$-Verma flag. Consequently, as a direct summand of~$\text{Ind} Q_\la$, the projective cover $P_\la$ of~$L_\la$ has a $\mf g$-Verma flag, see e.g.~\cite[Proposition~3.7]{Hu08}.

Let $(M^\oa_\la)^{\vee}$ denote the dual Verma module in~$\mc O^\oo$, that is,  $(M^\oa_\la)^{\vee}$ is the image of~$M^\oa_\la$ under the duality functor $(\cdot)^\vee$ of~$\mc O^\oo$ from Example~\ref{ExInv}(b). Then we define
$$M_\la^{\vee}:= \text{Coind}_{\mf g_{\leq 0}}^{\mf g} ((M^\oa_\la)^{\vee}).$$

We extend $L_\la^\oa$ trivially to a $\mf {g}_{\geq 0}$-module concentrated in degree $\pi(\lambda)$ and define the (dual) Kac module of~$L_\la^\oa$ as follows: 
$$K_\la :=
\text{Ind}_{\mf g_{\geq 0}}^{\mathfrak{g}}L_\la^\oa\quad\mbox{and}\quad K^{\vee}_\la: = \text{Coind}_{\mf g_{\leq 0}}^{\mf g}L_\la^\oa.$$

Note that it follows from the definitions that we have
\begin{equation}
\label{topsoc}
\Soc (M^\vee_\lambda)\;=\;\Soc (K_\lambda^\vee)\;=\; L_\lambda \;=\; \Top(K_\lambda)\;=\;\Top (M_\lambda).
\end{equation}

Note that \cite[Theorem~2.2]{BF} implies that 
$$K_\lambda\cong \Coind_{\fg_{\ge 0}}^{\fg}L_{\lambda+2\rho_{-1}}^{\oa}\quad\mbox{and}\quad K^\vee_\lambda\cong \Ind_{\fg_{\le 0}}^{\fg}L_{\lambda-2\rho_1}^{\oa}.$$
It thus follows that~$K_\lambda$ also has simple socle and $K^\vee_\lambda$ has simple top. Moreover, calculating the homomorphisms between both modules shows
\begin{equation}
\label{topsoc2}
\Top (K_\lambda^\vee)\;\cong\; \Soc (K_{\lambda-2\omega}).
\end{equation}

\subsubsection{The periplectic Lie superalgebra} \label{subsect::pen} 
We are interested in the {\em periplectic Lie superalgebra}~$\mf{pe}(n)$, which is a subalgebra of the 
general linear Lie superalgebra $\mf{gl}(n|n)$ preserving a non-degenerated odd symmetric bilinear form. We refer the reader to  \cite[Section 1.1]{ChWa12} for more details. 
The standard matrix realisation is given
by
\[ \mf{pe}(n)=
\left\{ \left( \begin{array}{cc} a & b\\
c & -a^t\\
\end{array} \right)\| ~ a,b,c\in \C^{n\times n},~\text{$b$ is symmetric and $c$ is skew-symmetric} \right\}.
\]

Let $E_{ij}$ denote the
$(i,j)$-th matrix unit in~$\mf{gl}(n|n)$, for~$1\le i,j\le 2n$. We set $e_{ij}: = E_{ij} -E_{n+j,n+i}\in \mf{pe}(n)_\oa$,  for all $1\leq i,j \leq n$. 
The Cartan subalgebra $\mf h\subset \mf{pe}(n)_\oa$ is defined as $\mf h: = \bigoplus_{1\leq i \leq n}\C e_{ii}$. Let $\{\vare_i|i=1,\ldots,n\}$ be the dual basis of~$\{e_{ii}\}_{1\leq i \leq n}$ in 
$\mathfrak{h}^*$. 
The algebra $\mf{pe}(n)$ admits a $\mathbb{Z}_2$-compatible $\mathbb{Z}$-gradation inherited from the $\mathbb{Z}$-gradation of~$\mf{gl}(n|n)$ such that 
$$\Phi_{\ob}^-=\{-\vare_i - \vare_j|\text{ } 1\leq i < j \leq n\}\quad\Phi_{\oa}=\{ \vare_i - \vare_j| \text{ } 1\leq i \neq j \leq n\}$$
$$\mbox{and}\quad \Phi_{\ob}^+=\{\vare_i + \vare_j| \text{ } 1\leq i \leq j \leq n\}.$$
Note that~$\mathfrak{pe}(n)$ is of type I-0 for grading element
$H_0\;:=\; \frac{1}{2}\sum_{i=1}^n e_{ii}.$

\section{Harish-Chandra bimodules}\label{SecHC}

In this section, we let $\fg$ be an arbitrary classical Lie superalgebra.

\subsection{Conventions for bimodules}

 Let $A,B$ be two algebras in the set $\{U,U_0\}$ and set $C=U$ if $A=B=U$ and $C=U_0$ otherwise. By $(A,B)$-modules, we mean modules over $A\otimes_{\mC}B^{\op}$. For such a bimodule $N$, we denote by~$N^{\ad}$ the $C$-module obtained via the algebra morphism $C\hookrightarrow A\otimes B^{\op}$ given by the composition of the comultiplication $C\to C\otimes C$ with~$(\id, t)$.
 
We have the left exact functor 
$$\cL(-,-): (B\mbox{-mod})^{\op}\times A\mbox{-mod}\;\to\;A\otimes_{\mC}B^{\op}\mbox{-mod},$$ which takes
the maximal $(A,B)$-submodule of~$\Hom_{\mC}(M,N)$ such that~$\Hom_{\mC}(M,N)^{\ad}$ is a (possibly infinite) direct sum of modules in~$\cF$ if $C=U$, or in~$\cF^{\oa}$ if $C=U_0$. With slight abuse of notation, we will use the same notation $\cL$ for the functor corresponding to each choice of~$A$ and $B$.

For an $(A,B)$-module $M$, we denote by~$\LAnn_A M$ the ideal in~$A$ of elements $u\in A$ such that~$u\otimes 1$ acts trivially on $M$. 

For a (left) $A$-module $V$, a right $B$-module $W$ and an $(A,B)$-module $M$, we interpret $V\otimes_{\mC}M$ and $M\otimes_{\mC}W$ as $(A,B)$-modules in the natural way.

\subsection{$(U,U_0)$-modules}
Let $\cB$ denote the category of finitely generated $(U, U_0)$-modules $N$ for which $N^{\ad}$ is a (possibly infinite) direct sum of modules in~$\cF^{\oa}$.  For a two-sided ideal $J\subset U_0$, we let $\cB(J)$ denote the full subcategory of~$\cB$ of bimodules $X$ such that~$XJ=0$. For any $U_0$-module $M$, we have a canonical monomorphism
\begin{align}
&\iota_M:U_0/\Ann_{U_0}(M)\hookrightarrow \cL(M,M). \label{KSProb}
\end{align}

The following is a variation on~\cite[Theorem~3.1]{MS}. We actually start from a $\fg_{\oa}$-module satisfying the same conditions as {\it loc. cit.} 
\begin{thm}\label{ThmMS1}
Take a $\fg_{\oa}$-module $M$ and set $I:=\Ann_{U_0}(M)$. If
\begin{enumerate}[(a)]
\item the monomorphism $\iota_M$ is an isomorphism, and \label{ThmMS1a}
\item the module $M$ is projective in~$\langle \cF^{\oa}\otimes M\rangle$, \label{ThmMS1b}
\end{enumerate}
then we have an equivalence of categories
$$\Psi=-\otimes_{U_0}M\,:\;\;\; \cB(I)\,\to\, \coker(\cF\otimes \Ind M),$$
with inverse $\cL(M,-)$.
\end{thm}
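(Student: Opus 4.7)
The plan is to model the argument on the classical Bernstein--Gelfand pattern, as in Theorem~3.1 of \cite{MS}, adapted to the present $(U,U_0)$-bimodule setting. The starting point is tensor--hom, which together with the ad-locally-finite restriction built into $\cB$ and the identity $\cL(M,X)\cdot I=0$ (forced by $IM=0$) yields the adjunction $\Psi\dashv\Phi$ with $\Phi=\cL(M,-)$. The theorem then reduces to showing that the unit $\eta_N\colon N\to\cL(M,N\otimes_{U_0}M)$ and the counit $\epsilon_X\colon\cL(M,X)\otimes_{U_0}M\to X$ are isomorphisms on $\cB(I)$ and $\coker(\cF\otimes\Ind M)$ respectively.

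The key input on the bimodule side is the identification
$$\cL(M,\Ind M)\;\cong\;U/UI$$
as $(U,U_0)$-bimodules. The natural map $u\mapsto(m\mapsto u\otimes m)$ has kernel exactly $UI$, since $\Ann_{U_0}M=I$ and $U\cong\Lambda\fg_{\ob}\otimes U_0$ as a right $U_0$-module by PBW. For surjectivity, the same PBW decomposition gives $\Res\Ind M\cong\Lambda\fg_{\ob}\otimes M$, so
$$\cL(M,\Ind M)\;\cong\;\Lambda\fg_{\ob}\otimes\cL(M,M)\;\cong\;\Lambda\fg_{\ob}\otimes U_0/I,$$
where the second isomorphism is assumption (a). Tensoring with $V\in\cF$ then gives $\cL(M,V\otimes\Ind M)\cong V\otimes U/UI$, so $\eta$ is an isomorphism on the bimodules $V\otimes U/UI$ and $\epsilon$ is an isomorphism on the $\fg$-modules $V\otimes\Ind M$; these will play the role of projective generators on the two sides.

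To propagate these isomorphisms I invoke assumption (b): since $M$ is projective in $\langle\cF^{\oa}\otimes M\rangle$ and tensoring with any $W\in\cF^{\oa}$ is an exact endofunctor of this subcategory, every $W\otimes M$ is also projective there. Decomposing $\cL(M,-)$ by isotypic components under the adjoint action yields
$$\cL(M,Y)\;\cong\;\bigoplus_{W}\Hom_{\fg_{\oa}}(W\otimes M,Y)\otimes W,$$
with $W$ running over a complete set of simples in $\cF^{\oa}$, which shows that $\Phi$ is exact on $\langle\cF^{\oa}\otimes M\rangle$. On the other hand $\Psi$ is right exact and its image lies in $\langle\cF^{\oa}\otimes M\rangle$ (checked on the generators $V\otimes U/UI\mapsto V\otimes\Ind M$). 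Hence $\Phi\Psi$ is right exact on $\cB(I)$, and applying the five lemma to a two-step projective presentation $V_2\otimes U/UI\to V_1\otimes U/UI\to N\to 0$ together with the already-established unit isomorphism on each $V_i\otimes U/UI$ forces $\eta_N$ to be an isomorphism for every $N\in\cB(I)$.

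Full faithfulness of $\Psi$ is then immediate. Essential surjectivity onto $\coker(\cF\otimes\Ind M)$ follows by lifting a presentation $V_2\otimes\Ind M\to V_1\otimes\Ind M\to X\to 0$ to a morphism $V_2\otimes U/UI\to V_1\otimes U/UI$ using full faithfulness and taking its cokernel in $\cB(I)$; right exactness of $\Psi$ gives $\Psi N\cong X$, and the triangle identities then make $\epsilon_X$ an isomorphism on the essential image, confirming that $\cL(M,-)$ is the quasi-inverse. The most delicate point, in my view, is the preliminary verification that the $(U,U_0)$-bimodules $V\otimes U/UI$ (with the diagonal left $U$-action via the coproduct and right $U_0$-multiplication on the second factor) are indeed projective in $\cB(I)$ and generate $\cB(I)$ under finitely generated subquotients, so that projective presentations are available for arbitrary $N\in\cB(I)$; once this is in place, the rest is a formal adjunction argument built on the computation $\cL(M,\Ind M)=U/UI$ and the exactness consequence of (b).
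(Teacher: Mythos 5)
Your route is essentially the paper's: both arguments turn on the same two pillars, namely the identification of $U/UI$ with $\cL(M,\Ind M)$ forced by assumption (a) (your explicit PBW computation carries exactly the content of the isomorphism the paper extracts from (a) at the bottom of its commuting diagram), and the use of assumption (b) to make $\cL(M,-)$, equivalently $\Hom_{U_0}(M,\Res(-))$, exact on subquotients of modules $V\otimes\Ind M$. The only stylistic difference is that you perform the final presentation bookkeeping by hand, via the unit/counit of the adjunction $\Psi\dashv\cL(M,-)$ and the five lemma, where the paper cites \cite[Proposition~5.10]{BeGe}; that substitution is legitimate and changes nothing essential.

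However, the step you explicitly defer at the end --- that the bimodules $V\otimes U/UI$ with $V\in\cF$ are projective in $\cB(I)$ and that every object of $\cB(I)$ admits a presentation by finite direct sums of such bimodules --- is not an afterthought: your five-lemma argument for $\eta_N$, the claim that the image of $\Psi$ lands in $\coker(\cF\otimes\Ind M)$, and your essential-surjectivity argument all presuppose it, so as written the proof has a hole precisely there. It is in fact the easy part, and it is where the paper's proof begins. From $\Hom_{U-U_0}(U/UI,X)\cong\Hom_{U_0}(\mC,X^{\ad})$ and adjunction one gets $\Hom_{U-U_0}(V\otimes U/UI,X)\cong\Hom_{U_0}(\Res V,X^{\ad})$ for all $X\in\cB(I)$, and this is exact in $X$ because $X^{\ad}$ is by definition a direct sum of objects of the semisimple category $\cF^{\oa}$; hence $V\otimes U/UI$ is projective. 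For generation, a finite set of bimodule generators of $X$ lies in some finite dimensional $\ad$-submodule $N\in\cF^{\oa}$ of $X^{\ad}$, and choosing $\hat N\in\cF$ whose restriction contains $N$ as a direct summand (for instance $\hat N=\Ind N$) yields an epimorphism $\hat N\otimes U/UI\tto X$; note the generator must be built from a genuine $\fg$-module $\hat N\in\cF$, not just a $\fg_{\oa}$-module, which is why this induction step is needed. With this paragraph inserted, your argument is complete and coincides with the paper's proof.
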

\begin{proof}
First we will identify the projective modules in~$\cB(I)$. Subsequently, we study the action of~$\Psi$ on the category of projective modules  in~$\cB(I)$. The result then follows from~\cite{BeGe}.

{\em Projective modules in~$\cB(I)$.} We have the submodule $UI$ of the $(U,U_0)$-module $U$, and write $U_I:=U/UI\cong U\otimes_{U_0}U_0/I$.
 For all $X \in \cB(I)$, we have an isomorphism
\begin{equation}\label{eqHomUU0}\Hom_{U-U_0}(U_I,  X)\;\stackrel{\sim}{\to}\; \Hom_{U_0}(\mC,X^{\ad}),\quad \alpha\mapsto \alpha(1+UI),\end{equation}
where we interpret $\Hom_{U_0}(\mC,X^{\ad})$ as the subspace of~$X$ consisting of invariants for the adjoint $\fg_{\oa}$-action.
	Consequently, using adjunction, it follows that, for~$V\in \cF$, we have that~$V\otimes U_I$ is a projective module in~$\cB(I)$, by
	$$\text{Hom}_{U-U_0}(V\otimes U_I, X)\cong  \text{Hom}_{U_0}(\Res V, X^{\text{ad}}),\qquad\mbox{for all $X \in \cB(I)$.}$$
		For an arbitrary~$X$ in~$\cB(I)$, let $\{m_1,\ldots ,m_{\ell} \} \subseteq X$ be a set of~$U-U_0$-generators for~$X$. Then there exists $N\in\cF^{\oa}$ with~$N \subseteq X^{\text{ad}}$ such that~$\{m_1,\ldots ,m_{\ell} \}\subseteq N$. We take a module $\hat{N}\in\cF$ such that~$\Res\hat{N}$ contains $N$ as a direct summand, for instance $\hat{N}=\Ind N$. Then we have a canonical $U$-$U_0$-epimorphism from $\hat{N}\otimes U_I$ to X. 
	In conclusion, $\cB(I)$ has enough projective objects and the category of projective objects is $\add(\cF\otimes \Ind M)$.

	{\em The functor $\Psi$ on projective modules.} 
	Now we turn to the right exact functor
	$$\Psi=-\otimes_{U_0}M:\;\cB(I)\,\to\, U\mbox{-mod}.$$
	For all $N\in\cF$, we have
	$$\Psi(N\otimes U_I)\;\cong\; N\otimes_{\mC} U_I\otimes_{U_0}M\;\cong\; N\otimes\Ind M.$$
	Now we show that~$\Psi$ acts fully faithfully on the category projective modules in~$\cB(I)$. For this we can construct the following commuting diagram for arbitrary~$E,F\in\cF$:
	     $$\xymatrix{\text{Hom}_{U-U_0}( E\otimes U_I,  F\otimes U_I)\ar[r]^{\Psi}\ar[d]^{\simeq}& \text{Hom}_{U}(E\otimes \Ind M,F\otimes \Ind M)\ar[d]^{\simeq} \\
      \text{Hom}_{U-U_0}(  U_I,  (E^\ast)^t\otimes F\otimes U_I)\ar[r]^{\Psi}\ar[d]^{\simeq}& \text{Hom}_{U}( \Ind M,(E^\ast)^t\otimes F\otimes\Ind M)\ar[d]^{\simeq}\\
     \text{Hom}_{U_0-U_0}( U_0/I, (E^\ast)^t\otimes F\otimes \Lambda\fg_{\ob}\otimes  U_0/I)\ar[r]\ar[d]^{\simeq} &\text{Hom}_{U_0}(M,(E^\ast)^t\otimes F\otimes\Lambda\fg_{\ob}\otimes M)\ar[d]^{\simeq}\\
     \text{Hom}_{U_0}( \mC, (E^\ast)^t\otimes F\otimes \Lambda\fg_{\ob}\otimes  (U_0/I)^{\ad})\ar[r] &\text{Hom}_{U_0}(\mC,(E^\ast)^t\otimes F\otimes\Lambda\fg_{\ob}\otimes \cL(M,M)^{\ad})\\
     }$$
	The composition of the lower two vertical arrows on the left is equation~\eqref{eqHomUU0}. The isomorphism in the lowest vertical arrow on the right follows from the fact that~$M$ is the only module in the equation which might not be finite dimensional. We leave as an exercise that the diagram commutes and that the lowest horizontal arrow is an isomorphism as a consequence of assumption (a).

	{\em Application of~\cite[Proposition~5.10]{BeGe}.} 
	For any $N\in\cF$, the functor
	$$\Hom_{U}(N\otimes\Ind M,-)\;\cong\;\Hom_{U_0}(M,\Res (N^* \otimes -))\;: \langle \cF\otimes\Ind M\rangle\to \mC\mbox{-mod}$$
	is exact by assumption (b). Hence the direct summands of modules in~$\cF\otimes \Ind M$ are projective in~$\langle \cF\otimes\Ind M\rangle$. 
	The previous paragraph of the proof also shows that~$\Psi$ is actually a functor between the abelian categories $\cB(I)$ and $\langle \cF\otimes\Ind M\rangle$. (Note that the image of~$\Psi$ is actually even contained in~$\coker(\cF\otimes\Ind M)$.) It thus follows from \cite[Proposition~5.10]{BeGe} that~$\Psi$ yields an equivalence between $\cB(I)$ and the category of modules in~$\langle \cF\otimes\Ind M\rangle$ presented by modules in~$\add(\cF\otimes \Ind M)$, where the latter category is by definition  $\coker(\cF\otimes\Ind M)$.
	To conclude the proof we observe that it follows easily from ordinary bimodule adjunction that~$\cL(M,-)$ is right adjoint to~$\Psi$.   \end{proof}

    Now we choose a Cartan and Borel subalgebra $\fh_{\oa}\subset\fb_{\oa}$ in~$\fg_{\oa}$ and consider the corresponding BGG category~$\cO$ of~$\fg$-modules. For the remainder of this subsection, we fix a regular and dominant weight $\lambda\in\fh_{\oa}^\ast$ and set $\Lambda=\lambda+\Upsilon$. Denote by~$I_\lambda\subset U_0$ the ideal generated by the maximal ideal $m_\lambda=\Ann_{Z(\fg_{\oa})}M^{\oa}_\lambda$ in the centre~$Z(\fg_{\oa})$. Hence
     \begin{equation}\label{AnnEqDuflo}\Ann_{U_0}M^{\oa}_{w\cdot\lambda}\;=\;I_\lambda,\qquad\mbox{for all $w\in W$},
     \end{equation}  see e.g.~\cite[Theorem~10.6]{Hu08}. We set $\cB_\lambda=\cB(I_\lambda)$.

    \begin{cor} \label{CorEqiv2}
    We have mutually inverse equivalences $-\otimes_{U_0}M^{\oa}_\lambda$ and $\cL(M^{\oa}_\lambda,-)$ between 
    $\cB_\lambda$ and $\cO_\Lambda.$
    \end{cor}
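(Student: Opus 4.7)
The plan is to apply Theorem~\ref{ThmMS1} with $M = M^{\oa}_\lambda$ and then to identify the resulting target category $\coker(\cF\otimes\Ind M^{\oa}_\lambda)$ with $\cO_\Lambda$. For hypothesis~(a), equation~\eqref{AnnEqDuflo} gives $\Ann_{U_0}(M^{\oa}_\lambda) = I_\lambda$, so the task is to check that the canonical inclusion $U_0/I_\lambda \hookrightarrow \cL(M^{\oa}_\lambda,M^{\oa}_\lambda)$ is an isomorphism. This is the classical Kostant problem for a regular dominant integral Verma module, and is established in \cite{BeGe}.

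For hypothesis~(b), the module $M^{\oa}_\lambda$ is projective in $\cO^{\oa}_\Lambda$ by classical BGG theory for regular dominant weights. Since $\Lambda = \lambda+\Upsilon$ is stable under addition of weights of modules in $\cF^{\oa}$, we have $\cF^{\oa}\otimes M^{\oa}_\lambda \subset \cO^{\oa}_\Lambda$, and consequently $\langle\cF^{\oa}\otimes M^{\oa}_\lambda\rangle$ is an abelian subcategory of $\cO^{\oa}_\Lambda$ containing $M^{\oa}_\lambda$; projectivity in the larger category therefore descends to the subcategory. Theorem~\ref{ThmMS1} then produces the mutually inverse equivalences $-\otimes_{U_0}M^{\oa}_\lambda$ and $\cL(M^{\oa}_\lambda,-)$ between $\cB_\lambda = \cB(I_\lambda)$ and $\coker(\cF\otimes\Ind M^{\oa}_\lambda)$. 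To identify this target with $\cO_\Lambda$, observe first that $\coker(\cF\otimes\Ind M^{\oa}_\lambda) \subset \cO_\Lambda$ since $\Ind M^{\oa}_\lambda \in \cO_\Lambda$ and $\cO_\Lambda$ is closed under tensoring with objects of $\cF$ and under cokernels; conversely, Corollary~\ref{ReDomi}(i) identifies the projective modules in $\cO_\Lambda$ precisely with $\add(\cF\otimes\Ind M^{\oa}_\lambda)$, and since $\cO_\Lambda$ is a direct summand of $\cO$ and thus inherits enough projectives, every object of $\cO_\Lambda$ admits a two-step projective presentation by modules in $\add(\cF\otimes\Ind M^{\oa}_\lambda)$, placing it in the cokernel category.

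The only nontrivial external input beyond Theorem~\ref{ThmMS1} and Corollary~\ref{ReDomi} is the verification of Kostant's problem in hypothesis~(a); for regular dominant integral weights this is a classical result, so no real obstacle remains.
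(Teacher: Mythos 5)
Your proposal is correct and follows essentially the same route as the paper: apply Theorem~\ref{ThmMS1} to $M=M^{\oa}_\lambda$, verify (a) via Kostant's theorem that $\iota_{M^{\oa}_\lambda}$ is an isomorphism for Verma modules (the paper cites \cite[Section~13.4]{Hu08}) and (b) via projectivity of $M^{\oa}_\lambda$ in $\cO^{\oa}_\Lambda$ for dominant $\lambda$, then identify $\coker(\cF\otimes\Ind M^{\oa}_\lambda)=\cO_\Lambda$ using Corollary~\ref{ReDomi}(i). One small remark: $\lambda$ here need not be integral, so drop the word ``integral''; this is harmless since Kostant's result holds for arbitrary Verma modules.
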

    \begin{proof} 
    	The proof will be an application of Theorem \ref{ThmMS1} with~$M = M_\la^\oa$. We first recall that the monomorphism $\iota_{M_\la^\oa}$ in \eqref{KSProb} is always an isomorphism for~$\mf g_\oa$-Verma module $M_\la^\oa$, see e.g. \cite[Section~13.4]{Hu08}. Therefore condition \eqref{ThmMS1a} in Theorem \ref{ThmMS1} is satisfied. 
    	
    	Since $\mc O$ is closed under tensoring with finite-dimensional modules, we have $\langle \cF^{\oa}\otimes M_\la^\oo\rangle \subseteq \mc O_{\Lambda}$. Therefore condition \eqref{ThmMS1b} of Theorem \ref{ThmMS1} is satisfied. As a consequence of Theorem \ref{ThmMS1}, we have an equivalence 
	$$-\otimes_{U_0}M^{\oa}_\lambda:\;\cB_\lambda\,\stackrel{\sim}{\to}\, \coker(\cF\otimes \Ind M_\la^\oa)$$ with inverse $\cL(M^{\oa}_\lambda,-).$
    	By Corollary~\ref{ReDomi}(i), we have $\coker(\cF\otimes \text{Ind} M^\oa_\lambda)=\cO_{\Lambda}$.
    \end{proof}

 \begin{cor}\label{sameAnn} For simple module $L\in\cO_{\Lambda}$, we have $\emph{Ann}_U L = \emph{LAnn}_U \mc L(M_\la^\oa, L).$
 \end{cor}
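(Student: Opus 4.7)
The plan is to apply Corollary~\ref{CorEqiv2} to reduce the claim to an immediate check on the level of definitions. Since $L\in \cO_\Lambda$, the equivalence gives
\[
\cL(M_\la^\oa,L)\in\cB_\la\quad\text{and}\quad \cL(M_\la^\oa,L)\otimes_{U_0}M_\la^\oa\;\cong\;L
\]
as left $U$-modules, via the evaluation map $\phi\otimes m\mapsto \phi(m)$. Set $N:=\cL(M_\la^\oa,L)$. The proof is then to verify the two inclusions separately, and neither presents a genuine obstacle.

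For the inclusion $\LAnn_U N\subseteq \Ann_U L$, I would note that if $u\in U$ satisfies $uN=0$, then in the tensor realisation $L\cong N\otimes_{U_0}M_\la^\oa$ we have $u\cdot(\phi\otimes m)=(u\phi)\otimes m=0$ for every simple tensor, so $u\in \Ann_U L$. For the reverse inclusion $\Ann_U L\subseteq \LAnn_U N$, recall that $N$ is a sub-bimodule of $\Hom_\mC(M_\la^\oa,L)$ on which the left $U$-action is the pointwise action $(u\phi)(m)=u\cdot\phi(m)$. Hence if $u\in\Ann_U L$, then for every $\phi\in N$ and $m\in M_\la^\oa$ we have $(u\phi)(m)=u\cdot \phi(m)=0$, so $u\phi=0$ and $u\in\LAnn_U N$.

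The main point I expect to pause on is simply making sure the identifications are compatible: that the canonical $U$-module isomorphism $N\otimes_{U_0}M_\la^\oa\stackrel{\sim}{\to}L$ coming from the inverse equivalence of Corollary~\ref{CorEqiv2} is indeed implemented by evaluation (so that the first inclusion goes through), and that the $U$-action on $\cL(M_\la^\oa,L)$ used in the definition of $\LAnn_U$ is the pointwise action on $\Hom_\mC(M_\la^\oa,L)$ (so that the second inclusion goes through). Both of these are built into the definitions given in the excerpt, so the verification is formal; note also that simplicity of $L$ is not actually used in the argument, although it is the relevant case for applications to the primitive spectrum.
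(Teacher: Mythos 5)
Your proof is correct and is essentially the argument the paper invokes by citing \cite[Corollary 4.4(1)]{Co16} mutatis mutandis: the inclusion $\Ann_U L\subseteq \LAnn_U\cL(M_\la^\oa,L)$ is immediate from the pointwise left $U$-action on $\cL(M_\la^\oa,L)\subseteq\Hom_{\mC}(M_\la^\oa,L)$, and the reverse inclusion follows from the isomorphism $\cL(M_\la^\oa,L)\otimes_{U_0}M_\la^\oa\cong L$ provided by Corollary~\ref{CorEqiv2} (for which any $U$-module isomorphism suffices, so your worry about it being realised by evaluation is not needed).
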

 \begin{proof}
 Mutatis mutandis \cite[Corollary 4.4(1)]{Co16}.
 \end{proof}
 
 The ideas behind the following two results go back to Vogan, see \cite{Vogan}.   
    
    \begin{lem} \label{LemEbU}
    	 Let $S \in \cB_\lambda$ be a simple bimodule. Then there is $V\in \mc F_\oa$ such that we have a monomorphism of~$(U,U_0)$-modules $$U/\emph{LAnn}_{U}S \hookrightarrow S \otimes V^\ast.$$
    \end{lem}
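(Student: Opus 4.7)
The plan is to imitate the classical Vogan-style construction: produce a finite-dimensional $\ad\fg_\oa$-stable subspace $V\subseteq S$ such that $S=V\cdot U_0$ as a right $U_0$-module, and then use the evaluation morphism $\phi:U\to S\otimes V^*$, $u\mapsto \sum_i(uv_i)\otimes v_i^*$, for a basis $\{v_i\}$ of $V$. Once $V$ has the right-$U_0$-generation property, the kernel of $\phi$ will coincide with $\LAnn_U S$ and the desired embedding drops out.

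First, I would select any finite-dimensional $\ad\fg_\oa$-stable $V_0\subseteq S$ that generates $S$ as a $(U,U_0)$-bimodule; such $V_0$ exists because $S\in\cB_\la$ is finitely generated and $S=S^{\ad}$ is a direct sum of modules in $\cF^{\oa}$. The identity $\ad(x)(v)=xv-vx\in V_0$ for $x\in\fg_\oa$, $v\in V_0$, together with an induction on PBW-degree, yields $U_0V_0=V_0U_0$, hence $S=UV_0U_0=UV_0$, so $V_0$ already generates $S$ as a left $U$-module. Next, enlarge $V_0$ to
\[
V\;:=\;\sum_{k=0}^{\dim\fg_\ob}\fg_\ob^{\,k}\, V_0\;\subseteq\; S,
\]
where $\fg_\ob^{\,k}\subseteq U$ denotes the span of $k$-fold products of elements of $\fg_\ob$. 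The derivation property of $\ad$ shows that each summand $\fg_\ob^{\,k}V_0$ is $\ad\fg_\oa$-stable, so $V$ is a finite-dimensional $\fg_\oa$-submodule of $S^{\ad}$, i.e.\ $V\in\cF^{\oa}$. Using the PBW decomposition $U=\Lambda(\fg_\ob)\cdot U_0$, any $u\in U$ admits an expression $u=\sum_I y_I u_{0,I}$ with $y_I$ in a PBW-basis of the $\Lambda(\fg_\ob)$-factor; then
\[
uV_0\;\subseteq\;\sum_I y_I\cdot(U_0V_0)\;=\;\sum_I y_I V_0\cdot U_0\;\subseteq\; V\cdot U_0,
\]
and combining with $S=UV_0$ gives $S=V\cdot U_0$.

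Now I would define $\phi:U\to S\otimes V^*$ by $\phi(u)=\sum_i(uv_i)\otimes v_i^*$ and verify that it is a morphism of $(U,U_0)$-modules. Left $U$-linearity is immediate; for right $U_0$-equivariance it suffices to check $\phi(ux)=\phi(u)\cdot x$ on $x\in\fg_\oa$, where the ``cross'' term coming from the right action of $\fg_\oa$ on $V^*$ (defined by $(\alpha\cdot x)(v)=\alpha(\ad(x)(v))$) cancels against the right action on $S$ via the identity $\ad(x)(v_j)=xv_j-v_jx$. Since the $v_i^*$ are linearly independent, $\ker\phi=\{u\in U\mid uV=0\}$, and the right-$U_0$-generation $S=V\cdot U_0$ gives
\[
u\in\ker\phi\;\Longleftrightarrow\; uV=0\;\Longleftrightarrow\; (uV)U_0=0\;\Longleftrightarrow\; uS=0\;\Longleftrightarrow\; u\in\LAnn_U S,
\]
so $\phi$ descends to the claimed monomorphism $U/\LAnn_U S\hookrightarrow S\otimes V^*$.

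The delicate step is the construction of $V$: an arbitrary $\ad$-stable bimodule generator $V_0$ only yields $S=UV_0$, which would force $\ker\phi$ to equal the \emph{largest two-sided ideal contained in the left ideal} $\{u\mid uV_0=0\}$, potentially strictly larger than $\LAnn_U S$. Enlarging $V_0$ by left PBW-products with $\fg_\ob$ converts left-$U$-generation into right-$U_0$-generation, which is exactly what makes the kernel computation pin $\ker\phi$ down to $\LAnn_U S$ on the nose. Incidentally, the simplicity hypothesis on $S$ is not used by this argument; the conclusion holds for every $S\in\cB_\la$.
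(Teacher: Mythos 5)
Your argument is correct, but it follows a genuinely different route from the paper. The paper's proof stays inside the machinery already built: via Corollary~\ref{CorEqiv2} it writes $S=\cL(M_\lambda^{\oa},L)$ for a simple $L\in\cO_\Lambda$, takes an epimorphism $V\otimes M_\lambda^{\oa}\twoheadrightarrow \Res L$ from \cite[Theorem~3.3]{BeGe}, sends $u\mapsto u\sigma$ into $\cL(V\otimes M_\lambda^{\oa},L)$, identifies the kernel with $\Ann_U L$, and then invokes Corollary~\ref{sameAnn} together with $\cL(V\otimes M_\lambda^{\oa},L)\cong S\otimes V^\ast$ from \cite[Theorem~6.1]{BeGe}. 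You instead argue intrinsically in the bimodule category: starting from a finite dimensional $\ad\,\fg_{\oa}$-stable generating subspace $V_0$ (exactly as in the proof of Theorem~\ref{ThmMS1}), you use $U_0V_0=V_0U_0$ and the PBW factorisation $U=\Lambda\fg_{\ob}\cdot U_0$ to enlarge $V_0$ to a finite dimensional $\ad$-stable $V$ with $S=V\cdot U_0$, and then the evaluation map $u\mapsto\sum_i uv_i\otimes v_i^\ast$ has kernel exactly $\LAnn_US$; your equivariance check correctly uses the diagonal right $U_0$-action on $S\otimes V^\ast$, which is indeed the structure the paper intends (it is the one transported from $\cL(V\otimes M_\lambda^{\oa},L)$ and used in the adjunctions of Proposition~\ref{Prim1}). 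Each approach has its merits: the paper's proof is shorter given Corollaries~\ref{CorEqiv2} and~\ref{sameAnn} and keeps the statement linked to the $\cO$-realisation of $S$ that is reused later, while yours is self-contained and more general --- it uses neither the simplicity of $S$, nor the dominance and regularity of $\lambda$, nor the equivalence with $\cO$, and so proves the embedding for every object of $\cB$; the conversion of bimodule generation into right $U_0$-generation by multiplying with $\Lambda\fg_{\ob}$ is the genuinely super ingredient that makes the classical Vogan-style matrix-coefficient argument go through here.
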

\begin{proof} 
By Corollary \ref{CorEqiv2}, there exists a simple module $L\in\cO$ such that~$S = \mc L(M_\la^\oa , L)$.
	By \cite[Theorem~3.3]{BeGe}, there exists $V\in \cF^{\oa}$ such that we have an epimorphism
	$$\sigma: V\otimes M_\lambda^{\oa}\tto \Res L$$
of~$\fg_{\oa}$-modules. When we interpret $\sigma\in \Hom_{\mC}(V\otimes M_\lambda^{\oa}, L)$, where the latter space is a $(U,U_0)$-module, $\sigma$ is $\ad_{\fg_{\oa}}$-invariant, so in particular an element of~$\cL(V\otimes M_\lambda^{\oa}, L)$. Moreover, it follows easily that we have a morphism of~$(U,U_0)$-modules
$$\Sigma:\,U\to \cL(V\otimes M_\lambda^{\oa}, L),\quad u\mapsto u\sigma. $$
Clearly, $\Ann_U L$ is contained in the kernel of~$\Sigma$. Since $\sigma$ is an epimorphism, it follows that the kernel is precisely $\Ann_U L$.
Hence we find a monomorphism of~$(U,U_0)$-modules
$$U/\LAnn_U S\;\stackrel{\sim}{\to}\; U/\Ann_U L \;\stackrel{\Sigma}{\hookrightarrow}\; \cL(V\otimes M_\lambda^{\oa}, L)\;\stackrel{\sim}{\to}\;S\otimes V^\ast,$$
where the left isomorphism is Corollary~\ref{sameAnn} and the right one is \cite[Theorem~6.1]{BeGe}.
\end{proof}
    \begin{prop} \label{Prim1}
    Consider two simple objects $S_1$ and $S_2$ in~$\cB_\lambda$. We have $\LAnn_U S_1\subset  \LAnn_US_2$ if and only if there exists a $V \in \cF_\oa$ such that~$S_2$ is a subquotient
of~$S_1\otimes V^\ast $.
    \end{prop}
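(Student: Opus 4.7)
\emph{Backward direction.} Since $V^* \in \cF^\oa$ is only a $\fg_\oa$-module (not a $\fg$-module), the left $U$-action on the bimodule $S_1 \otimes V^*$ is supported entirely on the first factor: $u\cdot(s\otimes v) = us \otimes v$. Thus $\LAnn_U(S_1 \otimes V^*) = \LAnn_U S_1$, and since left annihilators only grow on passing to subquotients, any subquotient $T$ (in particular $S_2$) satisfies $\LAnn_U T \supseteq \LAnn_U S_1$.

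\emph{Forward direction.} Assume $\LAnn_U S_1 \subseteq \LAnn_U S_2$. The plan is first to apply Lemma~\ref{LemEbU} to $S_1$, producing $V \in \cF^\oa$ and a bimodule monomorphism $\Sigma : U/\LAnn_U S_1 \hookrightarrow S_1 \otimes V^*$. The inclusion of two-sided ideals then yields a canonical bimodule surjection $U/\LAnn_U S_1 \twoheadrightarrow U/\LAnn_U S_2$, placing $U/\LAnn_U S_2$ inside $S_1 \otimes V^*$ as a subquotient. By transitivity of the subquotient relation, it remains to exhibit $S_2$ as a subquotient of $U/\LAnn_U S_2$ in $\cB_\lambda$.

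\emph{Main obstacle.} This final step is the hard part. I would pass through the equivalence $\Psi : \cB_\lambda \cong \cO_\Lambda$ of Corollary~\ref{CorEqiv2}, write $L_2 := \Psi(S_2)$, and use Corollary~\ref{sameAnn} (giving $\LAnn_U S_2 = \Ann_U L_2$) together with the right-exactness of $-\otimes_{U_0} M_\lambda^\oa$ to identify $\Psi(U/\LAnn_U S_2) \cong M_\lambda / \Ann_U(L_2) \cdot M_\lambda$. The claim thereby reduces to showing that $L_2$ appears as a composition factor of this maximal quotient of $M_\lambda$ annihilated by $\Ann_U L_2$. My plan is to combine Corollary~\ref{ReDomi}(i) — which realises the projective cover of $L_2$ as a direct summand of $F \otimes M_\lambda$ for some $F \in \cF$ — with a Jacobson-density argument in the spirit of Vogan~\cite{Vogan}, now adapted to the $(U, U_0)$-bimodule setting. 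The subtle point will be extracting $L_2$ itself, and not merely some other simple bimodule in $\cB_\lambda$ with the same left annihilator; this will likely require enlarging $V$ in the application of Lemma~\ref{LemEbU} so that $S_1 \otimes V^*$ captures the block of $L_2$ within $\cO_\Lambda$.
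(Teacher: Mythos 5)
Your backward direction is fine and coincides with the paper's: the left $U$-action on $S_1\otimes V^\ast$ lives on the first factor, so $\LAnn_U(S_1\otimes V^\ast)=\LAnn_U S_1$, and left annihilators can only grow when passing to subquotients.

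The forward direction, however, has a genuine gap. After embedding $U/J_1\hookrightarrow S_1\otimes V_1^\ast$ via Lemma~\ref{LemEbU} and noting that $U/J_2$ is then a subquotient of $S_1\otimes V_1^\ast$, you reduce everything to showing that $S_2$ is a subquotient of $U/J_2=U/\LAnn_U S_2$. This step is not only left unexecuted in your sketch; as stated it fails in general in the $(U,U_0)$-setting. Indeed, $U/J_2=U/\Ann_U L_2$ embeds into $\cL(\Res L_2,L_2)$, so the right $Z(\fg_{\oa})$-action on any of its composition factors is governed by the central characters occurring in $\Res L_2$; on the other hand $S_2=\cL(M^{\oa}_\lambda,L_2)$ is killed on the right by $I_\lambda$, i.e.\ carries the central character of $M^{\oa}_\lambda$. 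For a simple $L_2\in\cO_\Lambda$ whose restriction has no $\fg_{\oa}$-constituent with highest weight in $W\cdot\lambda$ (the generic situation), $S_2$ therefore cannot occur in $U/J_2$ at all, and no Jacobson-density argument inside $U/J_2$ can produce it: the right $U_0$-structure is simply wrong. Your closing remark about ``enlarging $V$'' points in the right direction, but the mechanism is missing. The paper's proof (Vogan's trick) applies Lemma~\ref{LemEbU} a \emph{second} time, to $S_2$, yielding $V_2\in\cF^{\oa}$ with $U/J_2\hookrightarrow S_2\otimes V_2^\ast$; composing with the surjection $U/J_1\tto U/J_2$ gives $0\neq \Hom(U/J_1,S_2\otimes V_2^\ast)\cong\Hom(U/J_1\otimes V_2^t,S_2)$, so by simplicity $S_2$ is a quotient of $U/J_1\otimes V_2^t$, which in turn embeds into $S_1\otimes V_1^\ast\otimes V_2^t$. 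Hence $S_2$ is a subquotient of $S_1\otimes V^\ast$ with $V=V_1\otimes(V_2^\ast)^t$; the extra factor $V_2^t$ is exactly what repairs the right module structure that your reduction loses.
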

     \begin{proof} 
     	First we assume that~$[S_1\otimes V^\ast : S_2] \neq 0$, for some $V\in \mc F_\oa$. Then 
     	$$\text{LAnn}_US_1 = \text{LAnn}_U(S_1\otimes V^\ast) \subset \text{LAnn}_U S_2,$$ as desired.

     	Next we set $J_i : = \LAnn S_i$ for~$i=1,2$, and assume that~$J_1\subset  J_2$.     	  
     	By Lemma \ref{LemEbU}, there are $V_i\in \mc F_\oa$ such that~$U/J_i \hookrightarrow S_i \otimes V_i^\ast$ for~$i=1,2$. We shall show that~$V:=V_1 \otimes (V_2^\ast)^t$ is the desired $V\in \cF^{\oa}$.
     	By the fact that~$J_1\subset  J_2$ and $U/J_2 \hookrightarrow S_2 \otimes V_2^\ast$, we have $$0\neq \text{Hom}(U/J_2, S_2\otimes V_2^\ast) \subset \text{Hom}(U/J_1, S_2\otimes V_2^\ast)\cong \text{Hom}(U/J_1 \otimes V_2^t, S_2),$$
     	which implies that~$S_2$ is a quotient of~$U/J_1 \otimes V_2^t$. Finally, by the fact that~$U/J_1 \hookrightarrow S_1 \otimes V_1^\ast$ we may conclude that
        $$U_1/J_1 \otimes V_2^t\hookrightarrow S_1\otimes (V_1^\ast\otimes V_2^t).$$ As a consequence, $S_2$ is indeed a subquotient of~$S_1\otimes V^{\ast}$.     	
    \end{proof}

\subsection{$U$-bimodules}
We denote by~$\cH$ the category of finitely generated~$U$-bimodules $X$, such that~$X^{\ad}$ is a direct sum of modules in~$\cF$. For a two-sided ideal $J\subset U$, we let $\cH(J)$ denote the full subcategory of~$\cH$ of bimodules $X$ such that~$XJ=0$. For any $U$-module $N$, we have a monomorphism
$$\iota_N:U/\Ann_U(N)\hookrightarrow \cL(N,N).$$

The following is a variation on~\cite[Theorem~3.1]{MS}. The difference with the statement {\it loc. cit.} is that~$M$ itself will not necessarily be projective in~$\coker(\cP\otimes M)$. An explicit example of this will be considered in Proposition~\ref{PropHCperi}. Of course, when $\fg=\fg_{\oa}$, we have $\cF=\cP$ and our result reduces to~\cite[Theorem~3.1]{MS}. 
\begin{thm}\label{ThmMS}
Take a $\fg$-module $M$ and set $I:=\Ann_U(M)$. If
\begin{enumerate}[(a)]
\item the monomorphism $\iota_M$ is an isomorphism, and
\item all modules in~$\cP\otimes M$ are projective in~$\langle \cP\otimes M\rangle$,
\end{enumerate}
then we have an equivalence of categories
$$\Theta=-\otimes_UM\,:\; \cH(I)\,\to\, \coker(\cP\otimes M),$$
with inverse $\cL(M,-)$.
\end{thm}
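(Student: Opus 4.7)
The plan is to follow the three-step structure of the proof of Theorem~\ref{ThmMS1}, adjusted to handle the fact that $M$ need not itself be projective in $\langle \cP \otimes M \rangle$.

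\textbf{Step 1: projectives in $\cH(I)$.} First I would observe that since $I$ is a two-sided ideal we have $UI = I$, so $U_I := U/UI$ coincides with $U/I$. The bimodule analogue of \eqref{eqHomUU0} yields an isomorphism
$$\Hom_{U\text{-}U}(V \otimes U_I,\,X) \;\cong\; \Hom_U(V,\, X^{\ad})$$
for any $V \in \cF$ and $X \in \cH(I)$, from which $V \otimes U_I$ is projective in $\cH(I)$ whenever $V \in \cP$. The same generating-set argument as in Theorem~\ref{ThmMS1} (pick a finite-dimensional $\fg$-submodule of $X$ containing a chosen set of bimodule generators, then cover it by a projective in $\cF$) then shows that $\cH(I)$ has enough projectives, and that the full subcategory of projectives is precisely $\add(\cP \otimes U_I)$.

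\textbf{Step 2: full faithfulness of $\Theta$ on projectives.} Since $\Theta(E \otimes U_I) = E \otimes M$, I would construct, for $E, F \in \cP$, the analogue of the commutative diagram from the proof of Theorem~\ref{ThmMS1}: absorb $E$ via left Hom-tensor adjunction, use $\Hom_{U\text{-}U}(U_I, -) \cong \Hom_U(\mC, (-)^{\ad})$ on the left column, and use $\Hom_U(M,-) \cong \Hom_U(\mC, \cL(M,-)^{\ad})$ together with $\cL(M, V \otimes M) \cong V \otimes \cL(M,M)$ for finite-dimensional $V$ on the right column. The bottom horizontal map is then
$$\Hom_U\bigl(\mC,\,(E^\ast)^t \otimes F \otimes (U/I)^{\ad}\bigr) \;\longrightarrow\; \Hom_U\bigl(\mC,\,(E^\ast)^t \otimes F \otimes \cL(M,M)^{\ad}\bigr),$$
which is induced by $\iota_M$ and hence an isomorphism by hypothesis~(a).

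\textbf{Step 3: invoking \cite[Proposition~5.10]{BeGe}.} Hypothesis~(b) is precisely the statement that $\cP \otimes M$ consists of projective objects in $\langle \cP \otimes M \rangle$, while $\Theta$ visibly takes values in $\coker(\cP \otimes M)$. Combined with Step~2 this places us in the setting of \cite[Proposition~5.10]{BeGe}, which upgrades the fully faithful equivalence on projectives to the desired equivalence $\Theta: \cH(I) \xrightarrow{\sim} \coker(\cP \otimes M)$; the quasi-inverse is identified with $\cL(M,-)$ via the standard bimodule tensor-Hom adjunction.

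The conceptual divergence from \cite[Theorem~3.1]{MS} sits in hypothesis~(b): since $\mC$ may fail to be projective in $\cF$ (as happens for $\mf{pe}(n)$), the module $M$ itself need not be a direct summand of any $P \otimes M$ with $P \in \cP$, so one can only expect projectivity inside $\langle \cP \otimes M \rangle$ rather than in the ambient category of all $U$-modules. The main technical obstacle I expect is verifying the commutativity of the diagram in Step~2 while correctly tracking the signs, the adjoint action, and the right-to-left conversion via the anti-involution $t$; each individual isomorphism is standard, but the bookkeeping is where the real work lies.
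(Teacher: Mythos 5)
Your proposal is correct and follows essentially the same route as the paper: the paper likewise identifies the projectives of $\cH(I)$ as $\add(\cP\otimes U/I)$ via the adjunction $\Hom_{U\text{-}U}(V\otimes U/I,X)\cong\Hom_U(V,X^{\ad})$ and the generator/projective-cover argument, notes that $\Theta$ sends these to $\add(\cP\otimes M)$, and then "proceeds as in Theorem~\ref{ThmMS1}" — i.e.\ exactly your Steps 2 and 3 with the commuting diagram, hypothesis (a) for the bottom isomorphism, hypothesis (b) for projectivity in $\langle\cP\otimes M\rangle$, and \cite[Proposition~5.10]{BeGe} plus the tensor-Hom adjunction identifying $\cL(M,-)$ as the inverse.
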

\begin{proof}
We first identify the projective modules in~$\cH(I)$ with direct summands of modules in~$\cP\otimes U/I$.
	If $V\in \cP$, then $V\otimes U/I$ is a projective module in~$\cH(I)$ since we have
	$$\text{Hom}_{U^2}(V\otimes U/I, X) =\text{Hom}_{U}(V, X^{\text{ad}}),\qquad\mbox{for all $X \in \cH(I)$.}$$
	Now we assume that~$X$ is a projective module in~$\cH(I)$. Let $\{m_1,\ldots ,m_{\ell} \} \subseteq X$ be a set of~$U^2$-generators for~$X$. Then there exists a finite-dimensional $\fg$-submodule $N \subseteq X^{\text{ad}}$ such that~$\{m_1,\ldots ,m_{\ell} \}\subseteq N$. Therefore we have a canonical $U^2$-homomorphism from $N\otimes U/I$ to~$X$ sending  $n\otimes u$ to~$nu$ for each $n\in N$ and $u \in U/I$. Consequently, $X$ is an epimorphic image of~$N\otimes U/I$. 
	Let $\hat N\in \mc P$ be the projective cover of~$N$ in~$\mc F$, then we have $\hat N\otimes U/I \twoheadrightarrow N\otimes U/I$. Therefore~$X$ is a direct summand of~$\hat N\otimes U/I$, as desired. 
	
	In particular it now follows that~$\Theta$ maps projective objects in~$\cH(I)$ to direct summands of modules in~$\cP\otimes M$. We can then proceed as in the proof of Theorem~\ref{ThmMS1}.
	\end{proof}

Now we consider the specific case of the periplectic Lie superalgebra $\fg=\mathfrak{pe}(n)$ and conclude this section by using Theorem~\ref{ThmMS} to provide an equivalence between category~$\cO$ for the periplectic Lie superalgebra and a category of~$\fg$-$\fg$ Harish-Chandra bimodules. For all other simple classical Lie superalgebras, such an equivalence was already obtained in~\cite[Theorem~4.1]{MaMe12}.
We use the notion of typicality for~$\mf{pe}(n)$ defined in~\cite[Section 5]{Se02}. Let $\la \in \mf h^*$ be a typical, dominant and regular weight, and set
$$\Lambda=\lambda+\Upsilon, \quad J_\lambda=\Ann_U M_\la,\quad \ov{\mc H}_\la= \cH(J_\lambda)\quad\mbox{and}\quad \ov{U}_\la = U_\la /J_\la.$$ We use the notation $\ov{\mc H}_\la$, rather than $\cH_\lambda$ to stress that we do not just impose a condition on the central character of the right action.

\begin{prop}\label{PropHCperi}
	The categories $ \mc O_{\Lambda}$ and $\ov{\mc H}_\la$ are equivalent. Mutually inverse equivalences are given by~$-\otimes_{\ov U_\la}M_\la$ and $\mc L(M_\la, -)$.
\end{prop}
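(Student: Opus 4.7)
The plan is to apply Theorem~\ref{ThmMS} directly with $M=M_\la$. If the two hypotheses of that theorem hold, it produces mutually inverse equivalences $-\otimes_U M_\la$ and $\cL(M_\la,-)$ between $\cH(J_\la)=\Hla$ and $\coker(\cP\otimes M_\la)$, so the remaining tasks are to recognise $\coker(\cP\otimes M_\la)$ as $\cO_\Lambda$ and to verify both hypotheses.

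Hypothesis (b) and the identification of cokernel categories both follow from Corollary~\ref{ReDomi}(ii), which asserts that for $\mf g$ of type~I and $\la$ regular and dominant, $\add(\cP\otimes M_\la)$ is precisely the full subcategory of projectives in $\cO_\Lambda$. Modules in $\cP\otimes M_\la$ are therefore projective in $\cO_\Lambda$, and hence in the smaller category $\langle\cP\otimes M_\la\rangle\subseteq \cO_\Lambda$, giving (b). The same corollary provides every object of $\cO_\Lambda$ with a two-step projective presentation whose terms lie in $\add(\cP\otimes M_\la)$, and these can be replaced by modules in $\cP\otimes M_\la$ after passing to direct summands; combined with the obvious containment $\cP\otimes M_\la\subseteq \cO_\Lambda$ (weights lie in $\Upsilon+\la+\Gamma=\Lambda$) and the closure of $\cO_\Lambda$ under cokernels, this yields $\coker(\cP\otimes M_\la)=\cO_\Lambda$.

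The main obstacle is hypothesis (a), namely the isomorphism $\iota_{M_\la}:\Ula\stackrel{\sim}{\to}\cL(M_\la,M_\la)$, i.e.\ a positive answer to Kostant's problem for $M_\la$. In the basic classical setting this can be reduced either to the projectivity of a dominant typical Verma module in $\cO$ or to a direct central character argument, but for $\mf{pe}(n)$ the centre of $U(\mf{pe}(n))$ is too small and $M_\la$ is in general not projective, so neither route is available. The rescue is the work of Serganova in \cite[Section~5]{Se02}, which establishes Kostant's problem for typical Verma modules over $\mf{pe}(n)$; applied to the typical weight $\la$, this is exactly what verifies~(a). This is also the reason why Theorem~\ref{ThmMS} was formulated with the weakened hypothesis (b) in place of the classical requirement that $M$ itself be projective, so that the present proposition follows by direct application.
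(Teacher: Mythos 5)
Your proposal is correct and follows the paper's own route: apply Theorem~\ref{ThmMS} with $M=M_\la$, use Corollary~\ref{ReDomi}(ii) to see that $\add(\cP\otimes M_\la)$ is the category of projectives in $\cO_\Lambda$ (which gives hypothesis (b) and $\coker(\cP\otimes M_\la)=\cO_\Lambda$), and invoke Serganova's solution of Kostant's problem for typical Verma modules over $\mathfrak{pe}(n)$ (the paper cites the proof of Theorem~5.7 in \cite{Se02}) for hypothesis (a). The extra details you supply on the projective presentations are exactly what the paper leaves implicit.
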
 \begin{proof}
	Observe that the category~$\cP\otimes M_\la$, and hence also $\langle \cP\otimes M_\la\rangle $, is contained in~$\cO_{\Lambda}$.
	By Corollary~\ref{ReDomi}(ii), the category of projective modules in~$\mc O_\Lambda$ is $\add(\cP\otimes M_\la)$. It follows in particular that
	$\coker(\cP\otimes M_\lambda)=\cO_{\Lambda}.$
	By  \cite[Proof of Theorem 5.7]{Se02}, the canonical monomorphism $\Ula \rightarrow \mc L(M_\la,M_\la)$ is an isomorphism. The claim thus follows from Theorem~\ref{ThmMS}.
\end{proof}

\section{Completion functors and the primitive spectrum}\label{SecPrim}

Duflo proved that every primitive ideal of a semisimple Lie algebra is an annihilator of a simple module in BGG category~$\cO$. In \cite{Mu92}, Musson proved, by building on work on finite ring extensions in~\cite{Le89}, that the analogue of this statement remains true for simple classical Lie superalgebras.
In Section~\ref{SecMus}, we observe that the methods in~\cite{Le89, Mu92} actually show that Duflo's result remains valid for arbitrary (not necessarily simple) classical Lie superalgebras.

To describe the primitive spectrum of~$U(\fg)$, one is thus left with the problem of determining all inclusions between annihilator ideals of simple modules in category~$\cO$. 
In Section~\ref{SecCom}, we will extend a result in~\cite{Co16} for {\em simple basic} classical Lie superalgebras, describing the primitive spectrum in terms of completion functors, to {\em all} classical Lie superalgebras. 
In Section~\ref{SecTwi1} we make the connection with twisting functors.


\subsection{Primitive ideals for classical Lie superalgebras} \label{SecMus} For a Lie superalgebra $\fg$, we refer to the annihilator ideals in~$U(\fg)$ of the simple ($\mZ_2$-graded) modules as its primitive ideals.
The following result, which holds for an arbitrary choice of Borel subalgebra $\fb_{\oa}\subset \fg_{\oa}$, is an immediate generalisation of Musson's theorem of~\cite[Section 2.2]{Mu92}. 
\begin{thm}\label{ThmPrim} Let $\mf g$ be a classical Lie superalgebra. Then every primitive ideal is an annihilator of a simple module in~$\cO$.
\end{thm}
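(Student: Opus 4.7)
The plan is to run Musson's argument from \cite[Section~2.2]{Mu92} essentially verbatim, only pausing to check that at no point is the simplicity of $\fg$ used; the whole argument goes through under the weaker hypothesis that $\fg$ is classical. I would organise it as follows.

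First, start with a primitive ideal $I=\Ann_U V$ for some simple $\fg$-module $V$. The PBW isomorphism $U\cong U_0\otimes \Lambda \fg_{\ob}$ exhibits $U$ as a free $U_0$-module of finite rank on both sides, and $U$ is generated as a $U_0$-bimodule by the finite-dimensional $\ad\,\fg_{\oa}$-submodule $\Lambda \fg_{\ob}$. Hence $U_0\hookrightarrow U$ is a finite normalising ring extension in the sense of \cite{Le89}. This is the only structural input needed, and it uses solely that $\fg_{\ob}$ is finite-dimensional and $\fg_{\oa}$-semisimple; nothing here requires $\fg$ to be simple.

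Second, apply Letzter's lying-over theorem in the form used by Musson: the contracted ideal $I\cap U_0$ is a finite intersection of primitive ideals of $U_0$, all of which share a common central character $\chi\colon Z(\fg_{\oa})\to \mC$. By Duflo's theorem for the reductive Lie algebra $\fg_{\oa}$, each such primitive ideal of $U_0$ is the annihilator of some simple highest weight module $L^{\oa}_{\mu}\in \cO^{\oa}$. Fix one such weight $\mu$.

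Third, produce a simple module in $\cO$ with the same annihilator as $V$. Because $Z(\fg_{\oa})$ acts on $\Res V$ through characters in the finite set determined by $I\cap U_0$, the $\fg_{\oa}$-module $\Res V$ admits $L^{\oa}_{\mu}$ as a subquotient for some $\mu$ in the relevant central-character class. Frobenius reciprocity for $(\Ind,\Res)$ then yields a nonzero $\fg$-homomorphism from an object of $\cO$ (an induced module built from a highest weight vector for $\mu$) to $V$. Since $V$ is simple it is a quotient of a module in $\cO$, and as a simple $\fg$-module $V$ is therefore isomorphic to some simple $L\in\cO$. Consequently $I=\Ann_U L$.

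The only step where one might worry about an obstacle is the passage from ``a simple $\fg_{\oa}$-subquotient of $\Res V$ of highest weight type'' to ``a highest weight vector in $V$ producing a map out of a module in $\cO$''; this is handled, as in \cite{Mu92}, by combining the finiteness of $U/U_0$ with Frobenius reciprocity, and it remains valid once $\fg_{\oa}$ is reductive and $\fg_{\ob}$ is finite-dimensional. Since none of these three steps invokes the simplicity of $\fg$, the theorem follows.
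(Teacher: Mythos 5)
There is a genuine gap, and it sits in your third step. You try to conclude that $V$ itself is (isomorphic to) a simple object of $\cO$: you claim that $\Res V$ has some $L^{\oa}_\mu$ as a subquotient, deduce via Frobenius reciprocity a nonzero map from an object of $\cO$ onto $V$, and conclude $V\in\cO$. This proves far too much and is false. The theorem does not assert that every simple $\fg$-module lies in $\cO$; it asserts only that its annihilator coincides with the annihilator of some, in general \emph{different}, simple module in $\cO$. Already for $\fg=\fg_{\oa}$ a reductive Lie algebra (a perfectly good classical Lie superalgebra with $\fg_{\ob}=0$) your argument would show that every simple module is a highest weight module, which fails for simple Whittaker modules or simple weight modules with infinite-dimensional weight spaces: these have the same annihilators as suitable simple highest weight modules (Duflo), but $\Res V=V$ has no subquotient in $\cO^{\oa}$ at all. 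Knowing that $\Ann_{U_0}$-type data of $\Res V$ matches that of some $L^{\oa}_\mu$ gives no subquotient of $\Res V$ isomorphic to $L^{\oa}_\mu$; and even if it did, Frobenius reciprocity requires an actual $\fb_{\oa}$-highest weight \emph{vector} in $V$ (a hom out of a Verma submodule), not merely a subquotient. (A smaller inaccuracy: the minimal primes over $I\cap U_0$ need not share a central character, since the composition factors of $\Res V$ are shifted by odd roots; but this is beside the main point.)

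The argument that actually works, and the one the paper uses, goes in the opposite direction: one induces up rather than restricting down and trying to lift. Letzter's theory for the finite extension $U_0\subset U$ (in the form $\Prim U=\bigcup_{I\in\Prim U_0}\{J\in\Spec U \mid J \text{ minimal over } \Ann_U(U/UI)\}$) combined with Duflo's theorem for $U_0$ shows that every primitive ideal of $U$ is minimal over $\Ann_U(\Ind L)$ for some simple $L\in\cO^{\oa}$. Since $\Res\Ind L\cong\Lambda\fg_{\ob}\otimes L$ has finite length, $\Ind L$ has finite length, and a prime minimal over the annihilator of a finite-length module is the annihilator of one of its composition factors (via the product-of-annihilators argument). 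Those composition factors lie in $\cO$ because $\Ind L$ does, which yields the theorem; at no point does one claim anything about the location of the original simple module $V$. Your steps one and two are in the right spirit (only reductivity of $\fg_{\oa}$ and finite-dimensionality and $\fg_{\oa}$-semisimplicity of $\fg_{\ob}$ are needed, so simplicity of $\fg$ is indeed irrelevant), but step three as written cannot be repaired without switching to this induced-module/minimal-prime argument.
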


We start by formulating some results of Letzter (see e.g.~\cite{Le89}) in the form that we will need. We will use the survey in~\cite[Section~7.6]{Mu12} as reference.
By `algebra', we mean a unital associative $\mZ_2$-graded algebra over $\mC$. By definition, the primitive ideals of an algebra $R$ are the annihilator ideals of the simple (left) modules. We say a set~$\bL$ of simple $R$-modules is Ann-complete if every primitive ideal is of the form $\Ann_RL$ for some $L\in \bL$. 

We will consider algebras $R,S$ with the following properties (see~\cite[Hypothesis~7.1.1]{Mu12}):
\begin{enumerate}[(a)]
\item $R$ is a finitely generated noetherian algebra of finite GK dimension;
\item $S$ contains $R$ (as graded subalgebra) and is finitely and freely generated as a left $R$-module;
\item $R$ is a direct summand of the right $R$-module $S$.
\end{enumerate}

\begin{prop}\label{PropLM}\cite{Le89, Mu12}
Consider algebras $R,S$ satisfying (a)-(c). Assume that we have an $\Ann$-complete set $\bL_R$ for~$R$ such that the $S$-module $\Ind^S_RL$ has finite length for all $L\in\bL_R$. Then the set
$$\bL_S:=\{K\mbox{ is a simple $S$-module with } [\Ind^S_RL:K]\not=0\mbox{ for some $L\in\bL_R$} \},$$
is an $\Ann$-complete set for~$S$.
\end{prop}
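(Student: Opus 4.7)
The plan is to adapt the standard Letzter--Musson argument \cite[Sections~7.5--7.6]{Mu12} to the present, slightly more general setting. Fix any primitive ideal $P$ of $S$ and write $P = \Ann_S K_0$ for some simple $S$-module $K_0$. We must exhibit $K \in \bL_S$ with $\Ann_S K = P$.

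The first step is to realise $K_0$ inside a module induced from~$R$. Under hypotheses~(a)--(c), which place the extension $R \subset S$ in the framework of~\cite[Hypothesis~7.1.1]{Mu12}, the usual Letzter--Musson argument produces a simple $R$-module $L$ such that $K_0$ appears as a composition factor of $\Ind^S_R L$. Setting $Q := \Ann_R L$, a primitive ideal of~$R$, the $\Ann$-completeness of $\bL_R$ then provides some $L' \in \bL_R$ with $\Ann_R L' = Q$.

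The crux is a ``replacement lemma'': if $L_1, L_2$ are simple $R$-modules with $\Ann_R L_1 = \Ann_R L_2$ and both $\Ind^S_R L_i$ have finite length, then the sets of primitive ideals
$$\{\Ann_S K \mid K \text{ a composition factor of } \Ind^S_R L_1\} \;=\; \{\Ann_S K \mid K \text{ a composition factor of } \Ind^S_R L_2\}$$
coincide. Applied to $L$ and $L'$, this yields a composition factor $K$ of $\Ind^S_R L'$ with $\Ann_S K = P$. Since $L' \in \bL_R$ and $\Ind^S_R L'$ has finite length by hypothesis, $K$ is a member of $\bL_S$ by definition, closing the argument.

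The main obstacle is the replacement lemma. Its proof, which is the technical heart of the Letzter--Musson analysis, relies on two facts: the two-sided annihilator in $S$ of the induced bimodule $S \otimes_R (R/Q)$ depends only on~$Q$, and the primitive ideals $\Ann_S K$ arising from composition factors of $\Ind^S_R L$ are precisely the primitives of $S$ minimal over this bimodule annihilator. Here~(b) guarantees that $S \otimes_R -$ is exact, while~(c) ensures that annihilators can be faithfully transferred between $R$ and~$S$; both conditions are essential.
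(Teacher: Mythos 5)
Your step 1 is where the argument breaks. What the Letzter--Musson machinery under hypotheses (a)--(c) actually provides is an \emph{ideal-theoretic} statement: every $P\in\Prim S$ is a prime \emph{minimal} over $\Ann_S(S/SQ)$ for some $Q\in\Prim R$, and $\Ann_S(S/SQ)=\Ann_S(\Ind^S_RL)$ whenever $Q=\Ann_RL$ (these are \cite[Corollary~7.6.14, Lemma~7.6.15]{Mu12}). It does \emph{not} produce a simple $R$-module $L$ with $K_0$ itself occurring as a composition factor of $\Ind^S_RL$; in this abstract generality $\Res^S_RK_0$ is only finitely generated, it need not have a simple submodule, and $\Ind^S_RL$ for an arbitrary simple $L$ need not have finite length (the hypothesis only covers $L\in\bL_R$). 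Moreover, even granting step 1, ``$K_0$ is a composition factor'' only gives $P\supseteq\Ann_S(\Ind^S_RL)$, whereas the argument needs the \emph{minimality} of $P$ over this ideal --- and that minimality is exactly what you have discarded.

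This defect propagates into your replacement lemma. From the two facts you cite one only gets that the \emph{minimal} primes over $\Ann_S(\Ind^S_RL_1)=\Ann_S(\Ind^S_RL_2)$ coincide; the annihilators of composition factors of a finite-length module are \emph{not} precisely the minimal primes over its annihilator (only the inclusion ``minimal primes $\subseteq$ composition-factor annihilators'' holds in general --- already for a Verma module over $\mathfrak{sl}_2$ the annihilator of its finite-dimensional quotient strictly contains the annihilator of the module, which is the annihilator of the other factor). So, without minimality of $P$, the transfer from $L$ to $L'$ fails, and the lemma as stated also requires finite length of $\Ind^S_RL$, which is not among the hypotheses. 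The paper's proof avoids both issues: using Ann-completeness of $\bL_R$ and the fact that $\Ann_S(S/SQ)$ depends only on $Q$, one writes $\Prim S$ as the union over $L\in\bL_R$ of the primes minimal over $\Ann_S(\Ind^S_RL)$, and then uses the finite-length hypothesis (needed only for $L\in\bL_R$) together with the elementary observation that a prime minimal over $\Ann_SM$, for $M$ of finite length, equals $\Ann_S$ of one of the composition factors of $M$ (since the ordered product of those annihilators lies in $\Ann_SM$). Rebuilding your argument along these lines, i.e.\ carrying the minimality through instead of the module-level realisation of $K_0$, closes the gap.
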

\begin{proof}
For an algebra $T$ we let $\Spec T$ denote the set of all (graded) prime ideals and $\Prim R\subset\Spec T$ the set of all primitive ideals. We note that in~\cite[Section~7]{Mu12} the notation GrSpec and GrPrim are used.
By \cite[Corollary~7.6.14]{Mu12}, we have
$$\Prim S\;=\; \bigcup_{I\in \Prim R}\{J\in \Spec S\,|\, J\mbox{ is minimal over }\Ann_S(S/SI)\}.$$
Here ``$J$ is minimal over $Q$'' for some ideal $Q$ and prime ideal $J$ means that~$J\supseteq Q$ and there is no $J'\in\Spec R$ with~$J\supsetneq J'\supseteq Q$.

Since $\bL_R$ is $\Ann$-complete, \cite[Lemma~7.6.15]{Mu12} implies
$$\Prim S\;=\; \bigcup_{L\in \bL_R}\{J\in \Spec S\,|\, J\mbox{ is minimal over }\Ann_S(\Ind^S_RL)\}.$$

Now let $M$ be an arbitrary~$S$-module with a finite filtration
$$0=M_k\subsetneq M_{k-1}\subsetneq \cdots\subsetneq M_1\subsetneq M_0=M,$$
with~$L_i:=M_{i-1}/M_{i}$ simple for~$1\le i\le k$.
If $J\in \Spec S$ is minimal over $\Ann_SM$, then $J=\Ann_SL_i$ for some $i$. This well-known property can be proved as follows. With $J_i:=\Ann_SL_i$, for all $1\le i\le k$, we have
$$J_{k}J_{k-1}\cdots J_2 J_1\subseteq \Ann_S(M)\subseteq J,$$
where we note that the order of the ideals in the left term is relevant. Since $J$ is prime, we have $J_i\subseteq J$, for some $1\le i\le k$. Since we also have $\Ann_SM\subseteq J_i$, the minimality of~$J$ implies $J=J_i$.

We thus find
$$\Prim S\;=\; \bigcup_{L\in \bL_R}\{\Ann_S K\,|\, K \mbox{ is a simple $S$-module with }\; [\Ind_R^SL:K]\not=0\},$$
which concludes the proof.
\end{proof}

\begin{proof}[Proof of Theorem~\ref{ThmPrim}]
The conditions (a)-(c) are satisfied for~$R=U(\fk_{\oa})$ and $S=U(\fk)$, with~$\fk$ an arbitrary finite dimensional Lie superalgebra $\fk$, see \cite[Corollary~6.4.6]{Mu12}. Moreover, by~\cite{Duflo} the set of simple modules in category~$\cO^{\oa}$ is $\Ann$-complete for~$U_0$. Finally $U(\fg)\otimes_{U(\fg_{\oa})}L$ is of finite length for any simple $L\in\cO^{\oa} $ since even $\Res \Ind L\cong \Lambda\fg_{\ob}\otimes L\in \cO^{\oa}$ is of finite length as a $\fg_{\oa}$-module.
 \end{proof}

 \subsection{Completion functors}\label{SecCom} Consider a classical Lie superalgebra $\fg$. For this subsection, we fix a regular and dominant weight $\lambda\in\fh_{\oa}^\ast$ and set $\Lambda=\lambda+\Upsilon$. 
 We define the Enright completion functor for~$s\in W^{\lambda}$ a simple reflection, see e.g. \cite[Section 2]{Jo82}, as
 $$G_s(-) := \mc L(M^\oa_{s \cdot \la}, -)\otimes_{U_\oa} M_\la^\oa:\; \mc O_{\Lambda} \rightarrow \mc O_{\Lambda}.$$
By \eqref{AnnEqDuflo}, it follows that~$G_s$ is a composition 
 $$\cO_\Lambda\;\to\;\cB_\lambda\;\stackrel{\sim}{\to}\; \cO_\Lambda,$$
 of~$\cL(M^\oa_{s \cdot \la}, -)$ with the equivalence in Corollary~\ref{CorEqiv2}. In particular, $G_s$ is well-defined. By definition, we also have
\begin{equation}\label{ResG}\Res\circ G_s\;\cong\; G_s\circ\Res,\end{equation}
 where we use the same notation $G_s$ for the classical completion functor on $\cO^{\oa}$.
 
 \begin{thm} \label{CoPI}
 Set $J_i:= \emph{Ann}_UL_{i}, $ with~$L_i\in\cO_{\Lambda}$ a simple module, for~$i=1,2$. Then $J_{1}\subseteq J_2$ if and only if $L_{2}$ is a subquotient of~$G_{s_1}G_{s_2}\cdots G_{s_k}L_{1}$, for some simple reflections $s_1, s_2 , \ldots, s_k \in W^{\lambda}$.
 \end{thm}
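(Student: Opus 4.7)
I plan to adapt Vogan's strategy~\cite{Vogan} (extended to the super setting in~\cite{Co16}) using the Harish--Chandra bimodule equivalence $\Psi = -\otimes_{U_0}M^{\oa}_\lambda$ of Corollary~\ref{CorEqiv2}. The implication $(\Leftarrow)$ is routine: the left $U$-action on $G_s L = \cL(M^{\oa}_{s\cdot\lambda}, L)\otimes_{U_0} M^{\oa}_\lambda$ factors through the action on $L$ (via $\cL(M^{\oa}_{s\cdot\lambda}, -)$), so $\Ann_U L \subseteq \Ann_U G_s L$; iterating, $J_1 \subseteq J_2$ whenever $L_2$ is a subquotient of $G_{s_1}\cdots G_{s_k}L_1$.

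For the converse, set $S_i := \cL(M^{\oa}_\lambda, L_i) \in \cB_\lambda$, so that $L_i = \Psi(S_i)$ and $J_i = \LAnn_U S_i$ by Corollary~\ref{sameAnn}. The assumption $J_1 \subseteq J_2$ together with Proposition~\ref{Prim1} produces $V \in \cF^{\oa}$ with $S_2$ a subquotient of $S_1 \otimes V^\ast$, so $L_2$ is a subquotient of $\Psi(S_1 \otimes V^\ast) \cong S_1 \otimes_{U_0}(V^\ast \otimes M^{\oa}_\lambda)$ by tensor associativity. Since $S_1$ is right-annihilated by $I_\lambda$, only the $\chi_\lambda$-isotypic component of $V^\ast \otimes M^{\oa}_\lambda$ contributes, and its classical Verma filtration with subquotients $\{M^{\oa}_{w\cdot\lambda}\}_{w\in W^\lambda}$ (multiplicities encoded by weight-space dimensions of $V^\ast$) implies that every composition factor of $\Psi(S_1 \otimes V^\ast)$ is a composition factor of some $S_1 \otimes_{U_0} M^{\oa}_{w\cdot\lambda}$.

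The main obstacle is then to identify the composition factors of each $S_1 \otimes_{U_0} M^{\oa}_{w\cdot\lambda}$ with those of an iterated completion $G_{s_1}\cdots G_{s_k}L_1$ for a reduced expression $w = s_1\cdots s_k$. I would proceed by induction on $\ell(w)$: the base case amounts to comparing $\cL(M^{\oa}_\lambda, L_1)\otimes_{U_0}M^{\oa}_{s\cdot\lambda}$ with $G_sL_1 = \cL(M^{\oa}_{s\cdot\lambda}, L_1)\otimes_{U_0}M^{\oa}_\lambda$, which I would establish via the embedding $M^{\oa}_{s\cdot\lambda}\hookrightarrow M^{\oa}_\lambda$ (available since $\lambda$ is dominant) together with suitable tensor--Hom manipulations. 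The compatibility~\eqref{ResG} of restriction with completion functors, combined with the classical Bernstein--Gelfand--Vogan identifications in $\cO^{\oa}$, then handles the remaining character-level verifications and carries the induction to the super setting.
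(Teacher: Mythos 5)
Your direction $(\Leftarrow)$ is correct and in fact more direct than the paper's: since the left $U$-action on $\cL(M^{\oa}_{s\cdot\lambda},L)$ is by postcomposition with the action on $L$, one gets $\Ann_U L\subseteq \Ann_U G_sL$, and annihilators only grow when passing to subquotients. The genuine gap is in the converse, precisely at your ``base case''. The functor $F:=\cL(M^{\oa}_\lambda,-)\otimes_{U_0}M^{\oa}_{s\cdot\lambda}$ you want to compare with $G_s=\cL(M^{\oa}_{s\cdot\lambda},-)\otimes_{U_0}M^{\oa}_\lambda$ is, by tensor--Hom adjunction together with Corollary~\ref{CorEqiv2} and \eqref{AnnEqDuflo}, \emph{left adjoint} to $G_s$ on $\cO_\Lambda$; hence by Theorem~\ref{TsGs} it is isomorphic to the twisting functor $T_s$, not to $G_s$. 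For a reductive Lie algebra this distinction is harmless at the level of composition factors, because $T_s$ and $G_s$ are conjugate by a simple-preserving duality; but for $\mathfrak{pe}(n)$ the duality $D_\sigma$ does \emph{not} preserve simple modules, and $T_sL$ and $G_sL$ have genuinely different composition factors. Concretely, for $\mathfrak{pe}(2)$ and $\lambda_2=\lambda_1+1$, Lemma~\ref{pe2TwIrr} gives $T_sL_\lambda=L_\lambda\neq 0$ while $G_sL_\lambda=D_\sigma T_s D_\sigma L_\lambda=0$. So no amount of ``tensor--Hom manipulation via $M^{\oa}_{s\cdot\lambda}\hookrightarrow M^{\oa}_\lambda$'' can establish that the composition factors of $\cL(M^{\oa}_\lambda,L_1)\otimes_{U_0}M^{\oa}_{s\cdot\lambda}$ match those of $G_sL_1$; at best your route would prove a twisting-functor variant, which for $\mathfrak{pe}(n)$ only recovers the theorem after conjugating by $D_\sigma$ (this is exactly Corollary~\ref{TwPI}), not the statement with completion functors as claimed.

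The missing idea, and what the paper does instead, is to stay entirely on the bimodule side and compare $G_s$ with the wall-crossing functor rather than with $-\otimes_{U_0}M^{\oa}_{s\cdot\lambda}$: one takes the short exact sequence $0\to M^{\oa}_\lambda\to\theta_sM^{\oa}_\lambda\to M^{\oa}_{s\cdot\lambda}\to 0$ from \cite{Hu08,BeGe} and applies the contravariant left-exact $\cL(-,L_1)$, obtaining an exact sequence which shows that $\cL(M^{\oa}_\lambda,G_sL_1)\cong\cL(M^{\oa}_{s\cdot\lambda},L_1)$ and $\cL(\theta_sM^{\oa}_\lambda,L_1)$ have the same multiplicities at every simple bimodule other than $\cL(M^{\oa}_\lambda,L_1)$; since $\cL(\theta_sM^{\oa}_\lambda,L_1)$ is a direct summand of $\cL(M^{\oa}_\lambda,L_1)\otimes V^\ast$, Proposition~\ref{Prim1} and Corollary~\ref{sameAnn} then let one run a chain argument: writing $-\otimes V^\ast$ as a summand of a composition $\theta_{s_1}\cdots\theta_{s_k}$ and threading through intermediate simple bimodules $L^1,\dots,L^{k-1}$ produces $[G_{s_k}\cdots G_{s_1}L_1:L_2]\neq 0$. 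Note also a secondary weakness of your reduction: even granting a correct one-reflection comparison, knowing only that $L_2$ occurs in $S_1\otimes_{U_0}M^{\oa}_{w\cdot\lambda}$ for a single $w$ does not by itself supply the intermediate simples needed to iterate single-reflection steps; the multiplicity chain through the $\theta_{s_i}$ is what makes the induction work.
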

\begin{proof} We adapt the proof of~\cite[Theorem 5.3]{Co16}. 
	In view of Corollaries \ref{CorEqiv2} and \ref{sameAnn} and Proposition \ref{Prim1}, it suffices to show that~$\mc L(M^\oa_{\la}, L_{2})$ is a subquotient of 
	$$\mc L(V\otimes M_\la^\oa, L_{1}) = \cL( M_\la^\oa, L_{1})\otimes V^* ,$$ for some $V\in \mc F^\oa$ 
 if and only if $\mc L(M^\oa_{\la}, L_{2})$ is a subquotient of~$$\cL(M_\lambda^{\oa},G_{s_1}G_{s_2}\cdots G_{s_k}L_{1}),$$ for some simple reflections $s_1,s_2,\ldots, s_k\in W^{\lambda}$.
 
By definition, projective functors on the category of $U_0$-modules which admit generalised central character are the direct summands of functor of the form $-\otimes V$ for some $V\in\cF^{\oa}$.
We will use the classification result of~\cite[Theorem~3.3]{BeGe}, which states in particular that the (isomorphism classes of) indecomposable projective functors on the block $\cO^{\oa}_\lambda$ containing $M_\lambda^{\oa}$ are in bijection with the set $W^{\lambda}$. We write $\theta_x$, with $x\in W^{\lambda}$, for the corresponding exact functor.
	By \cite[Chapter~7]{Hu08}, for every simple reflection $s\in W^{\lambda}$, we have a short exact sequence
	$$
	0\rightarrow M^\oa_{\la} \rightarrow \theta_{s}M_\la^\oa \rightarrow M_{s\cdot \la}^\oa \rightarrow 0.
	$$
 By applying $\mc L(-,L_{1})$, we have the exact sequence
$$
 0\rightarrow \mc L(M^\oa_{s\cdot \la}, L_{1}) \rightarrow \mc L(\theta_s M_{\la}^\oa, L_{1}) \rightarrow \mc L(M_\la^\oa, L_{1}),
$$
 which we can rewrite, using the equivalences in Corollary~\ref{CorEqiv2}, as
$$
 0\rightarrow \mc L(M^\oa_{ \la}, G_s L_{1}) \rightarrow \mc L(\theta_s M_{\la}^\oa, L_{1}) \rightarrow \mc L(M_\la^\oa, L_{1}).
$$
 Hence, for any simple $X\in \cB_\lambda$ different from $\cL(M_\la^\oa, L_{1})$, we have
 $$[\cL(M^\oa_{ \la}, G_s L_{1}): X]\;=\;[\cL(\theta_s M_{\la}^\oa, L_{1}):X].$$

	Therefore if $\cL(M_\lambda^{\oa},L_{2})$ is a subquotient of~$\cL(M_\lambda^{\oa},G_{s}L_{1} )$, then $\cL(M_\lambda^{\oa},L_{2})$ is a subquotient of~$\mc L(\theta_s M_{\la}^\oa, L_{1})$, which is a direct summand of~$$\mc L(V\otimes M^\oa_{\la}, L_{1}) = \mc L(M_\la^\oa, L_{1}) \otimes  V^* ,$$ for some $V\in\cF^{\oa}$. This proves one direction of the claim.

	Now we assume that~$\mc L(M_\la^\oa, L_{2})$ is a subquotient of~$\mc L(V\otimes M_\la^\oa, L_{1})$ for some $V \in \mc F^\oa$. Since $\mc L( M_\la^\oa, L_{1})\otimes V^\ast$ is itself a direct summand of~$\mc L(M_\la^\oa, L_{1})\theta_{s_1}\theta_{s_2}\cdots \theta_{s_k}$ for some indecomposable projective functors $\theta_{s_i}$ with simple reflections $s_1, s_2, \ldots s_k$, there are mutually different simple objects $L^1 \neq  \mc L(M_\la^\oa, L_{1}), L^2,\ldots , L^{k-1}\neq \mc L(M_\la^\oa, L_{2})$ such that~$$[\mc L(M_\la^\oa, L_{1})\theta_{s_1}:L^1]\neq 0, ~[L^1\theta_{s_2}:L^2]\neq 0, \ldots , [L^{k-1}\theta_{s_k}:\mc L(M_\la^\oa, L_{2})]\neq 0.$$
	Consequently, we have $[G_{s_k}\cdots G_{s_1}L_{1}: L_{2}]\neq 0$ as desired.
\end{proof}

Theorem~\ref{CoPI} describes the inclusions between annihilator ideals of simple modules in~$\cO_\Lambda$, for a fixed $\Lambda\in\fh^\ast/\Upsilon$. In order to describe the relation between simple modules in different such subcategories, now we define the completion functor for a simple reflection $s\in W$:
 $$G_s(-) := \mc L(M^\oa_{s \cdot \la}, -)\otimes_{U_\oa} M_\la^\oa:\; \mc O^{s\cdot\Lambda} \rightarrow \mc O_{\Lambda}.$$
 If we actually have $s\in W^{\lambda}$, then by assumption, $\Lambda=s\cdot\Lambda$ and $s$ is of course also simple as a reflection in~$W^{\lambda}$. Hence $G_s$ is then already studied above and the interesting case is therefore when $s\not\in W^{\lambda}$.
 \begin{prop}\label{PropNotInt}
 When $s\not\in W^{\lambda}$, we have an equivalence $G_s:\cO_{s\cdot \Lambda}\stackrel{\sim}{\to}\cO_{\Lambda}$ which maps simple modules to simple modules with the same annihilator ideal.
 \end{prop}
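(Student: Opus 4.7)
The plan is to realize $G_s$ as a composition of two instances of the equivalence from Corollary~\ref{CorEqiv2}, applied at $\la$ and at $s\cdot\la$ respectively. By Harish--Chandra's theorem the Verma modules $M_\la^\oa$ and $M_{s\cdot\la}^\oa$ share the same central character, hence $I_\la=I_{s\cdot\la}$ and therefore $\cB_\la=\cB_{s\cdot\la}$. Thus, provided Corollary~\ref{CorEqiv2} can be invoked at both $\la$ and $s\cdot\la$, one obtains mutually inverse equivalences $-\otimes_{U_0}M_\la^\oa:\cB_\la\stackrel{\sim}{\to}\cO_\Lambda$ and $\cL(M_{s\cdot\la}^\oa,-):\cO_{s\cdot\Lambda}\stackrel{\sim}{\to}\cB_\la$, whose composite is exactly $G_s$.

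The central technical point, and the main obstacle, is to verify that $s\cdot\la$ remains regular and dominant, so that Corollary~\ref{CorEqiv2} does apply at $s\cdot\la$. Regularity is immediate, since the $W$-stabilizer of $s\cdot\la+\rho_\oa=s(\la+\rho_\oa)$ is the conjugate by $s$ of the trivial stabilizer of $\la+\rho_\oa$. For dominance I would use the following structural observation: writing $\al$ for the simple root of $s$, the assumption $s\notin W^\la$ means $\al$ is not integral at $\la$, while $s$ permutes $\Phi_\oa^+\setminus\{\al\}$. Hence $\beta\mapsto s\beta$ restricts to a bijection between the integral positive roots at $\la$ and those at $s\cdot\la$. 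The desired dominance inequality $\langle s\cdot\la+\rho_\oa,\gamma^\vee\rangle\ge 0$ at an integral positive root $\gamma=s\beta$ of $s\cdot\la$ then translates, via $\langle s(\la+\rho_\oa),\gamma^\vee\rangle=\langle\la+\rho_\oa,\beta^\vee\rangle$, into exactly the corresponding inequality for $\la$, which holds by assumption.

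With the equivalence $G_s:\cO_{s\cdot\Lambda}\stackrel{\sim}{\to}\cO_\Lambda$ established, simples automatically go to simples. For the claim about annihilators, given a simple $L\in\cO_{s\cdot\Lambda}$, set $S:=\cL(M_{s\cdot\la}^\oa,L)\in\cB_\la$; then $G_sL=S\otimes_{U_0}M_\la^\oa$ by definition, and applying the inverse equivalence $\cL(M_\la^\oa,-)$ recovers $\cL(M_\la^\oa,G_sL)\cong S$. Two applications of Corollary~\ref{sameAnn} then yield $\Ann_U L=\LAnn_U S=\Ann_U(G_sL)$, completing the argument.
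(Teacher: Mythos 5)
Your proof is correct and takes essentially the same route as the paper: you realise $G_s$ as the composite $\cO_{s\cdot\Lambda}\to\cB_{s\cdot\lambda}=\cB_\lambda\to\cO_\Lambda$ of the equivalences of Corollary~\ref{CorEqiv2} (checking, in more detail than the paper does, that $s\cdot\lambda$ is again regular and dominant) and then apply Corollary~\ref{sameAnn} at both $\lambda$ and $s\cdot\lambda$ for the annihilator statement. One wording slip: $-\otimes_{U_0}M^{\oa}_\lambda$ and $\cL(M^{\oa}_{s\cdot\lambda},-)$ are not mutually inverse to each other (each is half of the inverse pair of Corollary~\ref{CorEqiv2} at $\lambda$, resp.\ $s\cdot\lambda$), but since you only use that each is an equivalence, the argument is unaffected.
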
 
 \begin{proof}
 Under the assumptions $s\cdot\lambda$ is also dominant. By Corollary~\ref{CorEqiv2} we thus find that~$G_s$ is a composition of equivalences
 $$\cO_{s\cdot\Lambda}\,\stackrel{\sim}{\to}\, \cB_{s\cdot\lambda}\,=\,\cB_{\lambda}\,\stackrel{\sim}{\to}\,\cO_{\Lambda},$$
 where the middle equation follows from equation~\eqref{AnnEqDuflo}.
 The claim about annihilator ideals follows from applying Corollary~\ref{sameAnn} twice.
 \end{proof}

 \subsection{Twisting functors}\label{SecTwi1} For basic classical Lie superalgebra, it is proved in~\cite[Theorem 5.5]{Co16} that the completion functors are right adjoint to the twisting functors and furthermore these functors are isomorphic up to conjugation with the duality functor of Example~\ref{ExInv}(a). This generalises the corresponding properties for reductive Lie algebras in~\cite[Theorem 4.1]{AS} and \cite[Theorem 3]{KM}.
In this subsection, we derive analogous results for $\mf{pe}(n)$. For this, we will introduce a new duality on the $\mc O$ for~$\mathfrak{pe}(n)$ which however does not preserves simple modules. First we recall the construction of twisting functors of~\cite{Ar}. 
 
  For a root $\beta \in \Phi$, we denote by~$\mf g_{\beta}$ the root space of~$\mf g$ associated with~$\beta$. Fix a simple root $\alpha \in \Phi_\oo^+$ and a non-zero root vector $X \in (\mathfrak{g}_{\bar{0}})_{-\alpha}$. Then we have the Ore localisation $U'_{\alpha}$ of~$U$ with respect to the set of powers of~$X$ since the adjoint action of~$X$ on $\mathfrak{g}$ is nilpotent. Now $X$ is not a zero divisor in~$U$, therefore~$U$ can be viewed as an associative subalgebra of~$U'_{\alpha}$. The quotient $U_\alpha:=U'_{\alpha}/U$ is thus a $U$-$U$-bimodule.
  Let $\varphi=\varphi_{\alpha}$ be an automorphism of~$\mathfrak{g}$ that maps $(\mathfrak{g}_{i})_{\beta}$ to  $(\mathfrak{g}_{i})_{s_{\alpha}(\beta)}$ for all simple roots $\beta$ and $i\in \{\bar{0}, \bar{1}\}$. Now we let $^{\varphi}U_{\alpha}$ be the bimodule obtained from $U_{\alpha}$ by twisting the left action of~$U$ by~$\varphi$. The twisting functor is then defined as
$$
 T_{s_{\alpha}}(-)= T_{\alpha}(-):=  ^{\varphi}U_{\alpha}\otimes - : \mathcal{O} \rightarrow \mathcal{O}.
$$
 We will use the same notation $T_\alpha$ for the twisting functor defined in the same way on $\cO$ for any subalgebra of~$\fg$ containing $\fg_{\oa}$. Also, we define $G_{\alpha}: = G_{s_{\alpha}}.$

 Motivated by Proposition~\ref{PropNotInt} and the corresponding results in~\cite[Section~5]{CoMa14} and \cite[Proposition~3.9]{CMW}, as well as for simplicity, we will restrict to integral blocks of category~$\cO$ for the remainder of this section. 
 
  \begin{thm} \label{TsGs} For $\mf g$ a classical Lie superalgebra of type I-0 with good involution $\sigma$ and simple $\alpha\in\Phi_{\oa}^+$, we have $D_\sigma\circ G_{\sigma^\ast(\alpha)}\circ D_{\sigma} \cong T_{\alpha}$ on $\mc O_\Upsilon $ and $G_{\alpha}$ is right adjoint to~$T_\alpha$.  \end{thm}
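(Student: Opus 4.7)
The plan is to adapt \cite[Theorem~5.5]{Co16}, which establishes the analogous statement for basic classical Lie superalgebras, to the present type I-0 setting. The key tools will be the Bernstein--Gelfand equivalence of Corollary~\ref{CorEqiv2}, the projective-generator description of Corollary~\ref{ReDomi}(ii), and reduction to the corresponding classical result for reductive Lie algebras \cite[Theorem~3]{KM} through the restriction functor $\Res:\cO\to\cO^{\oa}$.

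First I would establish three intertwining isomorphisms with $\Res$:
$$\Res\circ T_\alpha^{\fg}\;\cong\;T_\alpha^{\oa}\circ\Res,\qquad \Res\circ G_\alpha^{\fg}\;\cong\;G_\alpha^{\oa}\circ\Res,\qquad \Res\circ D_\sigma\;\cong\;D_{\sigma|_{\fg_{\oa}}}\circ\Res.$$
The first comes from a PBW-type argument that identifies the $U$-bimodule ${}^\varphi U_\alpha$ with a base change from ${}^\varphi(U_0)_\alpha$, valid because the root vector $X\in(\fg_{\oa})_{-\alpha}$ defining the Ore localisation lies in $U_0$ and its adjoint action on $\fg$ is nilpotent, so the localisation respects the decomposition $U\cong\Lambda\fg_{-1}\otimes U_0\otimes\Lambda\fg_1$. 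The second is equation~\eqref{ResG}. The third follows because $\sigma$ preserves $\fg_{\oa}$ by the good-involution hypothesis and $D_\sigma$ is defined purely via the weight decomposition.

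For the isomorphism $D_\sigma\circ G_{\sigma^\ast(\alpha)}\circ D_\sigma\cong T_\alpha$, I note that both sides are right-exact endofunctors of $\cO_\Upsilon$ commuting with tensoring by modules in $\cF$. By Corollary~\ref{ReDomi}(ii), each block $\cO_\Lambda$ with dominant regular representative $\mu$ is generated by modules of the form $V\otimes M_\mu$ with $V\in\cP$, so the natural isomorphism is determined by its value on $M_\mu$. Applying $\Res$ and invoking the three intertwining isomorphisms, the claim reduces to the classical assertion $D\circ G_{\sigma^\ast(\alpha)}^{\oa}\circ D\cong T_\alpha^{\oa}$ evaluated on $\Res M_\mu$, which is \cite[Theorem~3]{KM}. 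For the adjunction $(T_\alpha, G_\alpha)$, the standard tensor--Hom adjunction realises $\Hom_U({}^\varphi U_\alpha, -)$ as the right adjoint of $T_\alpha$; I would show that its restriction to $\cO_\Upsilon$ agrees with $G_\alpha$ by combining the classical adjunction of \cite[Theorem~4.1]{AS} with the intertwining with $\Res$ and the same projective-generator argument.

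The main obstacle in both parts is lifting the natural isomorphisms from $\fg_{\oa}$-equivariance back to $\fg$-equivariance on the original modules. I would handle this at the level of the underlying bimodules: when evaluated on $M_\mu$, both functors produce modules cyclically generated by distinguished highest-weight vectors, and any $\fg_{\oa}$-equivariant map respecting these vectors extends to a $\fg$-equivariant map because the supplementary $\fg_{-1}\oplus\fg_1$-action on both target modules is determined by the bimodule structures on ${}^\varphi U_\alpha$ and on $\cL(M^{\oa}_{s\cdot\lambda},M^{\oa}_\lambda)$, both of which are controlled by $U_0$ through the type I-0 grading. The cost of this lifting argument is that the good involution $\sigma$ for $\mathfrak{pe}(n)$ itself must be in hand — this is deferred to the construction promised in the opening paragraph of Section~\ref{SecTwi1}.
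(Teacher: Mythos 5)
Your outer skeleton (commutation with $-\otimes V$ for $V\in\cF$, the projective generator of Corollary~\ref{ReDomi}(ii), right exactness) is the same as the paper's, but the core step --- producing a $\fg$-module isomorphism $T_\alpha M_\mu\cong D_\sigma G_{\sigma^\ast(\alpha)} D_\sigma M_\mu$ --- has a genuine gap. Knowing that the two restrictions to $\fg_{\oa}$ agree (via the classical results of Arkhipov--Stroppel and Khomenko--Mazorchuk) does not yield a $\fg$-isomorphism, and your proposed lifting principle, that a $\fg_{\oa}$-equivariant map matching cyclic highest-weight vectors ``extends'' to a $\fg$-equivariant one because the $\fg_{-1}\oplus\fg_1$-action is ``determined by the bimodule structures'', is an assertion, not an argument: in general a $\fg_{\oa}$-isomorphism between two $\fg$-modules is not $\fg$-equivariant, and nothing in your proposal pins down the odd actions on the two target modules as equal (two highest weight modules with the same highest weight and the same character need not be isomorphic). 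A second, smaller inaccuracy: you cannot invoke \cite[Theorem~3]{KM} verbatim for $D_{\sigma|_{\fg_{\oa}}}\circ G_{\sigma^\ast(\alpha)}\circ D_{\sigma|_{\fg_{\oa}}}$, since $\sigma^\ast$ need not be the identity (for $\mathfrak{pe}(n)$ it is $-w_0$); one must first conjugate away the $\sigma^\ast$-twist, which is why the subscript changes from $\sigma^\ast(\alpha)$ to $\alpha$ after restriction.

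The paper closes exactly this lifting gap by a different device: it works over $\fg_{\geq 0}$ rather than over $\fg_{\oa}$. For $\zeta$ with $s_\alpha\cdot\zeta\le\zeta$, let $N_\zeta$ be the $\fg_0$-Verma module $M^{\oa}_\zeta$ with trivial $\fg_1$-action; one computes that both $T_\alpha N_\zeta$ and $D_\sigma G_{\sigma^\ast(\alpha)} D_\sigma N_\zeta$ restrict to $M^{\oa}_{s_\alpha\cdot\zeta}$ over $\fg_0$. Since $H_0$ acts on $M^{\oa}_{s_\alpha\cdot\zeta}$ by a single scalar while $\fg_1$ raises the $H_0$-eigenvalue, the $\fg_0$-action admits a \emph{unique} extension to a $\fg_{\geq 0}$-action (the trivial one), so the two modules coincide as $\fg_{\geq 0}$-modules; applying $\Ind^{\fg}_{\fg_{\ge 0}}$ identifies both $T_\alpha M_\zeta$ and $D_\sigma G_{\sigma^\ast(\alpha)} D_\sigma M_\zeta$ with the Verma module $M_{s_\alpha\cdot\zeta}$, and $\dim\End_{\fg_{\geq 0}}(M_\zeta)=1$ makes the comparison canonical on the one-object subcategory. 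From there the argument continues as you describe (tensoring with $\cF$, projectives for $\zeta$ dominant regular, right exactness), and the adjunction statement is handled by the same one-object reduction applied to $D_\sigma M_\zeta$. If you replace your ``extension of $\fg_{\oa}$-maps'' step by this uniqueness-of-extension argument over $\fg_{\geq 0}$, your proof becomes essentially the paper's.
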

\begin{proof}
      We set $\beta:=\sigma^\ast(\alpha)$ and $D=D_\sigma$.
      First, we will interpret $D$ as a functor acting between $\cO(\fg_{\ge 0},\fb_{\oa})$ and $\cO(\sigma(\fg_{\ge 0}),\fb_{\oa})$ and $G_\beta$ as an endofunctor of~$\cO(\sigma(\fg_{\ge 0}),\fb_{\oa})$. Note that~$\fg_{\ge 0}$ and $\sigma(\fg_{\ge 0})$ are also classical Lie superalgebras of type I-0.

   Fix an arbitrary~$\zeta \in \Upsilon$ with~$s_\alpha\cdot\zeta\le \zeta$ and consider the $\fg_{\ge 0}$-module $N_\zeta$, which is the $\fg_{\oa}$-Verma module $M_\zeta^\oo$ with trivial $\fg_{1}$-action. By definition of twisting functors and \cite[Section~2]{AS} we have
 $$\Res_{\mf g_{ 0}}^{\mf g_{\geq 0}}(T_{\alpha}  N_\zeta) \cong  T_{\alpha} (M^\oo_\zeta)\cong M^{\oa}_{s_{\alpha}\cdot\zeta}.$$
 On the other hand, we have by~\cite[Theorem 4.1]{AS} and \cite[Theorem 3]{KM} and equation~\eqref{ResG}
   $$\Res_{\mf g_0}^{\mf g_{\geq 0}} D\circ G_\beta\circ D  (N_\zeta) \cong  (\cdot)^\vee \circ G_{\alpha}\circ (\cdot)^\vee (M^\oo_\zeta)\cong M^{\oa}_{s_{\alpha}\cdot\zeta}.$$
 Since there is only one structure of a $\fg_{\ge 0}$-module on $M^{\oa}_{s_{\alpha} \cdot\zeta}$ which extends the $\fg_{0}$-action, we have
that~$T_\alpha N_\zeta$ and $DG_\beta D N_\zeta$ are isomorphic
  as $\fg_{\ge 0}$-modules.

Now we turn to~$\fg$-modules. We have
$$DG_\beta D(M_\zeta)\cong \Ind^{\fg}_{\fg_{\ge 0}} DG_\beta D (N_\zeta)\cong \Ind^{\fg}_{\fg_{\ge 0}} T_\alpha N_\zeta \cong T_\alpha M_{\zeta}.$$   
Since $\text{dim}\text{End}_{\mf g_{\geq 0}}(M_{\zeta}) =1$, it follows that~$D\circ G_{\beta}\circ D \cong T_{\alpha}$ when restricted to the full subcategory of~$\cO_\Upsilon$ with one module $M_\zeta$.

 By 
 \cite[Lemma~5.9]{CoMa14} twisting functors commute, as functors, with functors of the form $-\otimes V$ with~$V\in\cF$. By construction, the same is true for completion functors. It thus follows that~$D\circ G_{\beta}\circ D $ and $T_{\alpha}$ are isomorphic on the category~$\cF\otimes M_\zeta$. Now we take $\zeta$ dominant and regular. By Corollary~\ref{ReDomi}(ii), we have that~$\add(\cP\otimes M_\zeta)$ is the category of projective modules in~$\cO_{\Upsilon}$.
Hence $D\circ G_{\beta}\circ D $ and $T_{\alpha}$ are isomorphic on the category of projective modules.  Since they are both right exact, they are isomorphic on the entire category~$\cO_{\Upsilon}$.

Similarly, we can prove that the right adjoint of~$T_\alpha$ is isomorphic to~$G_\alpha$ on the category with one object $DM_\zeta$, from which the result on $\cO_\Upsilon$ follows.
\end{proof}

 To apply this result to the periplectic Lie superalgebra, we need some preparation. Define the anti-involution $\sigma$ on $\mf{gl}(n|n)$ by 
 $$\sigma(E_{ij}) : = (-1)^{|i|(|j|+1)}E_{2n+1-j,~ 2n+1-i},\quad\mbox{for~$1\leq i,j \leq 2n$, }$$
 where $|k|=0$ when $k\le n$ and $|k|=1$ when $k>n$.
 
 \begin{lem}\label{Lemspe}
 The anti-involution $\sigma$ restricts to a good involution of~$\fg=\mathfrak{pe}(n)\subset\mathfrak{gl}(n|n)$. Furthermore, we have $\sigma^\ast (\lambda)=-w_0(\lambda)$, for all $\lambda\in\fh^\ast$. In particular we have $$\sigma(\fg_{\alpha})\;=\; \fg_{w_0(\alpha)},\quad \mbox{for all $\alpha\in\Phi$}.$$ 
 \end{lem}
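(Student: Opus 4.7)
The plan is to verify the three assertions by explicit matrix calculation on the standard basis of $\mathfrak{gl}(n|n)$, and then read off the statement about root spaces from the general identity $\sigma(\fg_\alpha) = \fg_{-\sigma^{\ast}(\alpha)}$ recorded in the discussion preceding Example~\ref{ExInv}.

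First I would check that $\sigma$ is an anti-involution of $\mathfrak{gl}(n|n)$. The relation $\sigma^2 = \id$ follows from the parity identity $|2n+1-k|\equiv 1-|k|\pmod 2$, which makes the two signs picked up by iterating $\sigma$ combine to $(-1)^{|i|[(|j|+1)+(1-|j|)]} = (-1)^{2|i|} = 1$. The anti-derivation property $\sigma([X,Y]) = (-1)^{\overline{X}\cdot\overline{Y}}[\sigma(Y),\sigma(X)]$ is a case-by-case verification on matrix units using $[E_{ij},E_{kl}] = \delta_{jk}E_{il} - (-1)^{(|i|+|j|)(|k|+|l|)}\delta_{il}E_{kj}$, again invoking the same parity identity to match the signs on both sides.

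Next I would verify that $\sigma$ restricts to $\mathfrak{pe}(n)$ and satisfies the good-involution conditions. A direct computation on the even basis gives
\[
\sigma(e_{ij}) \;=\; -\,e_{n+1-i,\,n+1-j},
\]
which simultaneously shows that $\sigma$ stabilises $\mathfrak{gl}(n)\subseteq\mathfrak{pe}(n)_{\oa}$, that $\sigma(\fh_{\oa}) = \fh_{\oa}$ (specialise to $i=j$), and that $\sigma(\fn_{\oa}^+) = \fn_{\oa}^-$ (since $i<j$ forces $n+1-j < n+1-i$). Parallel computations on the spanning sets $\{E_{i,n+j} + E_{j,n+i}\}$ of $\fg_1$ and $\{E_{n+i,j} - E_{n+j,i}\}$ of $\fg_{-1}$ show that the signs in the definition of $\sigma$ are engineered precisely so that the symmetric block $b$ is sent to a symmetric block and the skew-symmetric block $c$ to a skew-symmetric block, so $\sigma(\mathfrak{pe}(n)) \subseteq \mathfrak{pe}(n)$; equality then follows from $\sigma^2 = \id$.

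Finally, from $\sigma(e_{jj}) = -e_{n+1-j,\,n+1-j}$ one reads off
\[
\sigma^{\ast}(\vare_i)(e_{jj}) \;=\; \vare_i(\sigma(e_{jj})) \;=\; -\delta_{i,\,n+1-j},
\]
so $\sigma^{\ast}(\vare_i) = -\vare_{n+1-i} = -w_0(\vare_i)$, and $\sigma^{\ast}(\lambda) = -w_0(\lambda)$ for all $\lambda\in\fh^\ast$ by linearity. The last assertion $\sigma(\fg_\alpha) = \fg_{w_0(\alpha)}$ is then immediate from the general formula $\sigma(\fg_\alpha) = \fg_{-\sigma^{\ast}(\alpha)}$. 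The only nontrivial bookkeeping is the anti-derivation check and the verification that $\sigma$ respects the symmetry and skew-symmetry constraints cutting $\mathfrak{pe}(n)$ out of $\mathfrak{gl}(n|n)$; everything else amounts to the observation that the index involution $i\mapsto n+1-i$ on $\{1,\dots,n\}$ implements the longest element $w_0 \in S_n$.
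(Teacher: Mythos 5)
Your proposal is correct and follows essentially the same route as the paper: a direct computation on the spanning sets $e_{ij}$, $E_{i,n+j}+E_{j,n+i}$ and $E_{n+i,j}-E_{n+j,i}$ showing $\sigma(e_{ij})=-e_{n+1-i,n+1-j}$ (hence stability of $\mathfrak{pe}(n)$, of $\fh_{\oa}$ and the exchange of $\fn_{\oa}^{\pm}$), reading off $\sigma^{\ast}(\vare_i)=-\vare_{n+1-i}$ from $\sigma(e_{ii})$, and deducing the root-space statement from the general identity $\sigma(\fg_\alpha)=\fg_{-\sigma^{\ast}(\alpha)}$. The only difference is that you additionally verify that $\sigma$ is an anti-involution of $\mathfrak{gl}(n|n)$, a routine check the paper takes for granted.
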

 \begin{proof}
 Let $1\leq i,j \leq n$, and recall that we set $e_{ij}: = E_{ij} -E_{n+j,n+i}\in \mf{pe}(n)_\oa$. Furthermore, we define $e_{ij}^{(1)}: = E_{i,n+j}+E_{j,n+i}\in \mf{pe}(n)_{1}$ and $e_{ij}^{(-1)}: = E_{n+i,j} -E_{n+j,i}\in \mf{pe}(n)_{-1}$. Therefore~$\sigma$ preserves $\mf{pe}(n)$ and restricts to the following anti-automorphism of~$\mf{pe}(n)$:
 \begin{align}
 &\sigma(e_{ij}) = -e_{n+1-i, n+1-j}, ~
 \sigma(e_{ij}^{(k)}) =k e_{n+1-j, n+1-i}^{(k)},
 \end{align}
 for~$1\leq i,j\leq n$ and $k =-1, 1$. The description of~$\sigma^\ast$ follows from $\sigma(e_{ii})=-e_{n+1-i,n+1-i}$. 
 \end{proof}

  Since we have $\mathfrak{pe}(n)_{\oa}=\mathfrak{gl}(n)$, we have the canonical ordering $\{\alpha_i\,|\,1\le i\le n-1\}$ of simple roots in~$\Phi_{\oa}^+$. For simplicity, we write $T_i: = T_{s_i},$ for simple reflection $s_1, s_2, \ldots, s_{n-1}.$ Similarly, we denote completion functors by $G_1, G_2, \ldots , G_{n-1}$. We denote  the longest element in~$ W=S_n$ by~$w_0$ .
 
 As a special case of Theorem~\ref{TsGs}, we find the following.
   \begin{thm}\label{TsGsp} If $\mf g=\mf{pe}(n)$ and $\sigma$ as in Lemma~\ref{Lemspe}. Then $D_\sigma\circ T_{i}\circ D_\sigma \cong G_{n-i}$ on $\mc O_\Upsilon $ and $G_i$ is right adjoint to~$T_i$, for each $i=1,2,\ldots ,n-1$.
  \end{thm}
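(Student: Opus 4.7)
The plan is to derive Theorem \ref{TsGsp} as a direct specialisation of the general result Theorem \ref{TsGs}. By Lemma \ref{Lemspe}, the anti-involution $\sigma$ restricts to a good involution of $\mathfrak{pe}(n)$, and the latter is a classical Lie superalgebra of type I-0 by Section \ref{subsect::pen}. Hence all hypotheses of Theorem \ref{TsGs} are satisfied for $\mathfrak{g} = \mathfrak{pe}(n)$. The only nontrivial step is identifying, for each simple root $\alpha_i$ of $\mathfrak{pe}(n)_{\overline{0}} = \mathfrak{gl}(n)$, the image $\sigma^\ast(\alpha_i)$ as another simple root of $\mathfrak{gl}(n)$.

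For this identification I would use the formula $\sigma^\ast(\lambda) = -w_0(\lambda)$ provided by Lemma \ref{Lemspe}, where $w_0 \in W = S_n$ is the longest element acting on $\mathfrak{h}^\ast$ by $\varepsilon_j \mapsto \varepsilon_{n+1-j}$. For $\alpha_i = \varepsilon_i - \varepsilon_{i+1}$ one immediately computes
\[
\sigma^\ast(\alpha_i) \;=\; -w_0(\varepsilon_i - \varepsilon_{i+1}) \;=\; -(\varepsilon_{n+1-i} - \varepsilon_{n-i}) \;=\; \varepsilon_{n-i} - \varepsilon_{n+1-i} \;=\; \alpha_{n-i},
\]
so that $G_{\sigma^\ast(\alpha_i)} = G_{n-i}$. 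Plugging this into Theorem \ref{TsGs} gives $D_\sigma \circ G_{n-i} \circ D_\sigma \cong T_i$ on $\mathcal{O}_\Upsilon$.

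To obtain the form of the statement in Theorem \ref{TsGsp}, I would then use that $D_\sigma$ is a contravariant auto-equivalence with $D_\sigma \circ D_\sigma \cong \mathrm{id}$, so composing both sides of the previous isomorphism with $D_\sigma$ on the left and right yields $D_\sigma \circ T_i \circ D_\sigma \cong G_{n-i}$, as required. The adjointness claim $G_i$ is right adjoint to $T_i$ is exactly the second half of Theorem \ref{TsGs} applied to $\alpha = \alpha_i$.

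I do not anticipate any genuine obstacle here: the combinatorial verification $\sigma^\ast(\alpha_i) = \alpha_{n-i}$ is the only computation, and it is forced by the explicit form of $\sigma$ on the Cartan given in Lemma \ref{Lemspe}. The substance of the theorem lies in the general result Theorem \ref{TsGs}, whose hypotheses (type I-0 and the existence of a good involution) have already been checked for $\mathfrak{pe}(n)$ earlier in the section.
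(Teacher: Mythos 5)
Your proposal is correct and follows exactly the paper's route: Theorem~\ref{TsGsp} is stated in the paper as an immediate special case of Theorem~\ref{TsGs}, and your only added work — the computation $\sigma^\ast(\alpha_i)=-w_0(\alpha_i)=\alpha_{n-i}$ from Lemma~\ref{Lemspe} and the rearrangement using $D_\sigma\circ D_\sigma\cong\mathrm{id}$ (valid since $\sigma$ is an anti-involution and modules in $\cO$ have finite dimensional weight spaces) — is precisely the routine verification the paper leaves implicit.
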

 
The following is a consequence of Theorems \ref{CoPI} and \ref{TsGsp}:

 \begin{cor} \label{TwPI} Consider $\mf g = \mf{pe}(n)$. 
	Let $\la_1, \la_2 \in \Upsilon$ and set $J_i:= \emph{Ann}_UL_{\la_i}, $ for~$i=1,2$. Then $J_{1}\subseteq J_2$ if and only if $D_\sigma L_{\la_2}$ is a subquotient of~$T_{s_1}T_{s_2}\cdots T_{s_k}D_\sigma L_{\la_1}$, for some simple reflections $s_1, s_2 , \ldots, s_k \in W$.
\end{cor}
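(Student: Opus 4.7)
The plan is to deduce this corollary by transporting the criterion of Theorem~\ref{CoPI} across the duality $D_\sigma$. First I would fix any regular and dominant integral weight $\mu\in\Upsilon$ (which exists); since $\la_1,\la_2\in\Upsilon=\mu+\Upsilon$, both simple modules $L_{\la_i}$ lie in $\cO_\Upsilon$, and because $w\cdot\mu-\mu\in\Gamma$ for every $w\in W$ when $\mu$ is integral, we have $W^\mu=W$. Theorem~\ref{CoPI} then asserts that $J_1\subseteq J_2$ holds if and only if there exist simple reflections $s_{j_1},\ldots,s_{j_k}\in W$ such that $L_{\la_2}$ is a subquotient of $G_{j_1}G_{j_2}\cdots G_{j_k}L_{\la_1}$.

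Next I would translate completion functors into twisting functors via Theorem~\ref{TsGsp}. Since $\sigma^2=\id$ on $\mf{pe}(n)$, one has $D_\sigma^{2}\cong\id$ as endofunctors of $\cO_\Upsilon$, and the isomorphism $D_\sigma\circ T_i\circ D_\sigma\cong G_{n-i}$ rearranges (by composing with $D_\sigma$ on the right) to the intertwining relation
\[
D_\sigma\circ G_j \;\cong\; T_{n-j}\circ D_\sigma, \qquad j=1,\ldots,n-1.
\]
Iterating this relation $k$ times produces
\[
D_\sigma\bigl(G_{j_1}G_{j_2}\cdots G_{j_k}L_{\la_1}\bigr)\;\cong\; T_{n-j_1}T_{n-j_2}\cdots T_{n-j_k}\bigl(D_\sigma L_{\la_1}\bigr).
\]

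To close the argument, I would use that $D_\sigma$ is a contravariant auto-equivalence of $\cO_\Upsilon$ and therefore sends subquotients to subquotients. Applying $D_\sigma$ to the characterisation supplied by Theorem~\ref{CoPI} converts it to the statement that $D_\sigma L_{\la_2}$ is a subquotient of $T_{n-j_1}\cdots T_{n-j_k}D_\sigma L_{\la_1}$; relabeling $s_i:=s_{n-j_i}$ yields the form in the corollary, and the reverse implication is obtained by applying $D_\sigma$ a second time and invoking $D_\sigma^{2}\cong\id$. There is no substantial obstacle beyond careful bookkeeping of the index shift $j\mapsto n-j$ induced by $\sigma^{\ast}=-w_0$ (cf.\ Lemma~\ref{Lemspe}) and the contravariance of $D_\sigma$ when moving between the two compositions of functors.
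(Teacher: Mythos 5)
Your proposal is correct and follows exactly the route the paper intends: the paper derives Corollary~\ref{TwPI} directly from Theorems~\ref{CoPI} and~\ref{TsGsp}, and your argument simply fills in the routine details (taking $\Lambda=\Upsilon$ so that $W^\lambda=W$, rewriting each $G_j$ as $D_\sigma T_{n-j}D_\sigma$ using $D_\sigma^2\cong\id$, and transporting subquotient statements through the contravariant equivalence $D_\sigma$). The index bookkeeping $j\mapsto n-j$ is harmless since it permutes the simple reflections, so nothing essential is missing.
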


\section{Category~$\cO$ for the periplectic Lie superalgebra}\label{SecO}

In this section, we study the BGG category~$\mc O$ over $\mf g = \mf{pe}(n)$. In fact, we will work with~$\cO_{red}$ for an unspecified choice of~$\pi:\fh^\ast\to\mZ_2$ as in Section \ref{SecDefO}, but simply write $\cO$. We use all notation and conventions from Section~\ref{SecPrel}.
The values $\rho_1,\rho_{-1},\omega$ introduced in Section \ref{typeI0} are given by
$$\omega : = \vare_1 +\vare_2 +\cdots +\vare_n,\quad\rho_1 =  \frac{(1+n)}{2}\omega\quad\text{and}\quad\rho_{-1} = \frac{(1-n)}{2}\omega.$$

\subsection{BGG reciprocity} 


\begin{lem} \label{BGGlem}
	If $N \in \mc O$ has a Verma flag, we have
	$$(N: M_\mu) = \emph{dimHom}_{\mc O}(N, M_{\mu}^{\vee}).$$

\end{lem}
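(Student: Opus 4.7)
The plan is to follow the classical BGG reciprocity strategy: establish the Hom formula $\dim\Hom_{\cO}(M_\lambda,M_\mu^\vee) = \delta_{\lambda,\mu}$ together with the vanishing $\mathrm{Ext}^1_{\cO}(M_\lambda,M_\mu^\vee) = 0$, then induct on the length of a Verma flag of $N$. The inductive step applies $\Hom_{\cO}(-,M_\mu^\vee)$ to a short exact sequence $0 \to N' \to N \to M_{\lambda_k}\to 0$ extracted from such a flag: the $\mathrm{Ext}^1$-vanishing ensures the long exact sequence collapses to $\dim\Hom(N,M_\mu^\vee) = \dim\Hom(N',M_\mu^\vee) + \delta_{\lambda_k,\mu}$, closing the induction.

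Both ingredients I would derive by Frobenius reciprocity along the chain $\fg_0 \subset \fg_{\leq 0} \subset \fg$. Since $\fg_{-1}$ is abelian (a feature of every type I-0 superalgebra, including $\mf{pe}(n)$), the PBW theorem yields a $\fg_{\leq 0}$-module isomorphism $\Res^\fg_{\fg_{\leq 0}} M_\lambda \cong \Ind_{\fg_0}^{\fg_{\leq 0}} M^\oa_\lambda$. Using the $(\Res,\Coind)$ and $(\Ind,\Res)$ adjunctions in turn,
\begin{align*}
\Hom_\fg(M_\lambda,M_\mu^\vee) \;\cong\; \Hom_{\fg_{\leq 0}}(\Res M_\lambda,(M^\oa_\mu)^\vee) \;\cong\; \Hom_{\fg_0}(M^\oa_\lambda,(M^\oa_\mu)^\vee) \;=\; \delta_{\lambda,\mu}\C,
\end{align*}
where the last equality is the standard Hom formula in $\cO^\oa$.

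For the $\mathrm{Ext}^1$-vanishing I would upgrade the same chain of adjunctions to derived functors. PBW shows that $U(\fg)$ is free as a left $U(\fg_{\leq 0})$-module and $U(\fg_{\leq 0})$ free over $U(\fg_0)$, so both $\Coind_{\fg_{\leq 0}}^\fg$ and $\Ind_{\fg_0}^{\fg_{\leq 0}}$ are exact; being right (resp.\ left) adjoint to the exact functor $\Res$, they preserve injectives (resp.\ projectives). Applying them to an injective resolution of $(M^\oa_\mu)^\vee$ and a projective resolution of $M^\oa_\lambda$, one obtains $\mathrm{Ext}^1_\fg(M_\lambda,M_\mu^\vee) \cong \mathrm{Ext}^1_{\fg_0}(M^\oa_\lambda,(M^\oa_\mu)^\vee)$, which vanishes since $\mathrm{Ext}^{>0}$ between a Verma and a dual Verma is zero in the highest-weight category $\cO^\oa$; passing from $\mathrm{Ext}^1_\fg$ to $\mathrm{Ext}^1_{\cO}$ is free of charge since $\cO$ is closed under extensions. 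The main delicacy in the argument is this derived Frobenius reciprocity bookkeeping — verifying exactness and preservation of projectives/injectives at each step — so that the Ext calculation indeed descends all the way from $\fg$ to $\fg_0$. An alternative via filtrations of $M_\mu^\vee$ by ``dual-Kac-type'' subquotients combined with character comparisons should also work, but the homological route above is shorter and more robust.
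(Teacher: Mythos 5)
Your proposal is correct and follows essentially the same route as the paper: the same chain of (derived) Frobenius reciprocity isomorphisms along $\fg_0\subset\fg_{\leq 0}\subset\fg$, using $\Res^{\fg}_{\fg_{\leq 0}}M_\lambda\cong\Ind^{\fg_{\leq 0}}_{\fg_0}M^{\oa}_\lambda$ to reduce the Hom and $\mathrm{Ext}^1$ computations to the standard formulas in $\cO^{\oa}$, followed by induction on the length of the Verma flag. The paper merely computes all $\mathrm{Ext}^i_{\fg}(M_\lambda,M_\mu^\vee)$ at once and leaves the inductive step implicit, whereas you spell out the $i=0,1$ cases and the long exact sequence explicitly.
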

\begin{proof}
		For given $\la, \mu\in \h^*$ and $i\ge 0$, we have the following calculation
	\begin{align*}
	&\text{Ext}^i_{\mf g}(M_\la, M_{\mu}^{\vee})\cong \text{Ext}^i_{\mf g_{\leq 0}}(\text{Res}_{\mf g_{\leq 0}}^{\mf g}M_\la, (M^\oa_\mu)^{\vee}) \\
	&\cong \text{Ext}^i_{\mf g_{\leq 0}}(\text{Ind}^{\mf g_{\leq 0}}_{\mf g_0}M^\oa_\la, (M^\oa_\mu)^{\vee}) \hskip 0.5cm \text{ (since $\text{Res}_{\mf g_{\leq 0}}^{\mf g}\text{Ind}^{\mf g}_{\mf g_{\geq 0}}M_\la^\oa \cong \text{Ind}^{\mf g_{\leq 0}}_{\mf g_0}M^\oa_\la$)} \\
	&\cong \text{Ext}^i_{\mf g_{0}}(M^\oa_\la, (M^\oa_\mu)^{\vee}) \hskip 0.5cm \\
	& \cong \left\{ \begin{array}{ll} \C,\quad \text{if $\la = \mu$ and $i=0$,} \\
	0, \quad\text{otherwise},\end{array} \right.
	\end{align*} 
	where the last isomorphism is \cite[Theorems 3.3(d) and 6.12]{Hu08}.
	The conclusion then follows from induction on the length of Verma flags. 
\end{proof}

We have the following relation between characters of Verma and dual Verma modules.
\begin{lem} \label{LemChVInDV}
Let $S\subset\fh^\ast$ be the set of weights $\sum_{i=1}^n a_i\varepsilon_i$, where $a_i\in\{0,2\}$.
We have 
$$\emph{ch}M^{\vee}_\mu =\sum_{\kappa\in S}\emph{ch}M_{\mu-\kappa}=\sum_{\kappa\in S}\emph{ch}M_{\mu-2\omega+\kappa},\quad\mbox{for all $\mu\in\fh^\ast$}.$$\end{lem}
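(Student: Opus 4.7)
The strategy is a direct character computation: compute $\operatorname{ch} M_\mu$ and $\operatorname{ch} M^\vee_\mu$ separately using the PBW realisation of (co)induction from $\fg_{\ge 0}$, respectively $\fg_{\le 0}$, then compare the resulting products and expand the discrepancy.

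First I would compute $\operatorname{ch} M_\mu$. By PBW, tensoring over $U(\fg_{\ge 0})$ gives an isomorphism of $\fh$-modules $M_\mu \cong \Lambda\fg_{-1} \otimes_{\mC} M^{\oa}_\mu$, whence
\[
\operatorname{ch} M_\mu \;=\; \operatorname{ch}(\Lambda\fg_{-1})\cdot \operatorname{ch} M^{\oa}_\mu \;=\; \prod_{1\le i<j\le n}(1+e^{-\vare_i-\vare_j})\,\cdot\,\frac{e^\mu}{\prod_{\alpha\in\Phi_{\oa}^+}(1-e^{-\alpha})},
\]
using that the weights of $\fg_{-1}$ are $\Phi_{\ob}^-=\{-\vare_i-\vare_j:i<j\}$. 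Next I would compute $\operatorname{ch} M^\vee_\mu$. Since $U(\fg)\cong \Lambda\fg_1\otimes U(\fg_{\le 0})$ as a right $U(\fg_{\le 0})$-module, Frobenius gives $M^\vee_\mu=\Hom_{U(\fg_{\le 0})}(U(\fg),(M^{\oa}_\mu)^\vee)\cong \Hom_{\mC}(\Lambda\fg_1,(M^{\oa}_\mu)^\vee)$ as $\fh$-modules, so
\[
\operatorname{ch} M^\vee_\mu \;=\; \operatorname{ch}((\Lambda\fg_1)^\ast)\cdot \operatorname{ch} (M^{\oa}_\mu)^\vee \;=\; \prod_{1\le i\le j\le n}(1+e^{-\vare_i-\vare_j})\,\cdot\,\frac{e^\mu}{\prod_{\alpha\in\Phi_{\oa}^+}(1-e^{-\alpha})},
\]
using that $\fg_1$ has weights $\Phi_{\ob}^+=\{\vare_i+\vare_j:i\le j\}$ and that the duality on $\cO^{\oa}$ preserves $\fh$-characters (see Example~\ref{ExInv}(b)).

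Taking the ratio, the only surviving factors are the diagonal ones:
\[
\operatorname{ch} M^\vee_\mu \;=\; \left(\prod_{i=1}^{n}(1+e^{-2\vare_i})\right)\operatorname{ch} M_\mu \;=\; \left(\sum_{\kappa\in S}e^{-\kappa}\right)\operatorname{ch} M_\mu \;=\; \sum_{\kappa\in S}\operatorname{ch} M_{\mu-\kappa},
\]
where the last step uses that multiplication of $\operatorname{ch} M_\mu$ by $e^{-\kappa}$ just shifts the highest weight from $\mu$ to $\mu-\kappa$. This gives the first equality. For the second, I would observe that the assignment $\kappa\mapsto 2\omega-\kappa$ (with $\omega=\vare_1+\cdots+\vare_n$) sends $\sum_i a_i\vare_i$ to $\sum_i(2-a_i)\vare_i$, which is a bijection of $S$ onto itself; reindexing the sum gives $\sum_{\kappa\in S}\operatorname{ch} M_{\mu-\kappa}=\sum_{\kappa\in S}\operatorname{ch} M_{\mu-2\omega+\kappa}$.

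There is really no hard step here: once the two character formulas are in place, the claim is a combinatorial identity between the products indexed by $\{i<j\}$ and $\{i\le j\}$. The only thing to be a little careful about is the Frobenius/PBW identification for the coinduction side (and the fact that $(M^{\oa}_\mu)^\vee$ has the same character as $M^{\oa}_\mu$), but both are standard.
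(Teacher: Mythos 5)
Your proposal is correct and follows essentially the same route as the paper: the paper's proof consists precisely of the two product formulas $\mathrm{ch}\,\Lambda\fg_1^\ast=\prod_{i\le j}(1+e^{-\vare_i-\vare_j})$ and $\mathrm{ch}\,\Lambda\fg_{-1}=\prod_{i<j}(1+e^{-\vare_i-\vare_j})$, from which the claim is left to the reader; you have simply written out the implicit steps (the PBW/Frobenius identifications, the ratio $\prod_i(1+e^{-2\vare_i})$, and the reindexing $\kappa\mapsto 2\omega-\kappa$), all of which are accurate.
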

\begin{proof}
We have
$$\text{ch}\Lambda \mf g_1^\ast = \prod_{1\leq i\le j\leq n}(1+ e^{-\vare_i-\vare_j})\quad \mbox{and}\quad \text{ch}\Lambda \mf g_{-1} = \prod_{1\leq i<j\leq n}(1+ e^{-\vare_i-\vare_j}),$$
from which the observation follows. 
\end{proof}

Recall that~$P_\la\in \mc O$ denote the projective cover of~$L_\la$ in~$\mc O$, for~$\la \in \h^*$.
  \begin{prop}[BGG reciprocity] \label{Thm::BGGrcp}
	For given $\la,\mu\in \mf h^*$, we have \begin{align}
	&(P_{\la}: M_\mu) = [M_{\mu}^{\vee}: L_\la].  \label{BGG1} 
	\end{align}
	Also, we have the following characters of projective covers
	\begin{align}
	&[P_\la: L_\mu] = \sum_{\zeta \in \h^*} [M^{\vee}_{\zeta}: L_\la][M_{\zeta}: L_\mu] = \sum_{\zeta \in \h^*} [ M_{\zeta}: L_\la][M^{\vee}_{\zeta+2\omega}: L_\mu]. \label{BGG2} \end{align}
\end{prop}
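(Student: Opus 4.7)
The plan is to prove \eqref{BGG1} first, then deduce the first equality of \eqref{BGG2} from it, and finally verify that the two sums in \eqref{BGG2} agree via Lemma~\ref{LemChVInDV}.

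First I would observe that, by the construction recalled immediately before Lemma~\ref{BGGlem}, $P_\la$ has a Verma flag, and by definition $P_\la$ is the projective cover of $L_\la$ in $\mc O_{red}$. Since $\End_{\mc O}(L_\la)=\mC$ in $\mc O_{red}$ (an easy consequence of the parity-refining decomposition $\mc O = \mc O_{red}\oplus\Pi\mc O_{red}$ from Section~\ref{SecDefO}), the exact functor $\Hom_{\mc O}(P_\la,-)$ satisfies $\dim\Hom_{\mc O}(P_\la,N)=[N:L_\la]$ for every $N\in\mc O$. Applying this with $N=M_\mu^{\vee}$ and invoking Lemma~\ref{BGGlem} gives
$$(P_\la:M_\mu)=\dim\Hom_{\mc O}(P_\la,M_\mu^{\vee})=[M_\mu^{\vee}:L_\la],$$
which is \eqref{BGG1}.

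For \eqref{BGG2}, the Verma flag of $P_\la$ yields $[P_\la:L_\mu]=\sum_\zeta (P_\la:M_\zeta)[M_\zeta:L_\mu]$, and substituting \eqref{BGG1} produces the first equality. To match the two expressions, I would apply Lemma~\ref{LemChVInDV} in two guises. The first expansion $\text{ch}M_\zeta^{\vee}=\sum_{\kappa\in S}\text{ch}M_{\zeta-\kappa}$ gives $[M_\zeta^{\vee}:L_\la]=\sum_{\kappa\in S}[M_{\zeta-\kappa}:L_\la]$, and reindexing $\zeta\mapsto\zeta+\kappa$ rewrites the first sum in \eqref{BGG2} as
$$\sum_{\zeta}[M_\zeta^{\vee}:L_\la][M_\zeta:L_\mu]=\sum_{\kappa\in S}\sum_{\zeta}[M_\zeta:L_\la][M_{\zeta+\kappa}:L_\mu].$$
The second expansion $\text{ch}M_\nu^{\vee}=\sum_{\kappa\in S}\text{ch}M_{\nu-2\omega+\kappa}$, applied with $\nu=\zeta+2\omega$, gives $[M_{\zeta+2\omega}^{\vee}:L_\mu]=\sum_{\kappa\in S}[M_{\zeta+\kappa}:L_\mu]$, and therefore $\sum_\zeta[M_\zeta:L_\la][M_{\zeta+2\omega}^{\vee}:L_\mu]$ equals precisely the same double sum. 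This establishes the second equality in \eqref{BGG2}.

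The argument is essentially formal once Lemma~\ref{BGGlem} and Lemma~\ref{LemChVInDV} are available; the only conceptual subtlety is that the usual projective-cover identity $\dim\Hom(P_\la,N)=[N:L_\la]$ requires working in $\mc O_{red}$ so that $\End_{\mc O}(L_\la)=\mC$. The symmetric role of $\omega$ in the two expressions for $[P_\la:L_\mu]$ is ultimately a reflection of the symmetry $\kappa\leftrightarrow 2\omega-\kappa$ of the index set $S$, which is the combinatorial shadow of $M_\lambda^\vee$ having both a natural $\Ind$-description from $\mf g_{\le 0}$ and a natural $\Coind$-description from $\mf g_{\ge 0}$.
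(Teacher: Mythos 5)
Your proposal is correct and follows essentially the same route as the paper: \eqref{BGG1} from Lemma~\ref{BGGlem} combined with the projective-cover identity, the first equality of \eqref{BGG2} from the Verma flag of $P_\la$, and the second equality by expanding dual Verma characters via both formulas in Lemma~\ref{LemChVInDV} and reindexing over $S$. The extra remarks (that $P_\la$ has a Verma flag and that $\End_{\mc O}(L_\la)=\mC$ in $\cO_{red}$) are correct details the paper leaves implicit.
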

\begin{proof} 
Equation \eqref{BGG1} follows from Lemma \ref{BGGlem} as
	$$[M^{\vee}_\mu: L_\la]=\text{dimHom}_{\mc O}(P_\la, M^{\vee}_\mu) = (P_\la: M_{\mu}).$$	
	This also implies the first equation in \eqref{BGG2}. The first equation in Lemma~\ref{LemChVInDV} then implies
\begin{equation}\label{BGG3}
[P_\la: L_\mu] \;=\; \sum_{\kappa\in S,\zeta\in\fh^\ast}[M_{\zeta-\kappa}:L_\lambda][M_\zeta:L_{\mu}]\;=\; \sum_{\kappa\in S,\zeta\in\fh^\ast}[M_{\zeta}:L_\lambda][M_{\zeta+\kappa}:L_{\mu}].
\end{equation}
	 The second equation in \eqref{BGG2} then follows from the second equation in Lemma~\ref{LemChVInDV}.
\end{proof}

\subsection{The block decomposition of~$\cO$}

We define the equivalence relation $\sim$ on $\h^*$ which is transitively generated by 
$$\begin{cases}\la\sim \la \pm2\vare_k, &\mbox{ for~$1\le k\le n$;}\\
\la\sim w\cdot \la,&\mbox{ for~$w\in W^\lambda$.}
\end{cases}$$
For any $\lambda\in\fh^\ast$, we denote by~$[\lambda]$ its corresponding equivalence class in~$\fh^\ast/\sim$. Clearly, we have $[\lambda]\subset\lambda+\Gamma$.

\begin{thm}\label{ThmBlock}
\begin{enumerate}[(i)]
\item The simple modules $L_\mu$ and $L_\lambda$ are in the same block if and only if $\lambda\sim\mu$. Consequently, we have a decomposition
$$\cO\;\cong\;\bigoplus_{\xi\in\fh^\ast/\sim}\cO_\xi,$$
where $\cO_\xi$ is the Serre subcategory generated by the simple modules $\{L_\lambda\,|\,\lambda\in\xi\}$. 
\item For $\mc O_{\Z}:=\mc O_{\oplus_{i=1}^n\Z \vare_i}$ the full subcategory of~$\mc O$ consisting of modules with integer weights, we have 
$$\cO_{\mZ}\;\cong\;\bigoplus_{i=0}^n \cO_{[\partial^i]},$$
with~$\partial^{i}:=i\vare_1+(i-1)\vare_2 + \cdots + \vare_{i}$ appearing in~\cite[Section 7.1.1]{Co16}. 
\end{enumerate}
\end{thm}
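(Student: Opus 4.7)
The plan is to use BGG reciprocity (Proposition \ref{Thm::BGGrcp}) together with the character identity of Lemma \ref{LemChVInDV} to reduce the block decomposition question to controlling composition factors of Verma modules. Two simples $L_\lambda, L_\mu$ lie in the same block precisely when they are linked by a chain of nonvanishing multiplicities $[P_\nu : L_{\nu'}]$, which via BGG reciprocity is governed by products $[M^\vee_\zeta : L_\nu][M_\zeta : L_{\nu'}]$.

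For the easy direction of (i) I treat the two families of generators of $\sim$ separately. For $\lambda \sim \lambda + 2\varepsilon_k$: setting $\nu = \lambda + 2\varepsilon_k$, Proposition \ref{Thm::BGGrcp} gives
\[ [P_\lambda : L_\nu] \;\geq\; [M^\vee_\nu : L_\lambda] \cdot [M_\nu : L_\nu] \;=\; [M^\vee_\nu : L_\lambda], \]
and Lemma \ref{LemChVInDV} implies $[M^\vee_\nu : L_\lambda] \geq [M_{\nu - 2\varepsilon_k} : L_\lambda] = [M_\lambda : L_\lambda] = 1$, so $L_\nu$ is a composition factor of $P_\lambda$ and thus lies in the block of $L_\lambda$. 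The case $\lambda \sim \lambda - 2\varepsilon_k$ follows by symmetry. For $\lambda \sim w \cdot \lambda$ with $w \in W^\lambda$: by induction on the length of $w$ in $W^\lambda$ it suffices to handle a simple reflection $s \in W^\lambda$ with $s \cdot \lambda < \lambda$, and then the classical Verma inclusion $M^{\oa}_{s \cdot \lambda} \hookrightarrow M^{\oa}_\lambda$ remains injective under the exact functor $\Ind^{\mf g}_{\mf g_{\geq 0}}$, producing $M_{s \cdot \lambda} \hookrightarrow M_\lambda$ and hence $L_{s \cdot \lambda}$ as a composition factor of $M_\lambda$.

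For the hard direction of (i) the crux is to prove that $[M_\zeta : L_\mu] \neq 0$ forces $\zeta \sim \mu$: granted this, if $[P_\lambda : L_\mu] \neq 0$ then Proposition \ref{Thm::BGGrcp} and Lemma \ref{LemChVInDV} yield $\zeta$ and $\kappa \in S$ with $[M_{\zeta - \kappa} : L_\lambda] \cdot [M_\zeta : L_\mu] \neq 0$, producing $\lambda \sim \zeta - \kappa \sim \zeta \sim \mu$ via the easy direction. The crux itself is the principal obstacle and the main new technical input: the centre of $U(\mf{pe}(n))$ is too small to separate blocks, so the usual Harish-Chandra central-character argument is unavailable and must be replaced by the fake Casimir operator of \cite{B+9}. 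Although not central in $U$, this operator acts on weight spaces of modules induced from $\mf g_{\geq 0}$ by a polynomial in the weight that is constant on $\sim$-classes, and decomposing the translation functors according to its generalised eigenspaces forces every composition factor of $M_\zeta$ to have highest weight $\sim$-equivalent to $\zeta$.

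For (ii) I exhibit the $\sim$-invariant multiset $p(\lambda) := \{(\lambda + \rho_{\oa})_i \bmod 2\}_{i=1}^n$, with residues taken in the appropriate coset of $\mZ$. Shifts $\lambda \mapsto \lambda \pm 2\varepsilon_k$ preserve each parity, and for $w \in W^\lambda$ one has $w \cdot \lambda + \rho_{\oa} = w(\lambda + \rho_{\oa})$, which merely permutes entries. For $\lambda \in \bigoplus_i \mZ \varepsilon_i$ the integral Weyl group satisfies $W^\lambda = W$, because the coordinate sum of $w \cdot \lambda - \lambda$ always vanishes and hence belongs to $\Gamma = \{\sum a_i \varepsilon_i : \sum a_i \in 2\mZ\}$. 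Thus $p(\lambda)$ depends only on the number of odd entries, giving at most $n+1$ equivalence classes, and a direct computation shows that $p(\partial^0), \ldots, p(\partial^n)$ realise the $n+1$ distinct possibilities. Conversely, given $\lambda \in \bigoplus_i \mZ \varepsilon_i$ with $p(\lambda) = p(\partial^i)$, one uses $W$ to match parities of $\lambda + \rho_{\oa}$ and $\partial^i + \rho_{\oa}$ entry-wise and then shifts by multiples of $2\varepsilon_k$ to reach $\partial^i$ exactly. Combined with part (i) this establishes (ii).
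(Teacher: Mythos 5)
The easy direction of (i) and your combinatorial argument for (ii) are sound and essentially match the paper (Proposition~\ref{Prop::Linkage}, plus the observation that on integral weights $\sim$ is detected by the multiset of parities of the entries of~$\check{\lambda}$; the paper leaves this last computation implicit). The problem is the hard direction of (i). Your whole argument rests on the claim that $[M_\zeta:L_\mu]\neq 0$ forces $\zeta\sim\mu$, and the mechanism you offer for it does not exist: the fake Casimir $\Omega$ lives in $\fg\otimes\mathfrak{gl}(n|n)$ and acts only on tensor products $M\otimes V$ with the natural module, commuting with the $\fg$-action; it is \emph{not} an operator on $M_\zeta$ itself, so it cannot ``act on weight spaces of modules induced from $\fg_{\geq 0}$ by a polynomial in the weight'' and it cannot separate composition factors of a single Verma module. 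Moreover its eigenvalues on the Verma factors of $M_\lambda\otimes V$ are the numbers $\check{\lambda}_j$ (Proposition~\ref{ppLemz}), which are not constant on $\sim$-classes; the point of the eigenspace decomposition is to keep $\lambda+\varepsilon_j$ and $\lambda-\varepsilon_j$ together while separating different $j$. Precisely because the centre of $U(\mathfrak{pe}(n))$ is too small, no Casimir-type argument applied directly to $M_\zeta$ is available, so the central step of your proof is an assertion rather than an argument.

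The way around this, which is what the paper does, is to trade composition multiplicities for Verma-flag multiplicities of projectives. Lemma~\ref{LemChVInDV} and equation~\eqref{BGG1} give $[M_\zeta:L_\mu]\le [M^\vee_\zeta:L_\mu]=(P_\mu:M_\zeta)$, so it suffices to show that $(P_\mu:M_\zeta)\neq 0$ implies $\mu\sim\zeta$ (in the paper this appears as the inequality $[P_\lambda:L_\mu]\le\sum_\zeta(P_\lambda:M_\zeta)(P_\mu:M_\zeta)$). Verma-flag multiplicities of projectives \emph{can} be controlled by $\Omega$: by Corollary~\ref{ReDomi}, $P_\mu$ is a direct summand of $M_{\lambda'}\otimes V^{\otimes k}$, hence, being indecomposable, a direct summand of an iterated generalised eigenspace $(\cdots(M_{\lambda'}\otimes V)_{z_1}\otimes\cdots\otimes V)_{z_k}$, and Proposition~\ref{ppLemz} combined with Lemma~\ref{Lemlmequiv} (Corollary~\ref{Corz}) shows that each functor $(-\otimes V)_z$ sends $\cO^\Delta([\lambda])$ into some $\cO^\Delta([\nu])$. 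Hence all Verma subquotients of $P_\mu$ have $\sim$-equivalent highest weights, and since $(P_\mu:M_\mu)=1$ they are all $\sim\mu$. Your ``crux'' about composition factors of Verma modules is true, but it is a consequence of this projective-module detour, not an input you can obtain directly from the fake Casimir; as written, your proof of the hard direction has a genuine gap at exactly this point.
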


Before proving the theorem, we mention the following lemma which shows that~$\sim$ is an analogue of the equivalence relation defined in~\cite[Definition~5.1]{Ch} and \cite[Section 8.3]{Co16}.
\begin{lem}
	Let $\la \in \h^*$ with~$\la_k = \la_{k+1}$ for some $1\leq k <n$. Then $\la \sim \la +(\vare_k +\vare_{k+1})$.
\end{lem}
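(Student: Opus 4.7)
The plan is to produce a short explicit chain of moves from $\lambda$ to $\lambda+\varepsilon_k+\varepsilon_{k+1}$ using only the two generators of $\sim$, the key observation being that the hypothesis $\lambda_k=\lambda_{k+1}$ makes the simple reflection $s_k$ swapping coordinates $k$ and $k+1$ integral for $\lambda$.

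First I would verify that $s_k\in W^\lambda$. Since $\rho_{\oa}$ for $\mathfrak{gl}(n)$ satisfies $(\rho_{\oa})_k-(\rho_{\oa})_{k+1}=1$, a direct computation of the $\rho_{\oa}$-shifted action on $\lambda$ gives
$$s_k\cdot\lambda \;=\; \lambda-(\lambda_k-\lambda_{k+1}+1)\,\alpha_k \;=\; \lambda-\alpha_k,$$
using the assumption $\lambda_k=\lambda_{k+1}$. In particular, $s_k\cdot\lambda-\lambda=-\alpha_k\in\mathbb{Z}\Phi=\Gamma$, so $s_k$ belongs to the integral Weyl group $W^\lambda$, and the second generator of $\sim$ yields
$$\lambda \;\sim\; s_k\cdot\lambda \;=\; \lambda-\varepsilon_k+\varepsilon_{k+1}.$$

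Next I would apply the first generator, namely $\mu\sim\mu+2\varepsilon_k$, with $\mu=\lambda-\varepsilon_k+\varepsilon_{k+1}$, which immediately gives
$$\lambda-\varepsilon_k+\varepsilon_{k+1} \;\sim\; \lambda+\varepsilon_k+\varepsilon_{k+1}.$$
Chaining the two equivalences yields $\lambda\sim\lambda+\varepsilon_k+\varepsilon_{k+1}$, as claimed.

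There is no real obstacle here; the proof is a two-line calculation. The only point worth double-checking is that the specific integrality condition $\lambda_k-\lambda_{k+1}\in\mathbb{Z}$ (which follows from $\lambda_k=\lambda_{k+1}$) is what forces the coefficient $\lambda_k-\lambda_{k+1}+1$ appearing in $s_k\cdot\lambda-\lambda$ to be an integer, so that $s_k\cdot\lambda-\lambda\in\Gamma$ and $s_k$ is admissible as a generator of $\sim$ for this particular weight.
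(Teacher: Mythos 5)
Your proof is correct and follows the same route as the paper: pass from $\lambda$ to $s_{\varepsilon_k-\varepsilon_{k+1}}\cdot\lambda=\lambda-\varepsilon_k+\varepsilon_{k+1}$ and then add $2\varepsilon_k$. The only difference is that you spell out the (easy) verification that $s_k\in W^\lambda$, which the paper leaves implicit.
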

\begin{proof}
   We may note that 
   $$\la \sim s_{\vare_k- \vare_{k+1}}\cdot \la \sim (s_{\vare_k- \vare_{k+1}}\cdot \la) + 2 \vare_k =  \la +(\vare_k +\vare_{k+1}),$$
   as desired.
\end{proof}

\begin{prop} \label{Prop::Linkage}
For $\la, \mu \in \h^*$ with~$\la \sim \mu$, we have that~$L_\la$ and $L_{\mu}$ lie in the same block.
\end{prop}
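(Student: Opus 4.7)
The plan is to verify the proposition separately for each of the two generators of $\sim$, in both cases by exhibiting a nonzero composition multiplicity $[P_\lambda : L_\mu]$ via the BGG reciprocity formula \eqref{BGG2} of Proposition \ref{Thm::BGGrcp}; since $P_\lambda$ is indecomposable, this forces $L_\lambda$ and $L_\mu$ to share a block. The strategy in each case is to pick out a single term $\zeta$ in the sum of \eqref{BGG2} for which both factors are visibly nonzero.

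For the generator $\lambda \sim \lambda + 2\varepsilon_k$, I will take $\mu = \lambda + 2\varepsilon_k$ and $\zeta = \mu$, so that $[M_\zeta : L_\mu] = 1$ is automatic. The key input is Lemma \ref{LemChVInDV}: since $2\varepsilon_k \in S$, the summand $\kappa = 2\varepsilon_k$ gives $[M^\vee_\zeta : L_\lambda] \ge [M_\lambda : L_\lambda] = 1$. The case $\lambda \sim \lambda - 2\varepsilon_k$ then follows by applying this reasoning with the roles of $\lambda$ and $\lambda - 2\varepsilon_k$ interchanged.

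For the generator $\lambda \sim w \cdot \lambda$ with $w \in W^\lambda$, I will first reduce to the case of a single simple reflection $s \in W^\lambda$. This reduction is legitimate because $W^\mu$ depends only on $\mu + \Upsilon$, so $W^\mu = W^\lambda$ for every $\mu \in W^\lambda \cdot \lambda$; consequently a reduced expression for $w$ produces a chain of successive weights along which each consecutive pair differs by a simple reflection of the common integral Weyl group. For such a simple reflection I may assume (by symmetry of $\sim$) that $s \cdot \lambda \le \lambda$. The classical BGG embedding gives $M^\oa_{s \cdot \lambda} \hookrightarrow M^\oa_\lambda$, and extending both modules trivially along $\mf g_1$ and then applying the exact functor $\Ind_{\mf g_{\ge 0}}^{\mf g}$ lifts this to an embedding $M_{s \cdot \lambda} \hookrightarrow M_\lambda$ in $\cO$, yielding $[M_\lambda : L_{s \cdot \lambda}] \ge 1$. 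Taking $\zeta = \lambda$ in \eqref{BGG2} and using $[M^\vee_\lambda : L_\lambda] \ge 1$, which follows from $L_\lambda = \Soc M^\vee_\lambda$ in \eqref{topsoc}, then produces $[P_\lambda : L_{s \cdot \lambda}] \ge 1$.

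I do not anticipate a serious obstacle: the hard conceptual input (BGG reciprocity) is already in hand, so the argument is essentially a direct application. The only care required is in choosing the correct $\zeta$ for each generator and in checking that $2\varepsilon_k$ belongs to the combinatorially defined set $S$ controlling the character of $M^\vee_\mu$ in Lemma \ref{LemChVInDV}.
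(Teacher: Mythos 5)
Your argument is correct and takes essentially the same route as the paper: both cases rest on Lemma~\ref{LemChVInDV} for the $\pm2\varepsilon_k$ generator and on inducing the classical Verma embedding $M^{\oa}_{s\cdot\lambda}\hookrightarrow M^{\oa}_\lambda$ along the exact functor $\Ind_{\fg_{\ge 0}}^{\fg}$ for the $W^\lambda$ generator. The only difference is cosmetic: where you package both cases through BGG reciprocity and the indecomposable $P_\lambda$, the paper reads the same multiplicities directly off the indecomposable modules $M^\vee_\lambda$ (via \eqref{topsoc}) and $M_\lambda$, so reciprocity is not needed at this point.
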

\begin{proof}
By definition of~$\sim$, it suffices to prove the following, for arbitrary~$\la \in \h^*$.
\begin{enumerate}[(a)]
\item The simple modules $L_\la$ and $L_{\la -2\vare_k}$ lie in the same block, for~$1\le k\le n$. 
\item For~$\alpha \in \Phi_{\oo}^+$ with $s_\alpha\cdot\lambda<\lambda$, we have ${\mathrm{Hom}}_{\mc O}(M_{s_{\alpha}\cdot \la}, M_{\la}) \neq 0$. In particular, $L_{\la}$ and $L_{s_{\alpha}\cdot \la}$ lie in the same block.
\end{enumerate}

For (a) we observe that Lemma~\ref{LemChVInDV} implies that
$$[M_\lambda^\vee:L_{\lambda-2\varepsilon_k}]\;\ge\; [M_{\lambda-2\varepsilon_k}:L_{\lambda-2\varepsilon_k}]\;=\;1.$$
Equation~\eqref{topsoc} implies that~$M_\lambda^\vee$ is indecomposable with simple socle $L_\lambda$, which proves (a).

	For (b) we observe that \cite[Theorem~5.1(a)]{Hu08} implies a monomorphism $M_{s_\alpha\cdot\lambda}^{\oa}\hookrightarrow M_\lambda^{\oa}$. Applying the exact induction functor $\Ind_{\fg_{\ge 0}}^{\fg}$ yields the desired morphism.	
	\end{proof}


Now let $V:= \C^{n|n}$ be the natural representation. We have the corresponding exact endofunctor $-\otimes V$ of~$\cO$, which restricts to an endofunctor of~$\cO^\Delta$. Following~\cite{B+9}, we will use the ``fake Casimir element'' $\Omega$ to decompose the functor $-\otimes V$. This operator $\Omega$ also appeared in~\cite[Section~8.4]{Copn} and \cite[Section~2]{CP}. 
For the explicit realisation of~$\Omega \in \mf g \otimes \mf{gl}(n|n)$, we refer to \cite[Section~4.1]{B+9}. It decomposes as
$$\Omega = \Omega_{-1} + \Omega_0 + \Omega_1\quad\mbox{ with }\quad \Omega_i \in \mf g_i \otimes \mf{gl}(n|n)_{-i},\;\mbox{ for~$i=-1, 0, 1$.}$$
 For every~$\mf g$-module $M$, the $\Omega$-action on $M\otimes V$ commutes with~$\mf g$-action.  
 Consequently, for any $M\in\cO$, we have a decomposition
\begin{align} \label{DecomO}
&M\otimes V\;\cong\;\bigoplus_{z\in\C}(M\otimes V)_z,
\end{align}
where $(M\otimes V)_z$ is the generalised eigenspace for~$\Omega$, with eigenvalue $z$.

Define the shifted Weyl vector $\hat{\rho} : = \sum_{i=1}^n (1-i)\vare_i$. We also set
$$\check{\lambda}\;=\;\lambda+\hat{\rho}\;=\; \lambda+\sum_{i=1}^n (1-i)\vare_i,\quad\mbox{for all $\lambda\in\fh^\ast$.}$$
Note that  $\lambda\sim\mu$ if and only if $\check{\mu}$ can be obtained from $\check{\lambda}$ by repeatedly adding $a\varepsilon_i$ with~$a\in 2\mZ$ and $1\le i\le n$ and exchanging coefficients which have integer difference.

\begin{prop} \label{ppLemz}
	For $\la\in\fh^\ast$, we have 
	$$(M_\la\otimes V:M_\nu)\;=\;\begin{cases}
	1&\mbox{ if $\nu=\la\pm\varepsilon_j$, for some $1\le j\le n$,}\\
	0&\mbox{ otherwise.}
	\end{cases}$$
	Furthermore, for $z\in\mC$,
	\begin{enumerate}[(i)]
\item $M_{\lambda+\vare_j}$ appears as a subquotient in~$(M_{\lambda}\otimes V)_z$ if and only if $\check{\lambda}_j =z$; 
\item $M_{\lambda-\vare_j}$ appears as a subquotient in~$(M_{\lambda}\otimes V)_z$ if and only if $\check{\lambda}_j =z$. 
\end{enumerate}

\end{prop}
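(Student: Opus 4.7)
My plan is to organise the proof into three stages: first establishing the Verma flag underlying the multiplicity formula, then reducing parts (i) and (ii) to computing a scalar $\Omega$-action on each Verma subquotient, and finally evaluating this scalar on a cyclic generator.

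For the Verma flag, as a $\mf g_\oa$-module one has $V=V_\oa\oplus V_\ob$ with weights $\{\vare_j\}$ and $\{-\vare_j\}$ respectively, and a direct check on the generators $e^{(1)}_{ij}\in\mf g_1$ from Section~\ref{subsect::pen} shows that $\mf g_1 V_\oa=0$ and $\mf g_1 V_\ob\subseteq V_\oa$, so $0\subset V_\oa\subset V$ is a short exact sequence of $\mf g_{\ge 0}$-modules. Refining both terms by $\mf b_\oa$-stable flags with one-dimensional quotients produces a $\mf b$-stable filtration $0=W_0\subset W_1\subset\cdots\subset W_{2n}=V$ whose quotients are the one-dimensional weight modules $\mC_{\pm\vare_j}$. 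The tensor identity $M_\la\otimes V\cong\Ind_{\mf b}^{\mf g}(V\otimes\mC_\la)$ together with the exactness of induction lifts this to a Verma flag $\{F_k=\Ind_{\mf b}^{\mf g}(W_k\otimes\mC_\la)\}_{k=0}^{2n}$ of $M_\la\otimes V$ in which each $M_{\la+\vare_j}$ and $M_{\la-\vare_j}$ appears exactly once; this establishes the multiplicity formula.

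For parts (i) and (ii), since $\Omega$ commutes with the $\mf g$-action it preserves the filtration $\{F_k\}$ and induces a $\mf g$-endomorphism on each subquotient $F_k/F_{k-1}\cong M_{\la\pm\vare_j}$. As $\End_{\mf g}(M_\nu)=\mC$ for any Verma module, this endomorphism is a scalar $c_k$, and the decomposition~\eqref{DecomO} splits according to the values of $c_k$, with $F_k/F_{k-1}$ contributing precisely to the summand indexed by $z=c_k$. It thus suffices to prove $c_k=\check\la_j$ whenever $F_k/F_{k-1}\cong M_{\la\pm\vare_j}$. By choosing the filtration of $V_\oa$, respectively $V_\ob$, so that $\mC_{\vare_j}$ is its bottom quotient, respectively $\mC_{-\vare_j}$ is its top quotient, $M_{\la+\vare_j}$ is realised as a submodule with cyclic generator $v_\la\otimes v_j$ (for $v_j\in V_\oa$ of weight $\vare_j$), while $M_{\la-\vare_j}$ is realised as the top quotient with cyclic generator the image of $v_\la\otimes v_{n+j}$.

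To compute $c_k$, write $\Omega=\Omega_{-1}+\Omega_0+\Omega_1$ as in~\cite[Section~4.1]{B+9}. In both cases $\Omega_1$ acts by zero since $\mf g_1 v_\la=0$. In the first case $\Omega_{-1}$ also vanishes because the basis elements $E_{i,n+k}$ of $\mf{gl}(n|n)_1$ annihilate $V_\oa$, so the eigenvalue is determined by $\Omega_0$; expanding this in a dual basis of $\mf g_0=\mf{gl}(n)$ yields the scalar $\la_j+1-j=\check\la_j$ on $v_\la\otimes v_j$, with off-diagonal pieces vanishing because $M_{\la+\vare_j}$ has one-dimensional weight space at its highest weight $\la+\vare_j$. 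In the second case, $\Omega_{-1}(v_\la\otimes v_{n+j})$ lies in $M_\la\otimes V_\oa$ and must be shown to lie in $F_{k-1}$, while an analogous $\Omega_0$-calculation again produces $\check\la_j$ as the diagonal contribution. The main obstacle is precisely this last descent: identifying which components of $\Omega(v_\la\otimes v_{n+j})$ lie in $F_{k-1}$ requires a careful PBW analysis of the weight space $(M_\la\otimes V)^{\la-\vare_j}$, which receives nonzero contributions from many of the other Vermas in the flag.
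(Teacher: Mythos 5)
Your opening step (the Verma flag obtained from the tensor identity, hence the multiplicity formula) is correct and coincides with the paper's argument, but the device you then use to avoid computing modulo lower terms of the flag is not available. For $j\neq 1$ there is no $\fb$-stable filtration of $\mC_\la\otimes V$ having $\mC_{\la+\vare_j}$ at the bottom or $\mC_{\la-\vare_j}$ at the top: the only weight vector of $V$ killed by both $\fn^+_{\oa}$ and $\fg_1$ is the vector of weight $\vare_1$ (the root vectors $e_{ij}$, $i<j$, send the weight-$\vare_j$ vector of $V_\oa$ to the weight-$\vare_i$ vector, and no vector of $V_\ob$ is killed by $\fg_1$), while dually, since $\fg_1V=V_\oa$ and $\fn^+_{\oa}V_\ob$ spans the weights $-\vare_2,\dots,-\vare_n$, the unique one-dimensional $\fb$-quotient of $V$ has weight $-\vare_1$. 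So for $j\ge 2$ the module $M_{\la+\vare_j}$ is \emph{not} a submodule of $M_\la\otimes V$ generated by $v\otimes e_j$, and $M_{\la-\vare_j}$ is \emph{not} its top quotient; the scalar must be computed on $v\otimes e_j$, resp.\ $v\otimes f_j$, \emph{modulo the preceding terms of the canonical flag}, which is exactly what the paper's proof does by quoting \cite[Lemma~4.2.1]{B+9}. Relatedly, ``$\Omega$ preserves the filtration because it commutes with the $\fg$-action'' is not a valid inference on its own (a $\fg$-endomorphism need not preserve a given filtration by submodules); it only follows once one knows that $\Omega$ maps each generator into its own filtration step, i.e.\ after the computation you are trying to set up.

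More seriously, the computation you yourself identify as ``the main obstacle'' is left undone, and your anticipated resolution is incorrect. You expect $\Omega_{-1}(v\otimes f_j)$, which lies in $M_\la\otimes V_\oa$, to lie in the lower filtration term; it does not. Writing $\Omega_{-1}=\sum_a x_a\otimes y_a$ with $x_a\in\fg_{-1}$ and $y_a\in\mathfrak{gl}(n|n)_1$, each nonvanishing term satisfies $x_av\otimes y_af_j\equiv \pm\, v\otimes x_a(y_af_j)$ modulo the submodule $\Ind^{\fg}_{\fb}(\mC_\la\otimes V_\oa)$, and $x_a(y_af_j)$ is a multiple of $f_j$; summing over the relevant pairs gives, as in the paper's proof, $\Omega_{-1}(v\otimes f_j)\equiv-(n-1)\,v\otimes f_j$ modulo that submodule, a nonzero multiple of the generator of the subquotient $M_{\la-\vare_j}$ whenever $n\ge 2$. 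The eigenvalue $\check{\la}_j$ appears only after adding this to the $\Omega_0$-contribution, which modulo the lower terms is $\la_j+n-j$ rather than $\check{\la}_j$. Hence your two expectations --- that $\Omega_{-1}$ can be discarded and that $\Omega_0$ alone yields $\check{\la}_j$ on the $-\vare_j$ generator --- cannot both hold for $n\ge 2$, and since the analysis of this term is precisely the content of part (ii), that part of the proposition is not established by your argument.
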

\begin{proof}
Denote the highest weight vector of~$M_\lambda$ by~$v$ and choose a basis $\{e_j, f_j\,|\, 1\le j\le n\}$ of~$V$ where $e_j$ has weight $\varepsilon_j$ and $f_j$ has weight $-\varepsilon_j.$  Since 
$$M_\lambda\otimes V\;\cong\; U\otimes_{U(\fb)}(\mC_\lambda\otimes V),$$
we have a filtration
$$0=N_{0}\subset N_{1}\subset\cdots\subset N_{2n-1}\subset N_{2n}=M_\lambda\otimes V,$$
where
$$N_i/N_{i-1}\;\cong\; M_{\lambda+\varepsilon_{i}}\quad\mbox{and}\quad N_{n+i}/N_{n+i-1}\;\cong\; M_{\lambda-\varepsilon_{1+n-i}},\quad\mbox{for~$1\le i\le n$.}$$
Furthermore, $v\otimes e_i+N_{i-1}$ generates $N_i/N_{i-1}$ and $v\otimes f_i+N_{n+i-1}$ generates $N_{n+i}/N_{n+i-1}$.

By \cite[Lemma~4.2.1(1)]{B+9}, we have $\Omega(v\otimes e_i)= \Omega_0(v\otimes e_i).$
By \cite[Lemma~4.2.1(2)]{B+9}, we thus find
$$\Omega(v\otimes e_i)\;\in\; (\lambda_i+1-i)(v\otimes e_i)+ N_{i-1}.$$
Similarly, with some additional straightforward computations, we have
$$\Omega(v\otimes f_i)\;\in \; \Omega_0(v\otimes f_i)-(n-1)(v\otimes f_i)+N_{n}\;\subset \; (\lambda_i+1-i)(v\otimes f_i)+ N_{n+i-1}.$$
Since $\Omega$ commutes with the action of~$\fg$, the claim about the generalised eigenvalues now follows.
\end{proof}

\begin{lem}\label{Lemlmequiv}
	Fix $\lambda,\mu\in\fh^\ast$ and $1\le i,j\le n$. If 
	$\lambda\sim\mu$ and $\check{\lambda}_i=\check{\mu}_j$,
	then we have $\lambda+\varepsilon_i\sim\mu+\varepsilon_j$.
\end{lem}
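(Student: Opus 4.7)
The plan is to give a combinatorial description of the equivalence classes of $\sim$, from which the lemma becomes an immediate multiset computation. The key translation is that $\rho_{\oa}-\hat\rho$ is a scalar multiple of $\sum_k\varepsilon_k$, so the $W$-dot-action on $\lambda$ corresponds under $\lambda\mapsto\check\lambda$ to the ordinary permutation action on the tuple $\check\lambda$; in particular $s_{r,s}\in W^\lambda$ iff $\check\lambda_r-\check\lambda_s\in\mZ$, and the generators of $\sim$ act on $\check\lambda$ either by shifting one coordinate by $\pm 2$ or by swapping two coordinates whose difference is an integer.

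Using this picture, I would associate to each weight $\lambda$ the following invariant: the map $k\mapsto \check\lambda_k+\mZ\in\mC/\mZ$ (the coset assignment on positions), together with, for each coset $O\in\mC/\mZ$, the multiset of $\check\lambda_k+2\mZ$ indexed by those $k$ with $\check\lambda_k\in O$. A shift by $\pm 2\varepsilon_k$ leaves all of this data unchanged, and a permitted swap keeps the $\mZ$-coset at each position fixed and merely rearranges within one multiset, so the invariant is preserved by $\sim$. Conversely two weights with the same invariant are $\sim$-equivalent: one uses a permutation built from transpositions of positions whose $\check\lambda$-values share a $\mZ$-orbit (all of which lie in the integral Weyl group) to align coordinates modulo $2\mZ$ position by position, after which the two weights differ only by a sum of terms $\pm 2\varepsilon_k$. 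The main (mild) obstacle here is verifying that the alignment permutation genuinely lies in $W^\lambda$, which reduces to the fact that $W^\lambda$ contains the full symmetric group on every position set assigned to a single $\mZ$-orbit.

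With this characterization the lemma is a quick bookkeeping check. Set $c:=\check\lambda_i=\check\mu_j$. The weight $\lambda+\varepsilon_i$ has shifted coordinates obtained from those of $\lambda$ by replacing the entry $c$ at position $i$ with $c+1$, and similarly $\mu+\varepsilon_j$ at position $j$. Because $c$ and $c+1$ lie in the same $\mZ$-coset, the coset assignment at every position is preserved, so the $\mZ$-orbit partition of positions for $\lambda+\varepsilon_i$ and $\mu+\varepsilon_j$ agrees with that for $\lambda$ and $\mu$. Within the orbit containing $c$, both new multisets arise from a common starting multiset by removing one copy of $c+2\mZ$ and adding $c+1+2\mZ$, hence coincide; multisets at all other cosets are untouched. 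Therefore $\lambda+\varepsilon_i$ and $\mu+\varepsilon_j$ share the same invariant and are $\sim$-equivalent.
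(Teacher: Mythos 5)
Your argument is correct, but it follows a genuinely different route from the paper's. You prove the lemma by classifying the $\sim$-classes outright: you attach to $\lambda$ the invariant consisting of the map $k\mapsto\check{\lambda}_k+\mZ$ together with, for each $\mZ$-coset, the multiset of values $\check{\lambda}_k+2\mZ$ over the positions in that coset; you check it is preserved by the generators of $\sim$, show conversely that equal invariants force $\sim$-equivalence (the key point being that $W^\lambda$ contains the full symmetric group on each block of positions whose $\check{\lambda}$-entries differ by integers, which holds since $s_{\varepsilon_r-\varepsilon_s}\cdot\lambda-\lambda\in\mZ(\varepsilon_r-\varepsilon_s)\subset\Gamma$ precisely when $\check{\lambda}_r-\check{\lambda}_s\in\mZ$), and then the lemma is a two-line multiset comparison. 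The paper instead argues locally: the case $i=j$ is taken as immediate, and for $i\neq j$ the hypotheses give $\check{\lambda}_i-\check{\lambda}_j\in\mZ$, so $s:=s_{\varepsilon_i-\varepsilon_j}\in W^\lambda$; setting $\nu=s\cdot\lambda$ one has $\nu\sim\mu$ and $\check{\nu}_j=\check{\mu}_j$, the base case yields $\nu+\varepsilon_j\sim\mu+\varepsilon_j$, and $s\cdot(\nu+\varepsilon_j)=\lambda+\varepsilon_i$ finishes. Your approach costs more bookkeeping but is more self-contained: it simultaneously substantiates the combinatorial description of $\sim$ asserted just before the lemma (and, in effect, the paper's `obvious' base case $i=j$), and the classification of classes is reusable; the paper's reduction is shorter but leans on that description. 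The only step in your sketch that should be written out in full is the converse (alignment) direction, and your reduction of it to the transposition criterion for membership in $W^\lambda$ is exactly the right point to verify.
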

\begin{proof}
	The special case for~$i=j$ is obvious.
	
	Assume now that~$i\not=j$. The assumptions imply that $\lambda_i-\mu_j\in\mZ$ and $\mu_j-\lambda_j\in\mZ$. Consequently $s=s_{\varepsilon_i-\varepsilon_j}\in W^\lambda$. For $\nu:=s\cdot\lambda$, we thus find
	$$\nu\sim\lambda\sim\mu\quad\mbox{and}\quad \check{\nu}_j=\check{\lambda}_i=\check{\mu}_j.$$ 
	By the above special case we thus find $\nu+\varepsilon_j\sim \mu+\varepsilon_j$. Since $s\cdot(\nu+\varepsilon_j)=\lambda+\varepsilon_i$ the conclusion follows.
\end{proof}

\begin{cor}\label{Corz}
For each $\lambda\in \fh^\ast$ and $z\in\mC$, there exists $\nu\in\fh^\ast$, such that~$\theta:=(-\otimes V)_z$ restricts to a functor $\cO^\Delta([\lambda])\to\cO^\Delta([\nu])$.
\end{cor}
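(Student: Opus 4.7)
The plan is to reduce to the Verma case and then track equivalence classes via Lemma~\ref{Lemlmequiv}. The key point is that $\Omega$ commutes with the $\fg$-action, so the eigenspace decomposition~\eqref{DecomO} is functorial and respects Verma filtrations, while Proposition~\ref{ppLemz} pins down exactly which Verma modules appear in $(M_\mu\otimes V)_z$ in terms of the shifted coordinates $\check{\mu}_j$.

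First, I would fix $\lambda\in\fh^\ast$ and $z\in\mC$, and treat separately the two cases. If there is no $\mu\in[\lambda]$ and no $j$ with $\check{\mu}_j=z$, then Proposition~\ref{ppLemz} shows $(M_\mu\otimes V)_z$ has no Verma subquotients for any $\mu\in[\lambda]$, hence vanishes (a module with Verma flag and no subquotients is zero); in that case $\theta|_{\cO^\Delta([\lambda])}=0$, and any $\nu$ works. Otherwise, pick any such $\mu_0\in[\lambda]$ and $i_0$ with $\check{\mu}_{0,i_0}=z$, and set
$$\nu\;:=\;\mu_0+\varepsilon_{i_0}.$$
Next I would show that every potential Verma subquotient weight of $(M_\mu\otimes V)_z$, as $\mu$ ranges over $[\lambda]$, lies in $[\nu]$. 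For any $\mu\in[\lambda]$ and $j$ with $\check{\mu}_j=z$, Lemma~\ref{Lemlmequiv} applied to $\mu_0\sim\mu$ and $\check{\mu}_{0,i_0}=\check{\mu}_j$ yields $\mu+\varepsilon_j\sim\mu_0+\varepsilon_{i_0}=\nu$. For the other subquotient, one uses the defining relation $\mu+\varepsilon_j\sim(\mu+\varepsilon_j)-2\varepsilon_j=\mu-\varepsilon_j$, so $\mu-\varepsilon_j\sim\nu$ as well. Thus by Proposition~\ref{ppLemz}, all Verma subquotients of each $(M_\mu\otimes V)_z$ with $\mu\in[\lambda]$ have weights in $[\nu]$.

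Finally I would extend from Verma modules to all of $\cO^\Delta([\lambda])$. Given $N\in\cO^\Delta([\lambda])$ with Verma flag $0=N_0\subset N_1\subset\cdots\subset N_k=N$ and $N_i/N_{i-1}\cong M_{\mu_i}$ for $\mu_i\in[\lambda]$, tensoring with $V$ yields a filtration of $N\otimes V$ whose subquotients are the $M_{\mu_i}\otimes V$. Applying the $\Omega$-eigenspace projector $(-)_z$ (which, since $\Omega$ commutes with $\fg$, is a $\fg$-module direct summand functor) produces a filtration of $\theta(N)=(N\otimes V)_z$ with successive quotients $(M_{\mu_i}\otimes V)_z$. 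Each of these is a direct summand of a module in $\cO^\Delta$, hence itself lies in $\cO^\Delta$, and by the previous paragraph lies in $\cO^\Delta([\nu])$. Concatenating the Verma flags of the $(M_{\mu_i}\otimes V)_z$ exhibits $\theta(N)\in\cO^\Delta([\nu])$, as required.

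There is no real obstacle here beyond the combinatorial bookkeeping: the essential content is Proposition~\ref{ppLemz} (identification of Verma subquotients of $(M_\mu\otimes V)_z$) together with Lemma~\ref{Lemlmequiv} (stability of the equivalence relation under the shift $\mu\mapsto\mu+\varepsilon_j$ when coordinates agree). The only subtle point to record is that the eigenspace decomposition is compatible with Verma flags because $\Omega$ is $\fg$-central, and direct summands of modules in $\cO^\Delta$ remain in $\cO^\Delta$.
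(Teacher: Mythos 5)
Your proposal is correct and follows essentially the same route as the paper: handle the vanishing case via Proposition~\ref{ppLemz}, choose a representative $\mu_0\in[\lambda]$ with $\check{\mu}_{0,i_0}=z$ and set $\nu=\mu_0+\varepsilon_{i_0}$, then combine Proposition~\ref{ppLemz}, Lemma~\ref{Lemlmequiv} and $\mu+\varepsilon_j\sim\mu-\varepsilon_j$, and finally reduce from arbitrary modules with Verma flag to Verma modules using exactness of $\theta$. The only difference is that you spell out the compatibility of the $\Omega$-eigenspace projection with Verma flags, which the paper subsumes under ``exactness of $\theta$''.
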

\begin{proof}
Assume first that~$[\lambda]$ contains no element $\kappa$ for which there is $1\le i\le n$ such that~$\check{\kappa}_i=z$. By Proposition~\ref{ppLemz}, $\theta$ is zero on $\cO^\Delta([\lambda])$, so there is nothing to prove.

Since we can replace $\lambda$ by any element in~$[\lambda]$, by the above we can assume that~$\check{\lambda}_i=z$, for some fixed  $1\le i\le n$.
By exactness of~$\theta$ it now suffices to prove that for each $\mu\in[\lambda]$, we have 
$\theta(M_\mu)\in \cO^\Delta([\lambda+\varepsilon_i]).$
This property follows from Proposition~\ref{ppLemz}, Lemma~\ref{Lemlmequiv} and the fact $\mu+\varepsilon_j\sim \mu-\varepsilon_j$.
\end{proof}

\begin{proof}[Proof of Theorem~\ref{ThmBlock}]
By Proposition~\ref{Prop::Linkage}, it suffices to prove that~$[P_\lambda:L_\mu]\not=0$ implies $\lambda\sim\mu$. We have
$$[P_\lambda:L_\mu]\;=\; \sum_{\zeta\in\fh^\ast}(P_\lambda:M_\zeta)[M_\zeta:L_\mu]\;\le\; \sum_{\zeta\in\fh^\ast}(P_\lambda:M_\zeta)(P_\mu :M_\zeta),$$
where the inequality follows from the combination of Lemma~\ref{LemChVInDV} and equation \eqref{BGG1}. Consequently, it actually suffices to prove that~$(P_\lambda:M_\mu)\not=0$ implies $\lambda\sim\mu$. By Corollary~\ref{ReDomi}, $P_\lambda$ is a direct summand of~$M_{\lambda'}\otimes V^{\otimes k}$, for some $k\in\N$ and $\lambda'\in\fh^\ast$. Since $P_\lambda$ is indecomposable, there must exist $\{z_l\in\C\}$ such that~$P_\lambda$ is a direct summand of
$$(\cdots ((M_{\lambda'}\otimes V)_{z_1}\otimes V)_{z_2}\otimes \cdots \otimes V)_{z_k}.$$
By Corollary~\ref{Corz}, we thus have $P_\lambda\in \cO^\Delta([\nu])$, for some $\nu\in\fh^\ast$. Since $(P_\lambda:M_\lambda)=1$, we have $\lambda\in[\nu]$ and the conclusion follows.
\end{proof}

The following lemma justifies our restriction to~$\cO_{\mZ}$ rather than $\cO_{\Upsilon}$ in Theorem~\ref{ThmBlock}(ii).
\begin{lem} \label{WtShf} For any $\la \in \h^*$ and $c\in \C$, we have an equivalence
	$$\mc O_{[\lambda]}\cong \mc O_{[\la +c\omega]}$$
	\end{lem}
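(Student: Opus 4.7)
The plan is to construct the equivalence by tensoring with a $1$-dimensional $\fg$-module of weight $c\omega$, exploiting that $\mathfrak{pe}(n)$ admits a non-trivial abelianisation (unlike the basic simple classical Lie superalgebras).

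First I would establish that for every $c\in\mC$ there is a well-defined character $\chi_c:\fg\to\mC$ of weight $c\omega$. The quotient $\fg/[\fg,\fg]=\fg/\mathfrak{spe}(n)$ is one-dimensional, spanned by the image of the grading element $H_0$, and a direct computation gives $\omega(H_0)=n/2$. Hence setting $\chi_c(\mathfrak{spe}(n))=0$ and $\chi_c(H_0)=cn/2$ defines the required character: indeed $\chi_c$ vanishes on $[\fg_{\oa},\fg_{\oa}]=\mathfrak{sl}(n)$ and on $\fg_{\ob}$ for parity reasons, while $[\fg_{\ob},\fg_{\ob}]=[\fg_{-1},\fg_{1}]\subset\mathfrak{sl}(n)$ is automatically in the kernel. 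Let $\mC_{c\omega}$ denote the resulting $1$-dimensional $\fg$-module, with parity chosen so that the functor $F_c:=-\otimes\mC_{c\omega}$ (possibly post-composed with $\Pi$) preserves $\cO_{red}$.

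The functor $F_c$ is then an exact auto-equivalence of $\cO$ with quasi-inverse $F_{-c}$, and since tensoring a simple with a $1$-dimensional module is again simple with weights shifted by $c\omega$, we have $F_c(L_\mu)\cong L_{\mu+c\omega}$ for every $\mu\in\fh^*$.

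To finish, I would verify that $F_c$ restricts to an equivalence $\cO_{[\lambda]}\to\cO_{[\lambda+c\omega]}$; by Theorem~\ref{ThmBlock}(i) this reduces to the assertion $\mu\sim\nu\Longleftrightarrow\mu+c\omega\sim\nu+c\omega$, which I would check on the two generators of $\sim$. The generator $\mu\sim\mu\pm 2\vare_k$ is obviously preserved under translation by $c\omega$. For the dot-action generator, the key observation is that $W=S_n$ acts on $\fh^*$ by permuting the $\vare_i$ and therefore fixes $\omega$, so
\begin{equation*}
w\cdot(\mu+c\omega)-(\mu+c\omega)=w\cdot\mu-\mu,
\end{equation*}
which gives simultaneously $W^{\mu+c\omega}=W^\mu$ and $w\cdot(\mu+c\omega)=w\cdot\mu+c\omega$. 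Hence translation by $c\omega$ is a bijection of $\sim$-classes and the claim follows. The only non-routine input is the construction of the character $\chi_c$; the rest is a direct verification.
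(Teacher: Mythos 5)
Your proof is correct and follows essentially the same route as the paper: the paper uses the trace character $\delta:\fg\to\mC$ with kernel $\mathfrak{spe}(n)$ to build the one-dimensional module $\mC_c$ of $\fh$-weight $c\omega$ and tensors with it, which is exactly your $\chi_c$ constructed via the abelianisation $\fg/[\fg,\fg]=\fg/\mathfrak{spe}(n)$. Your extra check that translation by $c\omega$ is compatible with the relation $\sim$ (using that $W=S_n$ fixes $\omega$) is a harmless explicit verification of what the paper leaves implicit.
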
 
\begin{proof}
We have the Lie superalgebra morphism $\delta:\fg\to \mC$, with kernel $\mathfrak{spe}(n)$, defined by mapping each element to the trace of the matrix $a\in \mC^{n\times n}$, using the realisation in Subsection \ref{subsect::pen}. The morphism $c\delta$, for arbitrary~$c\in\mC$ thus yields a one-dimensional representation $\mC_c$ of~$\fg$ on which $\fh$ acts through $c\omega$.
 This yields an auto-equivalence $ -\otimes \C_{c} : \mc O\rightarrow \mc O$ 
	 with inverse $-\otimes \C_{-c}$, which restricts to the desired equivalence. 
\end{proof}

\subsection{Example: Generic blocks} Contrary to other types of Lie superalgebras, generic blocks in category $\cO$ for $\mathfrak{pe}(n)$ are not semisimple. In this section we pose some natural questions concerning their structure.

Let $t \in \C[\h^*]$ be the polynomial defined by  
$$ t(\mu): =\prod_{i<j}(\mu_i-\mu_j+j-i-1).$$  The following lemma generalises \cite[Lemma 3.2]{Se02} (also, see \cite[Corollary 5.8]{Se02}).
\begin{lem}  \label{IrrM}
	Let $\mu \in \h^*$. If $t(\mu) \neq 0$ we have $K_\mu=L_\mu$. If furthermore we have $M_\mu^\oa = L_\mu^\oa$ (i.e., $\mu$ is $\mf g_\oa$-antidominant), then 
	$$M_\mu=K_\mu=L_\mu\qquad\mbox{and}\qquad P_\mu=K^\vee_{\mu+2\omega}=M^\vee_{\mu+2\omega}.$$
\end{lem}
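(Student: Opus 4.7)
The plan is to establish the claims sequentially, each building on the previous.

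For the first assertion, $K_\mu = L_\mu$ when $t(\mu) \neq 0$, I would invoke Serganova's typicality theorem \cite[Corollary~5.8]{Se02}. Alternatively, one can use~\eqref{topsoc} and~\eqref{topsoc2} together with the presentation $K_\lambda \cong \Coind^{\mf g}_{\mf g_{\geq 0}} L^{\oa}_{\lambda + 2\rho_{-1}}$ from~\cite[Theorem~2.2]{BF} to see that $K_\mu$ has simple top and simple socle, after which the typicality condition forces these to agree and $K_\mu$ to be simple.

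Under the additional hypothesis $M^{\oa}_\mu = L^{\oa}_\mu$, the equality $M_\mu = K_\mu$ is immediate from applying $\Ind^{\mf g}_{\mf g_{\geq 0}}$ to $M^{\oa}_\mu = L^{\oa}_\mu$. Since $2\omega$ lies in the $W$-invariant part of $\mf h^{*}$, the weight $\mu + 2\omega$ is also $\mf g_{\oa}$-antidominant, hence $(M^{\oa}_{\mu + 2\omega})^{\vee} = L^{\oa}_{\mu + 2\omega}$ by simple-preserving duality on $\mc O^{\oa}$; applying $\Coind^{\mf g}_{\mf g_{\leq 0}}$ then yields $M^{\vee}_{\mu + 2\omega} = K^{\vee}_{\mu + 2\omega}$.

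The main task is $P_\mu = K^{\vee}_{\mu + 2\omega}$. Using~\eqref{topsoc2} together with $K_\mu = L_\mu$ gives $\Top(K^{\vee}_{\mu + 2\omega}) = \Soc(K_\mu) = L_\mu$, so projectivity of $P_\mu$ produces a surjection $\phi: P_\mu \twoheadrightarrow K^{\vee}_{\mu + 2\omega}$. I would then compare characters: by Lemma~\ref{LemChVInDV} one has $\text{ch}(K^{\vee}_{\mu + 2\omega}) = \sum_{\kappa \in S}\text{ch}(M_{\mu + \kappa})$, while by BGG reciprocity (Proposition~\ref{Thm::BGGrcp}) the character of $P_\mu$ is $\sum_\nu [M^{\vee}_\nu : L_\mu]\,\text{ch}(M_\nu)$, with each multiplicity $[M^{\vee}_\nu : L_\mu]$ further expanding via Lemma~\ref{LemChVInDV} as $\sum_\kappa [M_{\nu - \kappa} : L_\mu]$.

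The main obstacle will be the explicit determination of the composition multiplicities $[M_\lambda : L_\mu]$. The key leverage is that $L_\mu = M_\mu$ is itself a simple Verma, which couples the $\mf g$-composition of any $M_\lambda$ to its $\mf g_{\oa}$-restriction $M_\lambda|_{\mf g_{\oa}} \cong \Lambda\mf g_{-1} \otimes M^{\oa}_\lambda$; identifying exactly which weights $\lambda$ carry $L_\mu$ in their composition (and with what multiplicity) should reduce the BGG-reciprocity character of $P_\mu$ to $\sum_{\kappa \in S}\text{ch}(M_{\mu + \kappa})$, whereupon the resulting character equality promotes the surjection $\phi$ to an isomorphism, completing the proof.
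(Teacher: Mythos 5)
Your plan follows the same route as the paper's proof: Serganova's argument (or \cite[Lemma 3.2]{Se02}) for $K_\mu=L_\mu$; the identifications $M_\mu=K_\mu$ and $M^\vee_{\mu+2\omega}=K^\vee_{\mu+2\omega}$ from antidominance; equation \eqref{topsoc2} together with $K_\mu=L_\mu$ to get $\Top(K^\vee_{\mu+2\omega})=L_\mu$ and hence a surjection $P_\mu\twoheadrightarrow K^\vee_{\mu+2\omega}$; and finally a character comparison via BGG reciprocity. The difficulty is that you stop exactly where the work lies. You never establish $\mathrm{ch}\,P_\mu=\mathrm{ch}\,M^\vee_{\mu+2\omega}$; you only say that determining the multiplicities $[M_\zeta:L_\mu]$ ``should'' collapse the BGG expression to $\sum_{\kappa\in S}\mathrm{ch}\,M_{\mu+\kappa}$. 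What you actually need is that $[M_\zeta:L_\mu]\neq 0$ forces $\zeta=\mu$; without this, \eqref{BGG2} (equivalently \eqref{BGG1} combined with Lemma \ref{LemChVInDV}) only yields $\mathrm{ch}\,P_\mu=\sum_{\zeta}[M_\zeta:L_\mu]\,\mathrm{ch}\,M^\vee_{\zeta+2\omega}$, which can be strictly larger than $\mathrm{ch}\,M^\vee_{\mu+2\omega}$.

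Moreover, this missing step is not a routine verification that can be deferred: under the hypotheses ``$t(\mu)\neq 0$ and $M^\oa_\mu=L^\oa_\mu$'' alone it fails. Take $\mf g=\mf{pe}(2)$ and $\mu$ with $\mu_1-\mu_2\in\Z_{\le -3}$, so $t(\mu)\neq 0$ and $\mu$ is antidominant; then $s\cdot\mu$ satisfies $(s\cdot\mu)_1-(s\cdot\mu)_2\in\Z_{>0}$ and Lemma \ref{Lem::ComVer}(ii) gives $[M_{s\cdot\mu}:L_\mu]=1$, whence $(P_\mu:M_{s\cdot\mu+2\omega})=[M^\vee_{s\cdot\mu+2\omega}:L_\mu]\neq 0$ by \eqref{BGG1} and Lemma \ref{LemChVInDV}; consequently $\mathrm{ch}\,P_\mu=\mathrm{ch}\,M^\vee_{\mu+2\omega}+\mathrm{ch}\,M^\vee_{s\cdot\mu+2\omega}$ (compare the case $\la_1+3=\la_2$ of Corollary \ref{ChProj}, where $\ell(P_\la)=11$ while $\ell(M^\vee_{\la+2\omega})=4$). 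So the collapse you are counting on holds only when $L_\mu$ occurs in no Verma module other than $M_\mu$, e.g.\ when $\mu_i-\mu_j\notin\Z$ for all $i\neq j$, which is the generic regime in which the lemma is subsequently applied; your argument would need this extra input made explicit (the paper's own proof is equally terse here, simply citing \eqref{BGG2}, and the same caveat applies to the integral antidominant case). As written, the decisive character identity is a genuine gap rather than a deferred computation, and the surjection $P_\mu\twoheadrightarrow K^\vee_{\mu+2\omega}$ by itself proves nothing about $P_\mu$.
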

\begin{proof}
That $K_\mu=L_\mu$ when $t(\mu)\not=0$ follows from the exact same arguments as the proof of~\cite[Lemma~3.2]{Se02}. If $\mu$ is antidominant, we clearly have $M_\mu=K_\mu$ and $M^\vee_\mu=K^\vee_\mu$. Furthermore,
by equation~\eqref{topsoc2} it then follows that the top of~$K_{\mu+2\omega}^\vee=M^\vee_{\mu+2\omega}$ is $L_\mu=K_\mu$. Finally, ${\rm ch} P_{\mu}={\rm ch} M_{\mu+2\omega}^\vee$ by equation~\eqref{BGG2}. This implies that~$K_{\mu+2\omega}^\vee= P_{\mu}$.
\end{proof}

\begin{rem}
	If $\mu\in\Upsilon$ is antidominant, then $(-1)^{\frac{1}{2}n(n-1)}t(\mu)>0$, so the condition on $t(\mu)$ in Lemma~\ref{IrrM} becomes redundant.  
\end{rem}

\begin{Question} 
Consider a generic $\mu\in\fh^\ast$  in the sense that~$\mu_i-\mu_j\not\in\mZ$, for all $i\not=j$. Recall $S\subset\fh^\ast$ from Lemma~\ref{LemChVInDV}. Then Lemma \ref{IrrM} and equation~\eqref{BGG3} imply
	 $$[P_\mu:L_\nu]=\begin{cases} 1 &\mbox{ if $\nu-\mu\in S$,}\\
	 0&\mbox{ otherwise.}
	 \end{cases}$$
\begin{enumerate}[(i)]
\item  Is the radical filtration of~$P_\mu$ given by~$$\text{rad}^{\ell}P_\mu/ \text{rad}^{\ell+1}P_\mu \cong \bigoplus_{1\leq i_1<i_2<\cdots <i_{\ell}\leq n} L_{\mu+2\sum_{k=1}^{\ell}\vare_{i_k}}, \text{ for } \ell \in \N\cup \{0\}?$$ \\   
\item We have the set $I:=\{1,2,\cdots,n\}$.
Let $\Z^{\oplus I}\cong \mZ^{\oplus n}$ be the free abelian group with basis $\{e_i\,|\, e_i\in I\}$. We define the quiver $Q$ with vertices $Q_0=\mZ^{\oplus I}$ and edges given by 
$$Q_1=\{x^{(i)}_v:v\to v+e_i\,|\, v\in \mZ^{\oplus I}\mbox{ and }i\in I\}.$$
	Let $A^n$ be the path algebra of~$Q$ with relations 	
$$x^{(i)}_{v+e_i} x^{(i)}_{v}=0\quad\mbox{and}\quad x^{(i)}_{v+e_j}x^{(j)}_v=x^{(j)}_{v+e_i}x^{(i)}_v,\quad\mbox{for all $v\in \mZ^{\oplus I}$ and $i,j\in I$.}$$
Do we have an algebra isomorphism
$$A^n\;\stackrel{\sim}{\to}\; \text{End}^{\text{fin}}_{\mc O}(\bigoplus_{\lambda \in [\mu]} P_\lambda)\qquad\mbox{with }\quad 1_v\mapsto \id_{P_{\mu+2v}},$$
with $v=\sum_iv_ie_i\in \mZ^{\oplus I}$ interpreted as $\sum_iv_i\varepsilon_i\in\fh^\ast$?
	
		Note that we have $A^n\cong A^{\otimes n}$, with $A=A^1$ the path algebra of the quiver with edges labelled by $\mZ$
	$$\xymatrix{
	\cdots\ar[r]^{x_{-2}}&\bullet_{-1}\ar[r]^{x_{-1}}&\bullet_0\ar[r]^{x_0}&\bullet_1\ar[r]^{x_1}&\cdots
	}$$
	and relations $x_ix_{i-1}=0$ for $i\in\mZ$. In particular, $A^n$ is Koszul, with grading given by putting the arrows of the quiver in degree $1$.
	\item Let $\nu \in \mf h^*$ such that  $\nu_i -\nu_j \notin \Z$ for all $1\leq i\neq j\leq n$. Is it true that~$\mc O_{[\mu]} \cong \mc O_{[\nu]}$?
	\end{enumerate}
\end{Question}

We will answer these questions in the affirmative for $\mathfrak{pe}(2)$ in the following section.

\section{Category~$\cO$ and primitive ideals for~$\mathfrak{pe}(2)$}\label{Sec2}

\subsection{Characters of simple modules}
 
\begin{lem} \label{Lem::ComVer} 
	Let $s = s_{\vare_1-\vare_2}$ be the simple reflection associated to the simple root $\vare_1 -\vare_2$. We have the following composition factors of Verma modules.
	\begin{enumerate}[(i)]
	\item If $\mu_1 - \mu_2\in\mC\backslash \Z_{\ge 0}$ then $M_\mu =L_\mu$.
	\item If $\mu_1-\mu_2 \in \Z_{> 0}$ then $\emph{ch}M_\mu =\emph{ch}L_\mu +\emph{ch}L_{s\cdot\mu}.$
	\item If $\mu_1 = \mu_2$ then $\emph{ch}M_\mu 
		=\emph{ch}L_\mu + \emph{ch}L_{s\cdot \mu} + \emph{ch}L_{\mu-\omega}$. 
		\end{enumerate}
		\end{lem}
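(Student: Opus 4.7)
The plan is to deduce all three statements from the short exact sequence of $\fg_\oa$-modules
$$0\to M_{s\cdot\mu}^{\oa}\to M_\mu^{\oa}\to L_\mu^{\oa}\to 0$$
(valid precisely when $\mu_1-\mu_2\in\Z_{\geq 0}$) combined with Lemma~\ref{IrrM}. The starting observation is that for $\fg=\mf{pe}(2)$ the space $\fg_{-1}$ is one-dimensional of weight $-\omega$, so that, as $\fg_\oa$-characters,
$$\text{ch}\,M_\mu=(1+e^{-\omega})\,\text{ch}\,M_\mu^{\oa}\quad\text{and}\quad\text{ch}\,K_\mu=(1+e^{-\omega})\,\text{ch}\,L_\mu^{\oa}.$$

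For (i), the hypothesis $\mu_1-\mu_2\notin\Z_{\geq 0}$ simultaneously ensures that $\mu$ is $\fg_\oa$-antidominant, so $M_\mu^{\oa}=L_\mu^{\oa}$, and that $t(\mu)=\mu_1-\mu_2\neq 0$. Lemma~\ref{IrrM} then gives $M_\mu=K_\mu=L_\mu$ directly.

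For (ii) and (iii) the plan is to extend the short exact sequence above trivially on $\fg_1$ and apply the exact functor $\Ind_{\fg_{\geq 0}}^{\fg}$, producing
$$0\to M_{s\cdot\mu}\to M_\mu\to K_\mu\to 0.$$
Since $(s\cdot\mu)_1-(s\cdot\mu)_2=\mu_2-\mu_1-2\leq-2$, part (i) applied to $s\cdot\mu$ yields $M_{s\cdot\mu}=L_{s\cdot\mu}$. In case (ii) one has $t(\mu)=\mu_1-\mu_2>0$, so the first assertion of Lemma~\ref{IrrM} yields $K_\mu=L_\mu$ and the formula for $\text{ch}\,M_\mu$ follows at once.

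Case (iii) is the main obstacle, since $t(c\omega)=0$ and $K_{c\omega}$ is no longer simple. The plan there is to show $K_{c\omega}$ has length two with composition factors $L_{c\omega}$ and $L_{(c-1)\omega}$. As $L_\mu^{\oa}$ is one-dimensional for $\mu=c\omega$, the module $K_{c\omega}$ has dimension two, with weights $c\omega$ and $(c-1)\omega$. The character $\delta$ of $\mf{pe}(n)$ used in Lemma~\ref{WtShf} produces, for every $c\in\C$, a one-dimensional $\fg$-module on which $\fh$ acts by $c\omega$; hence each $L_{c\omega}$ is one-dimensional. The surjection $K_{c\omega}\tto L_{c\omega}$ therefore has a one-dimensional kernel, necessarily a simple $\fg$-submodule of weight $(c-1)\omega$ and thus isomorphic to $L_{(c-1)\omega}=L_{\mu-\omega}$. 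Combining this with the short exact sequence above yields the required character identity.
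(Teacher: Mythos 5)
Your argument is correct, and it reaches the statement by a somewhat different route than the paper. The paper never forms the exact sequence $0\to M_{s\cdot\mu}\to M_\mu\to K_\mu\to 0$; instead it argues purely with $\fg_{\oa}$-characters, splitting $\mathrm{ch}\,M_\mu=(1+e^{-\omega})\,\mathrm{ch}\,M^{\oa}_\mu$ into four $\mathfrak{gl}(2)$-simple characters and regrouping them: for (ii) it uses Lemma~\ref{IrrM} in the form $K_\mu=L_\mu$ and $K_{\mu-\omega}=L_{\mu-\omega}$ to identify $\mathrm{ch}\,L_\mu=\mathrm{ch}\,L^{\oa}_\mu+\mathrm{ch}\,L^{\oa}_{\mu-\omega}$, and for (iii) it uses the one-dimensionality of $L_{a\omega}$ together with part (i) applied to $s\cdot\mu$. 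Your device of inducing the $\mathfrak{sl}(2)$-sequence $0\to M^{\oa}_{s\cdot\mu}\to M^{\oa}_\mu\to L^{\oa}_\mu\to 0$ up to $\fg$ is sound: the trivial $\fg_1$-extension is forced for weight reasons (the weights of $M^{\oa}_\mu$ lie in $\mu+\Z(\vare_1-\vare_2)$, which $\fg_1$ moves out of), the functor $\Ind_{\fg_{\geq 0}}^{\fg}$ is exact by PBW, and the paper's identification $M_\nu\cong\Ind_{\fg_{\geq 0}}^{\fg}M^{\oa}_\nu$ uses exactly this convention. What your version buys is a module-level statement: in case (iii) you actually exhibit the composition series of the two-dimensional module $K_{c\omega}$ (factors $L_{c\omega}$ and $L_{(c-1)\omega}$) rather than only a character identity, and in case (ii) you avoid the auxiliary fact $K_{\mu-\omega}=L_{\mu-\omega}$. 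The essential inputs coincide in both proofs: Lemma~\ref{IrrM}, the $\mathfrak{gl}(2)$ Verma structure, part (i) applied to $s\cdot\mu$ (where $(s\cdot\mu)_1-(s\cdot\mu)_2=\mu_2-\mu_1-2$ is a negative integer), and the one-dimensional modules arising from the character $\delta$ as in Lemma~\ref{WtShf}.
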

\begin{proof} We first note that part (i) follows from Lemma \ref{IrrM}. 
	
	
	Now we suppose that~$\mu_1-\mu_2 \in \Z_{> 0}$. Then $K_\mu = L_\mu$ and $K_{\mu -\omega}  =L_{\mu -\omega}$ by  Lemma \ref{IrrM}, which means that 
	$$\text{ch}L_\mu = \text{ch}L_\mu^\oo +\text{ch}L_{\mu-\omega}^\oo,$$ since $e^{-\omega}\text{ch}L_\zeta^\oo =\text{ch}L_{\zeta-\omega}^\oo $ for all $\zeta \in \h^*$. Also, we note that \begin{align*}
	&\text{ch}M_\mu = \text{ch}M^\oa_\mu+e^{-\omega}\text{ch}M^\oa_\mu = \text{ch}L^\oo_\mu+ \text{ch}L^\oo_{s\cdot \mu} + \text{ch}L^\oo_{s\cdot (\mu-\omega)} + \text{ch}L^\oo_{\mu-\omega}.
	\end{align*}
	Part (ii) thus follows.
	
	Finally, we assume that~$\mu_1 =\mu_2$, which means that~$\mu=a\omega$, for some $a\in\mC$. We may observe that~$L_{a\omega}$ is one-dimensional for each $a\in \C$. We may conclude that have 
	\begin{align*}
	&\text{ch}M_\mu = \text{ch}L^\oo_\mu+ \text{ch}L^\oo_{s\cdot \mu} + \text{ch}L^\oo_{s\cdot (\mu-\omega)} + \text{ch}L^\oo_{\mu-\omega}
	=\text{ch}L_\mu + \text{ch}L_{s\cdot \mu} + \text{ch}L_{\mu-\omega},
	\end{align*} where we used part(i) to calculate $\text{ch}L_{s\cdot \mu}$. 
\end{proof}

We give the irreducible characters for~$\mf{pe}(2)$ as follows:
\begin{cor} \label{pe2char} Let $\mu \in \h^*$.  
\begin{enumerate}[(i)]
	\item If $\mu_1 -\mu_2 \notin \Z_{\geq 0}$ then $L_\mu =M_\mu$.
	\item If $\mu_1 -\mu_2 \in \Z_{> 0}$ then $\emph{ch}L_\mu =\emph{ch}M_\mu - \emph{ch}M_{s\cdot \mu}$. 
    \item If $\mu_1 =\mu_2 $ then $\emph{ch}L_\mu = e^{\mu}$. 
\end{enumerate}\end{cor}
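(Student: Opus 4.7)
The plan is to derive each case of the corollary directly from the corresponding item of Lemma \ref{Lem::ComVer}, which already carries the main computational burden.

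For part (i), the statement is identical to Lemma \ref{Lem::ComVer}(i), so nothing new needs to be done.

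For part (ii), I would observe that it suffices to rewrite $\text{ch}L_{s\cdot\mu}$ as $\text{ch}M_{s\cdot\mu}$ in the identity $\text{ch}M_\mu = \text{ch}L_\mu + \text{ch}L_{s\cdot\mu}$ supplied by Lemma \ref{Lem::ComVer}(ii). This reduces to an explicit calculation of $s\cdot\mu$. With $\rho_{\oa} = \tfrac{1}{2}(\varepsilon_1 - \varepsilon_2)$, a direct computation gives $s\cdot\mu = (\mu_2-1)\varepsilon_1 + (\mu_1+1)\varepsilon_2$, so $(s\cdot\mu)_1 - (s\cdot\mu)_2 = -(\mu_1-\mu_2+2) \in \Z_{<0}$. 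Hence case (i) applied to $s\cdot\mu$ gives $L_{s\cdot\mu} = M_{s\cdot\mu}$, and substitution into the character formula yields the desired identity.

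For part (iii), where $\mu = a\omega$ with $a = \mu_1 = \mu_2$, the assertion $\text{ch}L_\mu = e^\mu$ amounts to showing that $L_{a\omega}$ is one-dimensional. I would establish this by exhibiting such a module directly: the Lie superalgebra morphism $\delta:\fg \to \mC$ with kernel $\mf{spe}(2)$ used in the proof of Lemma \ref{WtShf} produces, via $a\delta$, a one-dimensional $\fg$-module $\mC_a$ on which $\fh$ acts through $a\omega$. As a one-dimensional highest weight module it coincides with the simple quotient $L_{a\omega}$.

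I do not foresee any substantive obstacle: the genuine representation-theoretic content has been packaged into Lemma \ref{Lem::ComVer} and into the existence of the trace-type characters $\mC_a$ of $\fg$. The only piece of actual calculation is the identification of $s\cdot\mu$ in part (ii), which is routine.
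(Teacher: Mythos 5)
Your proposal is correct and follows the same route the paper intends: the corollary is read off from Lemma~\ref{Lem::ComVer}, with (ii) using that $s\cdot\mu=(\mu_2-1)\varepsilon_1+(\mu_1+1)\varepsilon_2$ is antidominant so $L_{s\cdot\mu}=M_{s\cdot\mu}$ by case (i), and (iii) being the one-dimensionality of $L_{a\omega}$ already observed in the proof of that lemma. Your explicit justification of the latter via the trace character $a\delta$ (as in Lemma~\ref{WtShf}) is a harmless, correct elaboration of what the paper leaves as an observation.
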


\subsection{Characters of projective modules}
 \label{SetchP}

 Let $\ell(M)$ denote the length of a composition series of a module $M$.
 \begin{cor} \label{ChProj}
 \begin{enumerate}[(i)]
 \item If $\la_1 -\la_2 \notin \Z$ then we have $\ell(P_\la) = 4$.
 
\item	If $\la_1-\la_2 \in \Z$, we have
	$$\ell(P_\la)= \left\{ \begin{array}{llll} 12, \text{ if $\la_1 +4 < \la_2$.} \\
	13, \text{ if $\la_1 +4=\la_2$}.\\
	11,\text{ if $\la_1+3= \la_2$}. \\
	15,\text{ if $\la_1+2= \la_2.$}\\
	5,\text{ if $\la_1+1= \la_2.$}\\
	18,\text{ if $\la_1= \la_2.$}\\
	7,\text{ if $\la_1-1= \la_2.$}\\
	9,\text{ if $\la_1-2= \la_2.$}\\
	8,\text{ if $\la_1-2> \la_2.$}\\
	\end{array} \right.
	$$

	\end{enumerate}
\end{cor}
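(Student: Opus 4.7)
The plan is to combine BGG reciprocity with the explicit Verma composition series for $\mathfrak{pe}(2)$. By Proposition~\ref{Thm::BGGrcp} and Lemma~\ref{LemChVInDV} (where, for $n=2$, $S=\{0,\,2\varepsilon_1,\,2\varepsilon_2,\,2\omega\}$), summing $[P_\lambda:L_\mu]$ over $\mu$ and rewriting $[M_\zeta^\vee:L_\lambda]=\sum_{\kappa\in S}[M_{\zeta-\kappa}:L_\lambda]$ gives the master formula
\begin{equation*}
\ell(P_\lambda)\;=\;\sum_{\eta\in\mathfrak{h}^*}[M_\eta:L_\lambda]\,f(\eta),\qquad f(\eta)\;:=\;\sum_{\kappa\in S}\ell(M_{\eta+\kappa}).
\end{equation*}
The proof then reduces to (a) tabulating $f$ and (b) identifying the $\eta$ contributing to the sum.

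For (a), Lemma~\ref{Lem::ComVer} shows that $\ell(M_\mu)$ depends only on $d:=\mu_1-\mu_2$, taking the values $3,2,1$ when $d=0$, $d\in\mathbb{Z}_{>0}$, or otherwise. Since the four elements of $S$ shift $d$ by $0,\,+2,\,-2,\,0$, a direct computation gives $f(\eta)=4$ when $d\notin\mathbb{Z}$ or $d<-2$; $f=6,\,5,\,9,\,7,\,9$ for $d=-2,\,-1,\,0,\,1,\,2$ respectively; and $f=8$ when $d>2$. For (b), reading Lemma~\ref{Lem::ComVer} backwards, $[M_\eta:L_\lambda]\neq 0$ exactly when $\eta$ is one of: $\lambda$ itself (always, with multiplicity $1$); the weight $s\cdot\lambda=(\lambda_2-1,\lambda_1+1)$ when $\lambda_1<\lambda_2-2$, obtained from case (ii); the diagonal weight $(\lambda_1+1,\lambda_2-1)$ when $\lambda_1=\lambda_2-2$ (this is $s\cdot\lambda$ forced onto the diagonal, coming from case (iii)); or $\lambda+\omega$ when $\lambda_1=\lambda_2$, from the $L_{\mu-\omega}$ summand in case (iii). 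Each contributes with multiplicity one, and these cases are mutually exclusive among the ``extra'' contributions.

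The nine cases now follow by substitution. For instance, $\lambda_1+4<\lambda_2$ gives $4+8=12$; $\lambda_1+4=\lambda_2$ gives $4+9=13$, since $s\cdot\lambda$ then has $d=2$; $\lambda_1+3=\lambda_2$ gives $4+7=11$; $\lambda_1+2=\lambda_2$ gives $6+9=15$; $\lambda_1=\lambda_2$ gives $9+9=18$; the three cases $\lambda_2\in\{\lambda_1-1,\lambda_1-2,\lambda_1-2^-\}$ each have only $\eta=\lambda$ contributing and give $7,9,8$; and part (i), where $\lambda_1-\lambda_2\notin\mathbb{Z}$, gives $f(\lambda)=4$. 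The main obstacle is purely combinatorial bookkeeping: the irregular jumps of $f$ at $d\in\{-2,-1,0,1,2\}$ interact with the extra contributing $\eta$ near the diagonal in a way that forces each of the nine cases to be treated separately, explaining the asymmetric list of lengths in the statement.
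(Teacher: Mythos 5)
Your proposal is correct and follows essentially the same route as the paper: BGG reciprocity \eqref{BGG2} together with Lemma~\ref{LemChVInDV} and the Verma composition series of Lemma~\ref{Lem::ComVer}, followed by case-by-case bookkeeping. The only (cosmetic) difference is that you track lengths via the function $f(\eta)=\sum_{\kappa\in S}\ell(M_{\eta+\kappa})$ instead of writing out the full lists of composition factors of the dual Vermas $M^\vee_{\zeta+2\omega}$ as the paper does; your tabulated values of $f$ and the identification of the weights $\eta$ with $[M_\eta:L_\lambda]\neq 0$ are all correct and reproduce the stated lengths.
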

\begin{proof} We let $[M]$ denote the image of a module $M$ in the Grothendieck group. First assume that~$\la_1-\la_2 \notin \Z$. Equation
\begin{equation}\label{P2gen}[P_\la]  = [M^\vee_{\la+2\omega} ]=  [L_{\la}] + [L_{\la + 2 \vare_1}] +   [L_{\la + 2 \vare_2}]+  [L_{\la + 2\omega}]\end{equation} follows from \eqref{BGG2}, \eqref{BGG3} and Lemma~\ref{Lem::ComVer}(i) and implies part (i). 
	
	Now we assume that~$\la_1-\la_2 \in \Z$. 	The lengths of projective covers in~$\mc O$ over $\mf{pe}(2)$ follow from direct computations by Proposition \ref{Thm::BGGrcp} and Lemma \ref{Lem::ComVer}.

 If $\la_1+4 <\la_2$ then 
\begin{align*}
&[P_\la] = [M_{\la+2\omega}^{\vee}] + [M_{s\cdot \la+2\omega}^{\vee}] \\ &\hskip 0.73cm=2 [L_{\la+ 2\omega}] + 2[L_{\la + 2 \vare_2}] + 2  [L_{\la + 2 \vare_1}]+ 2 [L_{\la }] +[L_{s\cdot \la+ 2\omega}] + [L_{s\cdot \la + 2 \vare_2}]+ [L_{s\cdot \la +2\vare_1}]+[L_{s\cdot \la }].
\end{align*} 


 If $\la_1+4 = \la_2$ then 
 \begin{align*}
 &[P_\la] =[M_{\la+2\omega}^{\vee}] + [M_{s\cdot \la+2\omega}^{\vee}] \\ &\hskip 0.73cm=2 [L_{\la+ 2\omega}] + 2[L_{\la + 2 \vare_2}] +  2 [L_{\la + 2 \vare_1}]+ 2 [L_{\la }]  \\ &\hskip 0.73cm +[L_{s\cdot \la+ 2\omega}] + [L_{s\cdot \la + 2 \vare_2}] +  [L_{s\cdot \la + 2 \vare_1}]+  [L_{s\cdot \la }]+[L_{s\cdot \la -\vare_1+\vare_2}].
 \end{align*}
 
 If $\la_1+3 =\la_2$ then 
 \begin{align*}
 &[P_\la] =[M_{\la+2\omega}^{\vee}] + [M_{s\cdot \la+2\omega}^{\vee}] \\ &\hskip 0.73cm=2 [L_{\la+ 2\omega}] +2 [L_{\la + 2 \vare_2}] +  [L_{\la + 2 \vare_1}]+ 2 [L_{\la }] + [L_{s\cdot \la+ 2\omega}] + [L_{s\cdot \la + 2 \vare_2}] + [L_{s\cdot \la + 2 \vare_1}] +   [L_{s\cdot \la }].
 \end{align*}
 
 If $\la_1+2 =\la_2$ then 
 \begin{align*}
 &[P_\la] = [M_{\la+2\omega}^{\vee}] + [M_{s\cdot \la+2\omega}^{\vee}] \\ &\hskip 0.73cm= 2[L_{\la+ 2\omega}] + 2[L_{\la + 2 \vare_2}] +   [L_{\la + 2 \vare_1}] + 2[L_{s\cdot \la + 2 \vare_2}] \\ &\hskip 0.73cm+ [L_{\la + \vare_1 -\vare_2}] + 2 [L_{\la }] +[L_{s\cdot \la+ 2\omega}] +[L_{s\cdot \la+\omega}] + [L_{s\cdot \la + 2 \vare_1}] + [L_{s\cdot \la }] + [L_{s\cdot \la -\omega }].
 \end{align*}
 
 If $\la_1+1 = \la_2$ then 
 \begin{align*}
 &[P_\la] = [M_{\la+2\omega}^{\vee}]  = [L_{\la+2\omega}] + [L_{\la+2\vare_2}]+ [L_{\la+2\vare_1}] + [L_{s\cdot \la+2\vare_2}]+[L_{\la}].
 \end{align*}
 
 If $\la_1 = \la_2$ then \begin{align*}
 &[P_\la] = [M_{\la+2\omega}^{\vee}] + [M_{\la+3\omega}^{\vee}] \\
 &\hskip 0.73cm=[L_{\la+2\omega}]+[L_{s\cdot \la +2\omega}]+[L_{\la+\omega}] + [L_{\la+2\vare_1}]+[L_{s\cdot \la +2\vare_2}]    \\
 &\hskip 0.73cm+ [L_{\la+2\vare_2}] +[L_{\la}]+ [L_{s\cdot \la}] +[L_{\la-\omega}]+[L_{\la+3\omega}]+[L_{s\cdot \la +3\omega}]+[L_{\la+2\omega}]  \\ & \hskip 0.73cm+[L_{\la+3\vare_1+\vare_2}]+[L_{s\cdot \la +\vare_1+3\vare_2}] + [L_{\la+\vare_1+3\vare_2}] +[L_{\la+\omega}] + [L_{s\cdot \la+\omega}] +[L_{\la}].
 \end{align*}

 If $\la_1 -1 = \la_2 $ then 
 \begin{align*}
 &[P_\la] = [M_{\la+2\omega}^{\vee}] = [L_{\la+2\omega}] + [L_{s\cdot \la+2\omega}] + [L_{\la +2\vare_2}]+[L_{\la+2\vare_1}] + [L_{s\cdot  \la +2\vare_2}] + [L_{\la}] + [L_{s\cdot \la}].
 \end{align*}

 If $\la_1 -2 = \la_2 $ then
 \begin{align*}
 &[P_\la] = [M_{\la+2\omega}^{\vee}] \\ &\hskip 0.73cm= [L_{\la+2\omega}] + [L_{s\cdot \la+2\omega}] + [L_{\la +2\vare_2}] +[L_{s\cdot \la +2\vare_1}]+ [L_{\la-\alpha}]+[L_{s\cdot \la}] +[L_{\la+2\vare_1}] + [L_{s\cdot  \la +2\vare_2}] + [L_{\la}].
 \end{align*}
 
 If $\la_1 -2 > \la_2 $ then
 \begin{align*}
 &[P_\la] = [M_{\la+2\omega}^{\vee}] \\ &\hskip 0.73cm= [L_{\la+2\omega}] + [L_{s\cdot \la+2\omega}] + [L_{\la +2\vare_2}] +[L_{s\cdot \la +2\vare_1}]+[L_{\la+2\vare_1}] + [L_{s\cdot  \la +2\vare_2}] + [L_{\la}] + [L_{s\cdot \la}]. 
 \end{align*}
 
 This concludes the proof.
\end{proof}

\begin{rem}
It is proved in~\cite[Theorem 7.1.1]{B+9} that projective covers in~$\mc F$ are sent to projective covers or zero by the translation functor defined in Corollary \ref{Corz}. However, in $\cO$, already for~$\mf{pe}(2)$ there are translated projective covers which are decomposable. For example, Lemma \ref{LemChVInDV} and Proposition~\ref{ppLemz} allow to show that
$$(P_0\otimes V)_{z=2}\;\cong\; P_{\vare_1}\oplus P_{\vare_1+2\vare_2}.$$
Another observation is that in $\cO$ we no longer have $[P_\lambda:L_\mu]\le 1$, contrary to \cite[Theorem~8.1.2]{B+9}.
\end{rem}

\subsection{Equivalence of blocks}
\begin{thm} The BGG category~$\mc O$ for $\mf{pe}(2)$ has exactly $3$ blocks up to equivalence. Concretely:
\begin{enumerate}[(i)]
\item $\mc O_{[\partial^0]} \not \cong \mc O_{[\partial^1]} \cong \mc O_{[\partial^2]}$.
\item Let $\la, \mu \in \h^*$ with~$\la_1 -\la_2, ~\mu_1-\mu_2 \notin \Z$, then 
	$$\mc O_{[\la]}\cong \mc O_{[\mu]}\quad\mbox{and}\quad \mc O_{[\la]}\not \cong \mc O_{[\partial^i]}, \text{ for }i=0,1, 2.$$
	\end{enumerate}
	Furthermore, $\cO_{[\lambda]}$ is Koszul, whenever $\la_1 -\la_2\notin \Z$.
\end{thm}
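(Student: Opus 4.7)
The proof plan divides into three pieces: (a) the single equivalence in~(i), handled by a direct application of Lemma~\ref{WtShf}; (b) all three non-equivalences, handled uniformly by a numerical invariant; and (c) the equivalence among generic blocks together with Koszulity, handled by identifying the basic algebra of a generic block with the explicit model $A^2$ described above.

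For (a), since $\partial^2 - \partial^1 = \vare_1+\vare_2 = \omega$, Lemma~\ref{WtShf} with $c=1$ immediately gives $\mc O_{[\partial^1]} \cong \mc O_{[\partial^1+\omega]} = \mc O_{[\partial^2]}$. For (b), I use that the multiset $\{\ell(P_\la)\}$ of composition lengths of indecomposable projectives is preserved by any equivalence. By Corollary~\ref{ChProj}, $\mc O_{[\partial^0]}$ contains $P_0$ of length $18$, while every weight $(a,b)$ in $[\partial^1]$ has $a$ odd and $b$ even, so $\la_1-\la_2$ is odd and Corollary~\ref{ChProj} forces $\ell(P_\la)\in\{5,7,8,11,12\}$; this settles $\mc O_{[\partial^0]}\not\cong\mc O_{[\partial^1]}$. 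For the non-equivalence of a generic block with any $\mc O_{[\partial^i]}$, equation~\eqref{P2gen} gives $\ell(P_\la)=4$ throughout a generic block, strictly smaller than the minimum appearing in any $\mc O_{[\partial^i]}$.

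For (c), fix generic $\la$. By Lemma~\ref{IrrM} we have $P_\la = M^\vee_{\la+2\omega}$, of length four with composition factors $L_{\la+2a\vare_1+2b\vare_2}$, $(a,b)\in\{0,1\}^2$. The central technical step is to show that $P_\la$ has Loewy length three with top $L_\la$, middle layer $L_{\la+2\vare_1}\oplus L_{\la+2\vare_2}$, and socle $L_{\la+2\omega}$. The top and socle are pinned down by~\eqref{topsoc}, \eqref{topsoc2} together with Lemma~\ref{IrrM}; that the two middle factors form a single semisimple layer should follow from the two-step $\mf g_\oa$-Verma filtration of $M^\vee_{\la+2\omega}$ (whose successive quotients are simple $\mf g_\oa$-modules for generic $\la$) combined with the Hom formula of Lemma~\ref{BGGlem}. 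Given this filtration, I would identify the Ext-quiver of $\mc O_{[\la]}$ with the quiver underlying $A^2$ via the parametrisation $[\la]\leftrightarrow\mZ^{\oplus 2}$, $\la+2(a\vare_1+b\vare_2)\leftrightarrow(a,b)$. The relations $x^{(i)}_{v+e_i}x^{(i)}_v=0$ follow from Loewy length three, and the commutativity relation $x^{(1)}_{v+e_2}x^{(2)}_v=x^{(2)}_{v+e_1}x^{(1)}_v$ follows from the fact that both composites are nonzero maps into the one-dimensional socle $L_{\la+2\omega+2v}$, hence proportional, with the scalar absorbable by rescaling arrows. This identifies $\End(\bigoplus_{\nu\in[\la]} P_\nu)^{\op}$ with $A^2$ independently of $\la$, proving that all generic blocks are equivalent. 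Koszulity of $A=A^1$ is immediate from the minimal projective resolution $\cdots\to P_{i+k}\to\cdots\to P_i\to L_i\to 0$ being linear in the grading by arrow length, and tensor stability of Koszulity yields Koszulity of $A^2 = A\otimes A$, and hence of $\mc O_{[\la]}$.

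The main obstacle is the determination of the radical filtration of $P_\la$, in particular showing that the two middle composition factors $L_{\la+2\vare_1}$ and $L_{\la+2\vare_2}$ form a single semisimple Loewy layer rather than lie in distinct layers, and pinning down the commutativity relation up to rescaling of arrows. Once this Loewy structure is in hand, the identification with $A^2$ and the deduction of Koszulity are formal.
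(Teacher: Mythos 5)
Your parts (a) and (b) are correct and essentially the paper's own argument: the equivalence $\mc O_{[\partial^1]}\cong\mc O_{[\partial^2]}$ is exactly Lemma~\ref{WtShf} applied with $c=1$, and the non-equivalences are detected, as in the paper, by composition lengths of indecomposable projectives from Corollary~\ref{ChProj} (the paper uses the maxima $4$, $18$, $12$; your parity observation that every $\la\in[\partial^1]$ has $\la_1-\la_2$ odd, hence $\ell(P_\la)\in\{5,7,8,11,12\}$, is a correct way to see that $18$ cannot occur there). Part (c) also follows the same strategy as the paper: determine the Loewy structure of the generic $P_\la$, identify the locally finite endomorphism algebra of a projective generator with the explicit grid quiver algebra independently of $\la$, and deduce Koszulity.

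However, the central technical step of (c) is left genuinely open in your proposal, and the route you sketch for it is not an argument. You need to know that the two middle composition factors $L_{\la+2\vare_1}$ and $L_{\la+2\vare_2}$ form a single semisimple layer, i.e.\ that $\mathrm{Ext}^1_{\mc O}(L_{\la+2\vare_1},L_{\la+2\vare_2})=0=\mathrm{Ext}^1_{\mc O}(L_{\la+2\vare_2},L_{\la+2\vare_1})$; the suggestion that this ``should follow from the two-step $\mf g_\oa$-Verma filtration of $M^\vee_{\la+2\omega}$ combined with Lemma~\ref{BGGlem}'' does not obviously do so, since Lemma~\ref{BGGlem} only computes Hom spaces from modules with ($\mf g$-)Verma flag into dual Verma modules and says nothing about extensions between the two middle simples, and a filtration of $\Res P_\la$ by $\fg_\oa$-Vermas does not by itself rule out a uniserial radical. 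The gap is closed in one line from a fact you already use: a nonzero $\mathrm{Ext}^1_{\mc O}(L_{\la+2\vare_1},L_{\la+2\vare_2})$ would force $[P_{\la+2\vare_1}:L_{\la+2\vare_2}]\neq 0$, contradicting \eqref{P2gen} applied at the weight $\la+2\vare_1$ (whose composition factors are $L_{\la+2\vare_1+2a\vare_1+2b\vare_2}$, $a,b\in\{0,1\}$); this is exactly how the paper argues. Two smaller points of precision in the identification with $A^2$: the relation $x^{(i)}_{v+e_i}x^{(i)}_v=0$ does not follow from Loewy length three alone, but from $[P_{\la+2v}:L_{\la+2v+4\vare_i}]=0$ (again \eqref{P2gen}), and the nonvanishing of the two length-two composites into the one-dimensional space $\Hom(P_{\la+2v+2\omega},P_{\la+2v})$ uses that the socle of each projective is simple (Lemma~\ref{IrrM} and \eqref{topsoc}), after which your rescaling argument for the commutation relation, and the dimension count showing there are no further relations, go through as claimed.
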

\begin{proof}
Parts (i) and (ii), together with Lemma \ref{WtShf} imply that there are 3 blocks up to equivalence.
	Lemma \ref{WtShf} also implies the equivalence in part (i) since
	$$\mc O_{[\partial^1]}\cong \mc O_{[\partial^1+\omega]} =\mc O_{[\partial^2]}.$$
   By Theorem~\ref{ThmBlock} and Corollary  \ref{ChProj}, we have  
	\begin{align*}
	&\text{max}\{\ell(P_\zeta)|~L_\zeta \in \mc O_{[\la]} \} =4,\\
	&\text{max}\{\ell(P_\zeta)|~L_\zeta \in \mc O_{[\partial^0]}\} =18, \\
	&\text{max}\{\ell(P_\zeta)|~L_\zeta \in \mc O_{[\partial^1]}\}   = 12,
	\end{align*}
	which proves all the non-equivalences.
	
	It remains to show $\mc O_{[\la]}\cong \mc O_{[\mu]}$ for $\lambda$ and $\mu$ as in part (ii). To see this, we first recall from Corollary \ref{ChProj} that~$$\text{ch}(\text{rad}P_\la) =   \text{ch}L_{\la + 2 \vare_1} +   \text{ch}L_{\la + 2 \vare_2}+  \text{ch}L_{\la + 2\omega}.$$
	We claim the radical filtration of~$P_\lambda$ satisfies
	$$\text{rad}P_\la/\text{rad}^2P_\la \cong L_{\la +2\vare_1} \oplus L_{\la +2\vare_2}\quad\mbox{and}\quad \text{rad}^2P_\la = \text{Soc}P_\la \cong L_{\la +2\omega}.$$
	    By Lemma~\ref{IrrM} and equation~\eqref{topsoc}, we have $\Soc P_\la=L_{\la+2\omega}$.
    As a consequence, it suffices to show that
    $$\text{Ext}^1_{\mc O}(L_{\la +2\vare_1}, L_{\la +2\vare_2})  = \text{Ext}^1_{\mc O}(L_{\la +2\vare_2}, L_{\la +2\vare_1})=0.$$
    Indeed, if $\text{Ext}^1_{\mc O}(L_{\la +2\vare_1}, L_{\la +2\vare_2})\neq 0$, then $[P_{\la +2\vare_1}:L_{\la +2\vare_2}]\neq 0$, which is contradicted by~\eqref{P2gen}. Similarly, we have $\text{Ext}^1_{\mc O}(L_{\la +2\vare_2}, L_{\la +2\vare_1})=0$. 
    
     It suffices to show that we have an equivalence between the respective categories of projective modules. Equivalently, it suffices to show that there is an isomorphism between locally finite endomorphism algebras $\text{End}^{\text{fin}}_{\mc O}(\bigoplus_{\gamma \sim \la} P_{\gamma}) $ and $ \text{End}^{\text{fin}}_{\mc O}(\bigoplus_{\gamma \sim \mu} P_{\gamma})$.      
     By a direct computation these algebras are isomorphic to the path algebra of the following quiver with vertices $\mZ\times\mZ$:
     \begin{displaymath}
    \xymatrix{
    &\vdots&\vdots&\vdots&\vdots&\\
       \cdots  &\bullet_{(-1,1)}\ar[r]^{x_{(-1,1)}}\ar[u]^{y_{(-1,1)}}&\bullet_{(0,1)}\ar[r]^{x_{(0,1)}}\ar[u]^{y_{(0,1)}}& \bullet_{(1,1)} \ar[r]^{x_{(1,1)}}\ar[u]^{y_{(1,1)}}&  \bullet_{(2,1)}\ar[r]^{x_{(2,1)}}\ar[u]^{y_{(2,1)}} & \cdots  \\
        \cdots &\bullet_{(-1,0)}\ar[r]^{x_{(-1,0)}}\ar[u]^{y_{(-1,0)}}&\bullet_{(0,0)}\ar[r]^{x_{(0,0)}}\ar[u]^{y_{(0,0)}}& \bullet_{(1,0)} \ar[r]^{x_{(1,0)}}\ar[u]^{y_{(1,0)}}&  \bullet_{(2,0)}\ar[r]^{x_{(2,0)}}\ar[u]^{y_{(2,0)}} & \cdots  \\
           \cdots &\bullet_{(-1,-1)}\ar[r]^{x_{(-1,-1)}}\ar[u]^{y_{(-1,-1)}}&\bullet_{(0,-1)}\ar[r]^{x_{(0,-1)}}\ar[u]^{y_{(0,-1)}}& \bullet_{(1,-1)} \ar[r]^{x_{(1,-1)}}\ar[u]^{y_{(1,-1)}}&  \bullet_{(2,-1)}\ar[r]^{x_{(2,-1)}}\ar[u]^{y_{(2,-1)}} & \cdots  \\
           &\vdots\ar[u]^{y_{(-1,-2)}}&\vdots\ar[u]^{y_{(0,-2)}}& \vdots\ar[u]^{y_{(1,-2)}}& \vdots\ar[u]^{y_{(2,-2)}}
        }
\end{displaymath}
and relations 
$$y_{v+(1,0)}x_{v} =x_{v+(0,1)}y_v\quad\mbox{and}\quad
    x_{v+(1,0)}x_v=0 =y_{v+(0,1)}y_v =0,\qquad\mbox{for all $v\in \mZ\times\mZ$}.$$
  In particular these algebras do not depend on the specific $\lambda\in\fh^\ast\backslash\Upsilon$ and are Koszul.  
\end{proof}

\begin{rem} 
	In~\cite{CMW}, Cheng, Mazorchuk and Wang proved that non-integral blocks of category~$\cO$ for $\mathfrak{gl}(m|n)$ are equivalent to integral blocks in~$\cO$ for direct sums of smaller general linear superalgebras. The results in this 
	section similarly imply that non-integral~$\mf{pe}(2)$-blocks are equivalent to integral blocks (in~$\cO$ or equivalently $\cF$) for $\mf{pe}(1)\oplus \mf{pe}(1)$.
However, there is no realisation of~$\mf{pe}(1)\oplus \mf{pe}(1)$ as a subalgebra $\fk\subset\mathfrak{pe}(2)$ for which we have a Borel subalgebra $\fb\subset\mathfrak{pe}(2)$ such that $\fk+\fb$ constitutes a subalgebra. It is thus not possible to consider parabolic induction as in~\cite{CMW} to prove the equivalence directly.	\end{rem}

\subsection{The primitive spectrum}

\begin{lem} \label{pe2TwIrr} Let $\mf g := \mf{pe}(2)$ and $T:=T_s$ be the twisting functor with the (unique) simple reflection $s$ of~$W$.
	Then we have the following character formulas ($\omega = \vare_1 + \vare_2$):
	\begin{align}
	&T L_\la = L_{s\cdot \la}, ~\text{ if $\la_1-\la_2 \notin \Z$}, \label{pe2Tw1} \\
	&T L_\la = 0, ~\text{ if $\la_1-\la_2 \in \Z_{\geq 0}$},\label{pe2Tw2} \\
	&T L_\la = L_\la, ~\text{ if $\la_2 = \la_1+1$},\label{pe2Tw3} \\
	&\emph{ch}T L_{\la} = \emph{ch}L_{\la}+\emph{ch}L_{s\cdot \la}+ \emph{ch}L_{s\cdot \la -\omega}, ~\text{ if $\la_2= \la_1+2$}, \label{pe2Tw4}\\
	&\emph{ch}T L_{\la} = \emph{ch}L_{\la}+\emph{ch}L_{s\cdot \la}, ~\text{ if $\la_2> \la_1+2$}. \label{pe2Tw5}
	\end{align}
\end{lem}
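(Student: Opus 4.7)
The key technical input is the character identity $\text{ch}(TM_\la) = \text{ch}(M_{s\cdot\la})$ for every $\la \in \fh^*$. Since the Ore localisation defining $T$ is performed at a root vector $X \in (\fg_{\oa})_{-\alpha}$ and the twisting automorphism $\varphi_\alpha$ acts on all of $\fg$ compatibly with its restriction to $\fg_\oa$, one has a natural isomorphism $\Res \circ T \cong T \circ \Res$ of functors $\cO \to \cO^{\oa}$. Combining this with the classical identity $T_\alpha M^{\oa}_\mu \cong M^{\oa}_{s\cdot\mu}$ on $\fg_{\oa}$-Verma modules (which already features in the proof of Theorem~\ref{TsGs}) and with the PBW identity $\text{ch}\Res M_\la = (1+e^{-\omega})\text{ch} M^{\oa}_\la$, one obtains the character identity for $TM_\la$.

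With this key fact, cases \eqref{pe2Tw1}, \eqref{pe2Tw3}, \eqref{pe2Tw4} and \eqref{pe2Tw5} reduce to bookkeeping. Each hypothesis forces $\la_1 - \la_2 \notin \mZ_{\geq 0}$, so Lemma~\ref{Lem::ComVer}(i) yields $L_\la = M_\la$ and hence $\text{ch}(TL_\la) = \text{ch}(M_{s\cdot\la})$. I then expand the right-hand side using the appropriate part of Lemma~\ref{Lem::ComVer}, distinguishing by the value of $(s\cdot\la)_1 - (s\cdot\la)_2 = \la_2 - \la_1 - 2$ (namely: non-integral, positive, zero, or negative); this directly recovers the stated characters. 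In cases \eqref{pe2Tw1} and \eqref{pe2Tw3} the resulting character is that of a single simple module with multiplicity one, so the nonzero module $TL_\la \in \cO$ must be isomorphic to that simple, upgrading character equality to an isomorphism of modules.

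For case \eqref{pe2Tw2} I treat two subcases directly. When $\la_1 = \la_2$, Corollary~\ref{pe2char}(iii) gives $\dim L_\la = 1$, so $X$ acts as zero on $L_\la$ and $L_\la[X^{-1}] = 0$, whence $TL_\la = 0$ by the very definition of the twisting functor. When $\la_1 - \la_2 \in \mZ_{>0}$, Lemma~\ref{IrrM} identifies $L_\la$ with the Kac module $K_\la = \Ind_{\fg_{\geq 0}}^{\fg} L^{\oa}_\la$, so the PBW decomposition yields $\Res L_\la \cong L^{\oa}_\la \oplus L^{\oa}_{\la-\omega}$ as $\fg_\oa$-modules; both summands are finite-dimensional $\mathfrak{gl}_2$-modules on which $X$ acts nilpotently, so $TL^{\oa}_\la = 0 = TL^{\oa}_{\la-\omega}$, and $\Res\circ T \cong T\circ \Res$ forces $TL_\la = 0$. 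I expect the main obstacle to be establishing the Verma character identity $\text{ch}(TM_\la) = \text{ch}(M_{s\cdot\la})$; once this is in place, every subsequent step is a short computation using Lemmas~\ref{Lem::ComVer} and~\ref{IrrM}.
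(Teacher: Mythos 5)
Your proposal is correct and follows essentially the same route as the paper: the key identity $\mathrm{ch}\,TM_\la=\mathrm{ch}\,M_{s\cdot\la}$ obtained from $\Res\circ T\cong T\circ\Res$ together with the classical behaviour of twisting on $\fg_{\oa}$-Verma modules, then $L_\la=M_\la$ in the relevant cases and the decompositions of $\mathrm{ch}\,M_{s\cdot\la}$ from Lemma~\ref{Lem::ComVer}. The only divergence is minor: for \eqref{pe2Tw2} the paper simply invokes \cite[Theorem 5.12(i)]{CoMa14} (twisting functors annihilate finite-dimensional modules), whereas you rederive this vanishing directly from the one-dimensional, respectively Kac-module, description of $L_\la$; both arguments are valid.
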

\begin{proof}
	Recall that~$\Res M_\mu = M_\mu^\oo \oplus M_{\mu- \omega}^\oo$, for all $\mu \in \h^*$. Then it follows from $T^\oo \circ \Res = \Res \circ T$ and $\text{ch}T^\oo M_\mu^\oo = \text{ch}M^\oo_{s\cdot \mu}$ for all $\mu \in \h^*$ that~$$\text{ch}T M_\la =\text{ch}T^\oo \Res  M_\la  = \text{ch}M_{s\cdot \la} = \text{ch}M_{(\la_2-1)\vare_1 + (\la_1 +1)\vare_2}.$$

	We first consider \eqref{pe2Tw1}, that is, assume that~$\la_1 - \la_2 \notin \Z$. In this case we have $M_\la = L_\la$ and $M_{s\cdot \la} = L_{s\cdot \la} $ as desired. Also, \eqref{pe2Tw2} follows from \cite[Theorem 5.12(i)]{CoMa14} since  $L_\la$ is finite-dimensional. 
	
	Now consider \eqref{pe2Tw3}, that is, assume that~$\la_2 = \la_1+1$. Observe that~$s\cdot \la  =\la$ and so $$\text{ch}T M_\la = \text{ch}M_{s \cdot \la} = \text{ch}M_{\la_1\vare_1+ (\la_1+1)\vare_2} = \text{ch}L_{\la}.$$
	
	Then consider \eqref{pe2Tw4}. That is, $\la_2= \la_1+2$ and note that~$s\cdot\la =  (\la_2 -1)\vare_1+ (\la_1+1)\vare_2$ with~$(\la_2 -1)= (\la_1+1)$. Then $\text{ch}M_{s\cdot \la} = \text{ch}L_\la + \text{ch}L_{s\cdot \la} + \text{ch}L_{s\cdot \la -\omega}$. We have thus proved \eqref{pe2Tw4}. 
	
	Finally, we consider \eqref{pe2Tw5}, that is, assume that~$\la_2> \la_1+2$. In this case we have $M_\la =L_\la$. 
	Therefore we have $\text{ch}TL_\la = \text{ch}M_{s\cdot \la} = \text{ch}L_{s\cdot \la} + \text{ch}L_{\la}$. 
\end{proof}

Let $\mu\in \h^*$. It follows from equation \eqref{dualchar} that $\text{ch}D_\sigma M_\mu =\text{ch}M_{-s\mu+\omega}$. Therefore by Corollary \ref{pe2char} we have the following formulas,
$$D_\sigma L_\mu  \;=\;\begin{cases}
L_{-s\mu}&\mbox{ if $\mu_1=\mu_2$,}\\
L_{-s\mu+\omega}&\mbox{ otherwise.}
\end{cases}$$

By Lemma \ref{pe2TwIrr} and Corollary~\ref{TwPI}, we have the following description of the primitive spectrum of~$\mf{pe}(2)$.

\begin{cor} For $a,b\in\mC$, we set $J(a,b):=\Ann_UL(a\varepsilon_1+b\varepsilon_2)$. We have the following connected components of the inclusion order on $\{J(a,b)\,|\, (a,b)\in\mC\times\mC\}$:
\begin{enumerate}[(i)]
\item For each $\{a,b\}\subset\mC$ with $a-b\not\in\mZ$, we have $J(a,b)=J(b-1,a+1)$.
\item For each $a\in\mC$, we have the singleton $\{J(a,a+1)\}$.
\item For each $a\in\mC$, the set $\{J(a+i,a+i), J(a+i,a+i+2)\,|\, i\in\mZ\}$ is a connected component with Hasse diagram
$$\xymatrix{
	&J(a-1, a-1)\ar@{-}[ld] \ar@{-}[rd]  & &J (a,a) \ar@{-}[rd] && \cdots\\
	\cdots&&J (a-1, a+1) \ar@{-}[ru] && J(a,a+2) \ar@{-}[ru] & }$$
	\item For each $a\in\mC$ and $k\in \mZ_{>0}$, we have
	$$J(a-1,a+k+1)\subsetneq J(a+k,a).$$
\end{enumerate}
\end{cor}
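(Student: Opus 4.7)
The plan is to derive all four statements directly from Corollary~\ref{TwPI} combined with the $D_\sigma$-formulas stated just above the corollary and the explicit $T$-action from Lemma~\ref{pe2TwIrr}. Because $W=S_2$ has a single simple reflection $s$, the only twisting functor to track is $T=T_s$, so Corollary~\ref{TwPI} reduces an inclusion $J(a,b)\subseteq J(c,d)$ to the purely combinatorial statement that $D_\sigma L_{c\varepsilon_1+d\varepsilon_2}$ occurs as a subquotient of some iterate $T^{\ell}D_\sigma L_{a\varepsilon_1+b\varepsilon_2}$. The $D_\sigma$-formulas read $D_\sigma L_{a\varepsilon_1+b\varepsilon_2}=L_{(1-b)\varepsilon_1+(1-a)\varepsilon_2}$ when $a\neq b$, and $D_\sigma L_{a\omega}=L_{-a\omega}$. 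Combined with the fact that $T L_\nu=0$ exactly when $\nu_1-\nu_2\in\mZ_{\geq 0}$ (case \eqref{pe2Tw2}), this limits the $T$-orbit of each simple quite sharply.

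For (i), the non-integrality $a-b\notin\mZ$ puts us in case \eqref{pe2Tw1}, and an easy computation of the dot-action gives $TD_\sigma L_{a,b}=D_\sigma L_{b-1,a+1}$ with $T^2D_\sigma L_{a,b}=D_\sigma L_{a,b}$, so the $T$-orbit is $\{D_\sigma L_{a,b},D_\sigma L_{b-1,a+1}\}$ and Corollary~\ref{TwPI} yields the equality of ideals. For (ii), the weight $(a,a+1)$ produces $D_\sigma L_{a,a+1}=L_{-a\varepsilon_1+(1-a)\varepsilon_2}$, a simple of type \eqref{pe2Tw3}, fixed by $T$. To verify that the singleton is genuine, one inspects \eqref{pe2Tw1}--\eqref{pe2Tw5} to see that any simple $L_\nu$ with $\nu_2-\nu_1=1$ can appear as a subquotient of $T^\ell X$ only if $X$ already has such a composition factor; since $D_\sigma L_{c,d}$ has this property precisely when $(c,d)=(a,a+1)$, no other $J(c,d)$ is comparable to $J(a,a+1)$.

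For (iii), $D_\sigma L_{a+i,a+i+2}=L_{(-1-a-i)\varepsilon_1+(1-a-i)\varepsilon_2}$ falls into case \eqref{pe2Tw4}; the three subquotients of $TD_\sigma L_{a+i,a+i+2}$ are $L_\nu$, $L_{s\cdot\nu}=L_{-(a+i)\omega}$ and $L_{s\cdot\nu-\omega}=L_{-(a+i+1)\omega}$, which upon reading backwards through $D_\sigma$ correspond to $L_{a+i,a+i+2}$, $L_{(a+i)\omega}$ and $L_{(a+i+1)\omega}$. This yields the two inclusions depicted in the Hasse diagram. On the other hand, $D_\sigma L_{(a+i)\omega}=L_{-(a+i)\omega}$ has $\nu_1-\nu_2=0\in\mZ_{\geq 0}$, hence $TD_\sigma L_{(a+i)\omega}=0$ by \eqref{pe2Tw2}; therefore $J(a+i,a+i)$ is $\subseteq$-maximal in the component, all inclusions are strict, and no hidden relations exist between distinct $J(a+i,a+i)$ or between distinct $J(a+i,a+i+2)$. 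For (iv), the weight $\lambda=(a-1)\varepsilon_1+(a+k+1)\varepsilon_2$ gives $D_\sigma L_\lambda=L_{-(a+k)\varepsilon_1+(2-a)\varepsilon_2}$, with $\nu_2-\nu_1=k+2\geq 3$, so case \eqref{pe2Tw5} applies: the subquotients of $TD_\sigma L_\lambda$ are the original simple and $L_{s\cdot\nu}=L_{(1-a)\varepsilon_1+(1-a-k)\varepsilon_2}=D_\sigma L_{a+k,a}$, proving $J(a-1,a+k+1)\subseteq J(a+k,a)$. Strictness follows from $TD_\sigma L_{a+k,a}=0$, again by \eqref{pe2Tw2}, so $D_\sigma L_{a-1,a+k+1}$ cannot appear in any $T^\ell D_\sigma L_{a+k,a}$.

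The principal difficulty lies not in establishing the listed inclusions (each of which drops out of a single application of the appropriate case of Lemma~\ref{pe2TwIrr}) but in ruling out additional relations; this is systematic once one observes that the only mechanism creating inclusions is the appearance of new subquotients under $T$, and that the vanishing clause \eqref{pe2Tw2} effectively terminates propagation along any simple reached via $D_\sigma$ from a weight with $\lambda_1\geq\lambda_2$.
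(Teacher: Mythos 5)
Your proposal is correct and takes essentially the same route as the paper: the paper obtains this corollary precisely by combining the $D_\sigma$-formulas for simples with Lemma~\ref{pe2TwIrr} and Corollary~\ref{TwPI}, which is exactly the computation you carry out (including the correct use of right-exactness of $T$ and the vanishing case \eqref{pe2Tw2} to rule out extra inclusions). The only caveat, shared with the paper's own derivation, is that Corollary~\ref{TwPI} is stated for weights in $\Upsilon$, so its application to the non-integral weights occurring here is left implicit in both treatments.
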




\end{document}